\newtheorem{theorem}{Theorem}[section]
\newtheorem{lemma}[theorem]{Lemma}
\newtheorem{corollary}[theorem]{Corollary}
\newtheorem{condition}[theorem]{Condition}
\newtheorem{proposition}[theorem]{Proposition}
\newtheorem{remark}[theorem]{Remark}
\newtheorem{Question}[theorem]{Question}
\newcommand{\ncom}{\newcommand}
\ncom{\lrar}{\longrightarrow}
\ncom{\rar}{\rightarrow}
\ncom{\ov}{\overline}
\ncom{\m}{\mbox}
\ncom{\sta}{\stackrel}
\ncom{\comx}{{\mathbb C}}
\ncom{\A}{{\mathbb A}}
\ncom{\Z}{{\mathbb Z}}
\ncom{\Q}{{\mathbb Q}}
\ncom{\R}{{\mathbb R}}
\ncom{\G}{{\mathbb G}}
\ncom{\al}{\alpha}
\ncom{\p}{{\mathbb P}}
\ncom{\E}{{\mathbb E}}
\ncom{\N}{{\mathbb N}}
\ncom{\K}{{\mathbb K}}
\ncom{\X}{{\mathbb X}}
\ncom{\f}{\frac}
\ncom{\cA}{{\mathcal A}}
\ncom{\cB}{{\mathcal B}}
\ncom{\cD}{{\mathcal D}}
\ncom{\cX}{{\mathcal X}}
\ncom{\cO}{{\mathcal O}}
\ncom{\cW}{{\mathcal W}}
\ncom{\cL}{{\mathcal L}}
\ncom{\cP}{{\mathcal P}}
\ncom{\cH}{{\mathcal H}}
\ncom{\cS}{{\mathcal S}}
\ncom{\cM}{{\mathcal M}}
\ncom{\cC}{{\mathcal C}}
\ncom{\cT}{{\mathcal T}}
\ncom{\cF}{{\mathcal F}}
\ncom{\cN}{{\mathcal N}}
\ncom{\cJ}{{\mathcal J}}
\ncom{\cK}{{\mathcal K}}
\ncom{\cV}{{\mathcal V}}
\ncom{\cZ}{{\mathcal Z}}
\ncom{\cU}{{\mathcal U}}
\ncom{\cSU}{{\mathcal S \mathcal U}}
\ncom{\cG}{{\mathcal G}}
\ncom{\cQ}{{\mathcal Q}}
\ncom{\cR}{{\mathcal R}}
\ncom{\cY}{{\mathcal Y}}
\ncom{\cE}{{\mathcal E}}
\ncom{\what}{\widehat}
\ncom{\delbar}{\overline{\partial}}
\ncom{\eop}{{\hfill $\Box$}}
\begin{document}
\baselineskip=16pt

\title{Regulators of canonical extensions are torsion: the smooth divisor case}


\author[J. N. Iyer]{Jaya NN  Iyer}

\address{School of Mathematics, Institute for Advanced Study, 1 Einstein Drive, Princeton NJ 08540 USA.}
\email{jniyer@ias.edu}

\address{The Institute of Mathematical Sciences, CIT
Campus, Taramani, Chennai 600113, India}
\email{jniyer@imsc.res.in}

\author[C. T.  Simpson]{Carlos T  Simpson}
\address{CNRS, Laboratoire J.-A.Dieudonn\'e, Universit\'e de Nice--Sophia Antipolis,
Parc Valrose, 06108 Nice Cedex 02, France}
\email{carlos@math.unice.fr}

\footnotetext{Mathematics Classification Number: 14C25, 14D05, 14D20, 14D21 }
\footnotetext{Keywords: Logarithmic Connections, Deligne cohomology, Secondary classes.}

\begin{abstract}
In this paper, we prove a generalization of Reznikov's theorem which says that 
the Chern-Simons classes and in particular the Deligne Chern classes (in degrees $>\,1$) are torsion, of a 
flat vector bundle on a smooth complex projective variety. We consider the case of a smooth quasi--projective variety 
with an irreducible smooth divisor at infinity. We define
the Chern-Simons classes of the Deligne's \textit{canonical extension} of a flat vector bundle with unipotent monodromy 
at infinity, which lift the Deligne Chern classes and prove that these classes are torsion. 
\end{abstract}
\maketitle

\setcounter{tocdepth}{1}
\tableofcontents


\section{Introduction}

Chern and Simons \cite{Chn-Si} and Cheeger \cite{Ch-Si} introduced a theory of differential cohomology on smooth manifolds. 
For vector bundles with connection, they defined classes or the secondary invariants in the ring of differential characters. 
These classes lift the closed form defined by the curvature form of the given connection.
In particular when the connection is flat, the secondary invariants yield classes in the cohomology with $\R/\Z$-coefficients. 
These are the Chern-Simons classes of flat connections. 

The following question was raised in \cite[p.70-71]{Ch-Si} (see also \cite[p.104]{Bl}) by Cheeger and Simons:

\begin{Question}\label{Cheeger-Simons}
Suppose $X$ is a smooth manifold and $(E,\nabla)$ is a flat connection on $X$.
Are the Chern-Simons classes $\what{c_i}(E,\nabla)$ of $(E,\nabla)$ torsion in $H^{2i-1}(X,\R/\Z)$, for $i\geq 2 $  ?
\end{Question} 

Suppose $X$ is a smooth projective variety defined over the complex numbers. Let $(E,\nabla)$ be a vector bundle with a flat 
connection $\nabla$. 
S. Bloch \cite{Bl} showed that for a unitary connection the Chern-Simons classes are mapped to the
Chern classes of $E$ in the Deligne cohomology. The above Question \ref{Cheeger-Simons} together with his observation led him to 
conjecture that the Chern classes of flat bundles are torsion in the Deligne cohomology of $X$, in degrees at least two.

A. Beilinson defined universal secondary classes and H. Esnault \cite{Es} constructed secondary classes using a modified splitting 
principle in the $\comx/\Z$-cohomology. These classes are shown to be liftings of the Chern classes in the Deligne cohomology. 
These classes also have an interpretation in terms of differential characters, and the original $\R / \Z$ classes of
Chern-Simons are obtained by the projection $\comx / \Z \rightarrow \R / \Z$. The imaginary parts of the 
$\comx / \Z$ classes are Borel's volume regulators $Vol_{2p-1}(E,\nabla )\in H^{2p-1}(X,\R )$. All the constructions give the same class in odd degrees, called as the secondary classes on X (see \cite{DHZ}, \cite{Es2} for a discussion on this).

Reznikov \cite{Re}, \cite{Re2} showed 
 that the secondary classes of $(E,\nabla)$ are torsion in the cohomology $H^{2i-1}(X,\comx/\Z )$ 
of $X$, when $i\geq 2$. In particular, he proved the above mentioned conjecture of Bloch.

Our aim here is to extend this result when $X$ is smooth and quasi--projective with an irreducible smooth divisor $D$ at infinity. We consider a flat bundle on $X$ which has unipotent monodromy at infinity. We define secondary classes on $X$ (extending the classes on $X-D$ of the flat connection) and which lift the Deligne Chern classes, and show that these classes are torsion.

Our main theorem is
\begin{theorem}
Suppose $X$ is a smooth quasi--projective variety defined over $\comx$. Let $(E,\nabla)$ be a flat connection on $U:=X-D$ associated to a representation $\rho:\pi_1(U)\rar GL_r(\comx)$. Assume that $D$ is a smooth and irreducible divisor and $(\ov{E},\ov{\nabla})$ be the Deligne canonical extension on $X$ with unipotent monodromy around $D$. Then 
the secondary classes 
$$
\what{c_p}(\rho / X )\,\in\,H^{2p-1}(\ov X , \comx / \Z )
$$
of $(\ov{E},\ov{\nabla})$ are torsion, for $p>1$. If, furthermore, $X$ is projective then 
the Chern classes of $\ov E$ are torsion in
the Deligne cohomology of $X$, in degrees $>1$.
\end{theorem}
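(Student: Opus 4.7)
The plan is to follow the deformation strategy that Reznikov used in the projective case, now adapted to the canonical extension on the compactification $X$. The three ingredients are: the definition and deformation-invariance (modulo torsion) of the secondary classes $\what{c_p}(\rho/X)$ on $X$; Simpson's non-abelian Hodge correspondence for the tame/unipotent quasi--projective situation; and the vanishing argument for complex variations of Hodge structure (VHS).

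First, I would establish that $\what{c_p}(\rho/X) \in H^{2p-1}(X,\comx/\Z)$ is locally constant modulo torsion in continuous families of representations $\rho : \pi_1(U) \rar GL_r(\comx)$ whose monodromy around $D$ remains unipotent. Within such a family the canonical extensions assemble into a holomorphic family of pairs $(\ov E, \ov \nabla)$ with logarithmic connection along $D$, so the Chern--Simons construction on $X$ yields a continuously varying class in $H^{2p-1}(X,\comx/\Z)$. Since the integer-coefficient cohomology sits discretely inside the complex one, continuous variation forces the class to be constant modulo torsion.

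Second, I would invoke Simpson's correspondence between flat bundles on $U$ with unipotent monodromy at $D$ and parabolic Higgs bundles on $(X,D)$ with trivial parabolic weights. The associated moduli space carries a natural $\comx^*$-action whose fixed points are systems of Hodge bundles, i.e.\ complex variations of Hodge structure with unipotent local monodromies. Flowing $(\ov E,\ov \nabla)$ along its $\comx^*$-orbit to a Hodge fixed point and applying Step 1 reduces the problem to the case where $(\ov E, \ov \nabla)$ is the canonical extension of a complex VHS on $U$. For such a VHS one then follows Reznikov's Hodge-theoretic argument: the Hodge filtration and its Higgs-like decomposition force the class to be expressible through an integral class (real part) together with a vanishing Borel volume regulator (imaginary part), yielding torsion. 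The final statement about Deligne cohomology when $X$ is projective follows from the standard exact sequence identifying $\what{c_p}(\rho/X)$ with the obstruction to lifting the topological Chern class of $\ov E$ to Deligne cohomology.

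The principal obstacle is Step 1: establishing the deformation-invariance of the secondary class on the compactification $X$, rather than on the open part $U$. Since $\ov \nabla$ is logarithmic (not flat) and the target cohomology sits on $X$, the usual Chern--Simons homotopy argument must be carefully adapted to logarithmic connections, and one must verify that fixing the \emph{unipotent type} of the residue at $D$ during a family suffices to keep the secondary class constant. This compatibility between the local unipotent structure at $D$ and the global Chern--Simons transgression on $X$ is the heart of the argument, and only once it is in place can the rigidity of the $\comx^*$-deformation to a Hodge fixed point be leveraged to conclude.
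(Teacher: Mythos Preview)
Your outline has the right overall shape---rigidity, deformation to a VHS, vanishing of the volume---but it is missing the arithmetic step that actually produces the torsion conclusion, and your argument for rigidity is not valid as stated.

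On rigidity: you argue that a continuously varying class in $H^{2p-1}(X,\comx/\Z)$ must be constant modulo torsion because ``the integer-coefficient cohomology sits discretely inside the complex one.'' But $H^{2p-1}(X,\comx/\Z)$ contains the complex torus $H^{2p-1}(X,\comx)/H^{2p-1}(X,\Z)$, which is connected and divisible; a continuous family can move freely there. The paper proves \emph{exact} rigidity (not just modulo torsion) by a completely different mechanism: it builds a patched $\cC^\infty$ connection $\nabla^{\#}$ on $\ov E$ over $X$ whose curvature is strictly upper-triangular with respect to a local filtration, and then applies the Cheeger--Simons variational formula $\frac{d}{dt}\what{c_p}=p\,\cP(\nabla'_t,\Omega_t,\ldots,\Omega_t)$, which vanishes because every term is a trace of a product of upper-triangular matrices with at least one strictly upper-triangular factor. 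No continuity or discreteness argument will substitute for this computation.

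On the conclusion: even if you grant rigidity and deform $\rho$ to a complex VHS, the Hodge-theoretic argument only kills the \emph{imaginary part} $Vol_{2p-1}(\rho/X)\in H^{2p-1}(X,\R)$. That leaves the class in $H^{2p-1}(X,\R/\Z)$, which is still a torus and not torsion in general. The paper's proof---following Reznikov's original strategy---inserts an essential arithmetic step you have omitted: after rigidity one may take $\rho$ defined over a number field $F$, and the extended regulator is then the pullback of a universal class along a map $X\to BGL(F)^+_{\rm def}$ constructed via a $K$-theoretic deformation-patching. For \emph{every} embedding $\sigma:F\hookrightarrow\comx$ one deforms $\rho^{\sigma}$ to a VHS (Mochizuki) and concludes $Vol_{2p-1}(\rho^{\sigma}/X)=0$. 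Borel's computation of the rational cohomology of $BSL(F)^+$ says these volume classes, as $\sigma$ ranges over all embeddings, generate the real cohomology; hence the classifying map is zero on rational homology, and only then does the full $\comx/\Z$ class become torsion. Deforming a single complex representation to a single VHS cannot reach this conclusion.
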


What we do here can easily be generalized to the case when $D$ is smooth and has several
disjoint irreducible components. On the other hand, the generalization to a normal crossings divisor presents
significant new difficulties which we don't yet know how to handle, so this will be left for the future. 

The main constructions in this paper are as follows.
We will consider the following situation. Suppose $X$ is a smooth manifold, and 
$D\subset X$ is a connected smooth closed subset of real codimension $2$. Let $U:= X-D$ and suppose we can choose a reasonable
tubular neighborhood $B$ of $D$. 
Let $B^{\ast} := B\cap U = B-D$.
It follows that $\pi _1(B^{\ast})\rightarrow \pi _1(B)$ is surjective.
 The diagram
\begin{equation}
\label{pushout}
\begin{array}{ccc}
B^{\ast} & \rightarrow & B \\
\downarrow & & \downarrow \\
U & \rightarrow & X
\end{array}
\end{equation}
is a homotopy pushout diagram. Note also that $B$ retracts to $D$, and $B^{\ast}$ has a tubular structure: 
$$
B^{\ast} \cong S \times (0,1)
$$
where $S\cong \partial B$ is a circle bundle over $D$.

We say that $(X,D)$ is {\em complex algebraic} if $X$ is a smooth complex quasiprojective variety and $D$ an irreducible smooth divisor. 

Suppose we are given a representation $\rho : \pi _1(U)\rightarrow GL_r(\comx )$, corresponding to a local system $L$ over $U$,
or equivalently to a vector bundle with flat connection $(E,\nabla )$. Let $\gamma$ be a loop going out from the basepoint to a point near $D$,
once around, and back. Then $\pi _1(B)$ is obtained from $\pi _1(B^{\ast})$ by adding the relation $\gamma \sim 1$. 
We assume that {\em the monodromy of $\rho$ at infinity is unipotent}, by which we mean that $\rho (\gamma )$ should be unipotent.
The logarithm is a nilpotent transformation
$$
N:= \log \rho (\gamma ) := 
(\rho (\gamma )-I) - \f{1}{2}(\rho (\gamma ) -I)^2 + \f{1}{3}(\rho (\gamma )- I)^3-... ,
$$
where the series stops after a finite number of terms. 

In this situation, there is a canonical and natural way to extend the bundle $E$ to a bundle $\ov E$ over $X$, known as the 
{\em Deligne canonical extension} \cite{De}. The connection $\nabla$ extends
to a connection $\ov{\nabla}$ whose singular terms involved look locally like $Nd\theta$ where $\theta$ is the angular coordinate around $D$. 
In an appropriate frame the singularities of $\ov \nabla $ are only in the strict upper triangular region of the
connection matrix.  In the complex algebraic case, $(E,\nabla )$ are holomorphic, and indeed algebraic with algebraic structure
uniquely determined by the requirement that $\nabla$ have regular singularities. The extended bundle $\ov E$ is
algebraic on $X$ and
$\ov\nabla$ becomes a logarithmic connection \cite{De}. 

We will define {\em extended regulator classes} 
$$
\what{c}_p(\rho / X) \in H^{2p-1}(X, \comx / \Z )
$$
which restrict to the usual regulator classes on $U$. Their imaginary parts define {\em extended volume regulators}
which we write as $Vol_{2p-1}(\rho / X)\in H^{2p-1}(X, \R )$. 

The technique for defining the extended regulator classes is to construct a {\em patched connection} $\nabla ^{\#}$ over $X$.
This will be a smooth connection, however it is not flat. Still, the curvature comes from the singularities of 
$\ov \nabla$ which have been smoothed out, so the curvature is upper-triangular. In particular, the Chern forms for 
$\nabla ^{\#}$ are still identically zero. The Cheeger-Simons theory of differential characters provides a
class of $\nabla ^{\#}$ in the group of differential characters, mapping to the group of closed forms. Since the image,
which is the Chern form, vanishes, the differential character lies in the kernel of this map which is exactly
$H^{2p-1}(X, \comx / \Z )$ \cite[Cor. 2.4]{Ch-Si}. This is the construction of the regulator class.

The proof of Dupont-Hain-Zucker that the regulator class lifts the Deligne Chern class, goes through word for word here
to show that this extended regulator class lifts the Deligne Chern class of the canonical extension $\overline{E}$
in the complex algebraic case. For this part, we need $X$ projective. 

We also give a different construction of the regulator classes, using the deformation theorem in $K$-theory.
The filtration which we will use to define the patched connection, also leads to a 
polynomial deformation on $B^{\ast}$ between
the representation $\rho$ and its associated-graded. Then, using the fact that $BGL(F[t])^+$ is homotopy-equivalent
to $BGL(F)^+$ and the fact that the square \eqref{pushout} is a homotopy pushout, this 
allows us to construct a map from $X$ 
to $BGL(F)^+$ and hence pull back the universal regulator classes. 
Corollary \ref{patchingsame} below says that these are the same as the extended
regulators defined by the patched connection.  
On the other hand, the counterpart of the deformation
construction in hermitian $K$-theory
allows us to conclude that the extended volume regulator is zero whenever $\rho$ 
underlies a complex variation of Hodge structure
in the complex algebraic case. 

A rigidity statement for the patched connections is discussed and proved in more generality in  \S \ref{sec-rigidity}.
All of the ingredients of Reznikov's original proof \cite{Re2} are now present for the extended classes,
including Mochizuki's theorem that any representation can be deformed to a complex variation of Hodge structure
\cite{Mochizuki}. Thus we show the generalization of Reznikov's result.

{\Small
\textbf{Acknowledgements}: We thank P. Deligne for having useful discussions. His suggestion to consider a glueing construction of the secondary classes (see \S \ref{logarithmicconnections}) and his letter \cite{De3}, motivated some of the main constructions, and we are thankful to him.
We also thank H. Esnault for explaining some of her constructions in \cite{Es}.
  The first named author is supported by the National Science Foundation (NSF) under agreement No. DMS-0111298. 
}

\section{Idea for the construction of secondary classes}


We begin by recalling the differential cohomology introduced by Chern, Cheeger and Simons \cite{Ch-Si},\cite{Chn-Si}. 
Since we want to look at logarithmic connections, we consider these cohomologies 
on complex analytic varieties and on their smooth compactifications. Our aim is to define secondary
classes in the $\comx/\Z$-cohomology for logarithmic connections which have unipotent monodromy along a smooth
boundary divisor. A glueing construction was suggested by Deligne, which uses glueing of secondary 
classes on the open variety and on a tubular neighbourhood of the boundary divisor. In 
\S \ref{patchedconnection} this will be made precise using a patched connection. 

Let $X$ be a nonsingular variety defined over the complex numbers. 
In the following discussion we will
interchangeably use the notation $X$ for the algebraic variety or the underlying complex analytic space.


\subsection{Analytic differential characters on $X$ \cite{Ch-Si}}\label{diff.char}

Let $S_k(X)$ denote the group of $k$-dimensional smooth singular chains on $X$, with integer coefficients. 
Let $Z_k(X)$ denote the subgroup of cycles.  Let us denote 
$$
S^\bullet(X,\Z):=\m{Hom}_\Z(S_\bullet(X),\Z)
$$
the complex of $\Z$ -valued smooth singular cochains, whose boundary operator is denoted by $\delta$.
The group of smooth diffferential $k$-forms on $X$ with complex coefficients is denoted by $A^k(X)$
and the subgroup of closed forms by $A^k_{cl}(X)$.
Then $A^\bullet(X)$ is canonically embedded in $S^\bullet(X)$, by integrating forms against the smooth
singular chains. In fact, we have an embedding
$$
i_\Z:A^\bullet(X)\hookrightarrow S^\bullet(X,\comx/\Z).
$$

The group of differential characters of degree $k$ is defined as
$$
\widehat{H^{k}}(X,\comx/\Z):=\{(f,\al)\in \m{Hom}_\Z(Z_{k-1}(X),\comx/\Z) 
\oplus A^k(X): \delta(f)=i_\Z(\al) \m{ and } d\al=0 \}.
$$
There is a canonical and functorial exact sequence:
\begin{equation}\label{diffeqn}
0\lrar H^{k-1}(X,\comx/\Z)\lrar \widehat{H^{k}}(X,\comx/\Z)\lrar A^k_{cl}(X,\Z)\lrar 0.
\end{equation}
Here $A^k_{cl}(X,\Z):=\m{ker}(A^k_{cl}(X)\lrar H^k(X,\comx/\Z ))$.
Similarly, one defines the group of differential characters $\what{H^{k}}(X,\R/\Z)$ with $\R/\Z$-coefficients.

For the study of infinitesimal variations of differential characters, we have the following
remark about the tangent space.

\begin{lemma}
\label{tangentdc}
The group of differential characters has the structure of infinite dimensional abelian Lie group. Its tangent space 
at the origin (or by translation, at any point) is naturally identified as
$$
T_ 0 \left( \widehat{H^{k}}(X,\comx/\Z ) \right) = \frac{A^{k-1}(X, \comx )}{dA^{k-2}(X,\comx )}.
$$
\end{lemma}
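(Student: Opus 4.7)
The plan is to linearize the defining conditions of $\widehat{H^{k}}(X,\comx/\Z)$ at the identity. A smooth path of differential characters $t\mapsto (f_{t},\alpha_{t})$ with $(f_{0},\alpha_{0})=(0,0)$ has derivative $(f',\alpha')$ at $t=0$, where, since $T_{0}(\comx/\Z)=\comx$, the first component is a homomorphism $f':Z_{k-1}(X)\to\comx$ and $\alpha'\in A^{k}(X)$. The equations $\delta f_{t}=i_{\Z}(\alpha_{t})$ and $d\alpha_{t}=0$ linearize to
$$
f'(\partial\sigma)=\int_{\sigma}\alpha' \quad\text{for every }\sigma\in S_{k}(X),\qquad d\alpha'=0,
$$
so $T_{0}\widehat{H^{k}}(X,\comx/\Z)$ is identified with the abelian group of such pairs.

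Next I would construct the natural map $\Phi:A^{k-1}(X,\comx)\rar T_{0}\widehat{H^{k}}(X,\comx/\Z)$ by
$$
\Phi(\beta):=\Bigl(z\mapsto \int_{z}\beta,\ d\beta\Bigr).
$$
Stokes' theorem and $d^{2}=0$ verify the linearized relations, and both components vanish when $\beta$ is exact, so $\Phi$ descends to $\ov{\Phi}:A^{k-1}(X,\comx)/dA^{k-2}(X,\comx)\rar T_{0}\widehat{H^{k}}(X,\comx/\Z)$; I would check it is an isomorphism. Injectivity is immediate from de Rham: $\Phi(\beta)=0$ means $\beta$ is closed with vanishing periods, hence exact. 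For surjectivity, take a tangent vector $(f',\alpha')$ and observe first that for any $k$-cycle $z$ one has $\int_{z}\alpha'=f'(\partial z)=0$, so $\alpha'$ represents the zero class in $H^{k}(X,\comx)$; hence $\alpha'=d\beta_{0}$ for some $\beta_{0}\in A^{k-1}(X,\comx)$. The homomorphism $g:Z_{k-1}(X)\to\comx$ defined by $g(z):=f'(z)-\int_{z}\beta_{0}$ then vanishes on every boundary $\partial\sigma$, so it descends to a class in $H^{k-1}(X,\comx)$, which by de Rham is represented by a closed form $\eta\in A^{k-1}(X,\comx)$. Setting $\beta:=\beta_{0}+\eta$ gives $\ov{\Phi}(\beta)=(f',\alpha')$.

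The main subtlety, and the only place where anything nontrivial happens, is the automatic exactness of $\alpha'$ enforced by the constraint $\delta f'=i_{\Z}(\alpha')$: this hidden relation between the two components of a tangent vector is precisely what allows a single $(k-1)$-form $\beta$ to carry both the curvature datum $d\beta=\alpha'$ and the period datum prescribing $f'$. Once the isomorphism at the origin is established, the abelian group structure of $\widehat{H^{k}}(X,\comx/\Z)$ translates it to every point, yielding the final clause of the lemma.
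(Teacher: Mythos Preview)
Your proof is correct and follows essentially the same route as the paper: you define the same map $\Phi(\beta)=(\int_{\cdot}\beta,\,d\beta)$ via the path $t\mapsto(t\int_{\cdot}\beta,\,td\beta)$ and then verify it induces the desired isomorphism. The only difference is presentational: the paper simply invokes the exact sequence \eqref{diffeqn} (whose linearization at the origin yields $0\to H^{k-1}(X,\comx)\to T_{0}\widehat{H^{k}}(X,\comx/\Z)\to dA^{k-1}(X)\to 0$, matching the obvious sequence for $A^{k-1}/dA^{k-2}$), whereas you carry out the injectivity and surjectivity arguments by hand---which amounts to unpacking that five-lemma step explicitly.
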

\begin{proof}
A tangent vector corresponds to a path $(f_t,\alpha _t)$. An element $\beta \in A^{k-1}(X, \comx )$
maps to the path given by $f_t(z):= t \int _z \beta $ and $\alpha _t := t d(\beta )$. 
Looking at the above exact sequence \eqref{diffeqn}, we see that this map induces an isomorphism from 
$A^{k-1}(X, \comx )/ dA^{k-2}(X,\comx )$ to the tangent space of $\widehat{H^{k}}(X,\comx/\Z) $.
\end{proof}


\subsection{Secondary classes and the Cheeger-Chern-Simons classes}\label{Secondary classes}

Suppose $(E,\nabla)$ is a vector bundle with a connection on $X$.
 Then the Chern forms
$$
c_k(E,\nabla) \in A^{2k}_{cl}(X,\Z)
$$
for $0\leq k\leq \m{rank }(E)$, are defined using the universal Weil homomorphism \cite{Chn-Si}.
There is an invariant and symmetric 
polynomial $\cP$ of degree $k$ in $k$ variables on the Lie algebra ${\bf gl}_r$ such that if
$\Omega$ is the curvature of $\nabla$ then $c_k(E,\nabla ) = (-1)^k\cP (\Omega , \ldots , \Omega )$.
When $X_i=X$ for each $i$, then $\cP(X,...,X)= \m{trace}(\wedge^kX)$ (see \cite[p.403]{GriffithsHarris}),
however the wedge product here is taken in the variable $\comx ^r$, not the wedge of forms on the base. 
If $X$ is a diagonal matrix with eigenvalues $\lambda _1,\ldots , \lambda _r$ then
 $\cP(X,...,X) = \sum _I \lambda _{i_1}\cdots \lambda _{i_k}$. We can also express $\cP$ in terms
of the traces of products of matrices. In this expression,
the highest order term of $\cP$ is the symmetrization of $Tr (X_1\cdots X_k)$ multiplied by a constant, the
lower order terms are symmetrizations of $Tr(X_1\cdots X_{i_1})Tr(\cdots ) \cdots Tr(X_{i_a+1}\cdots X_k)$, 
with suitable constant coefficients.

The characteristic classes 
$$
\widehat{c_k}(E,\nabla) \in \widehat{H^{2k}}(X,\comx/\Z)
$$
are defined in \cite{Ch-Si} using a factorization of the universal Weil homomorphism and looking at the universal connections \cite{Narasimhan}. 
These classes are functorial liftings of $c_k(E,\nabla)$.

One of the key properties of these classes is the variational formula in case of a family
of connections. If $\{ \nabla _t\}$ is a $\cC^{\infty}$ family of connections on $E$,
then---refering to Lemma \ref{tangentdc} for the tangent space of the space of differential 
characters---we have the formula
\begin{equation}
\label{varform}
\frac{d}{dt}\widehat{c_k}(E,\nabla _t)  = k\cP (\frac{d}{dt}\nabla _t , \Omega_t, \ldots ,\Omega_t ),
\end{equation}
see \cite[Proposition 2.9]{Ch-Si}. 

If $E$ is topologically trivial, then any connection is connected by a path to the trivial 
connection for which the characteristic class is defined to be zero. The variational formula
thus serves to characterize $\widehat{c_k}(E,\nabla _t)$ for all $t$.

\begin{remark}
If the form $c_k(E,\nabla)$ is zero, then the class $\widehat{c_k}(E,\nabla)$ lies in $ H^{2k-1}(X,\comx/\Z)$. 
If $(E,\nabla)$ is a 
flat bundle, then $c_k(E,\nabla)=0$ and the classes $\widehat{c_k}(E,\nabla)$ are called the {\em secondary 
classes} or {\em regulators} of $(E,\nabla)$. Notice 
that the class depends on the choice of $\nabla$.
We will also refer to these classes as the Chern-Simons classes in $\comx/\Z$-cohomology.
\end{remark}

In the case of a flat bundle, after going to a finite cover the bundle is topologically trivial
by the result of Deligne-Sullivan which will be discussed in \S \ref{trivcanext} below. Thus, at least the
pullback to the finite cover of $\widehat{c_k}(E,\nabla)$ can be understood using the variational
methods described above. 

Beilinson's theory of universal secondary classes yield classes for a flat connection $(E,\nabla)$,
\begin{equation}\label{universal}
\widehat{c_k}(E,\nabla) \in H^{2k-1}(X,\comx/\Z),\,\,k\geq 1
\end{equation}
which are functorial and additive over exact sequences.
Furthermore, Esnault \cite{Es} using a modified splitting principle, Karoubi \cite{Ka2} using $K$-theory have 
defined secondary classes. These classes are functorial and additive.  These classes then agree 
with the universally defined class in \eqref{universal} (see \cite[p.323]{Es}). 

When $X$ is a smooth projective variety,
Dupont-Hain-Zucker \cite{Zu}, \cite{DHZ} and Brylinski \cite{Br} have shown that the Chern--Simons classes are liftings of the Deligne Chern class $c_k^\cD(E)$ under the 
map obtained by dividing out by the Hodge filtered piece $F^k$,
$$
H^{2k-1}(X,\comx/\Z)\lrar H^{2k}_\cD(X,\Z(k)).
$$
By functoriality and additive properties, the classes in \eqref{universal} lift the Chern-Simons 
classes defined above using differential characters, via the projection
$$
\comx/\Z\rar \R/\Z.
$$ 
In fact, Cheeger-Simons explicitly took the real part in their formula at the start of \cite[\S 4]{Ch-Si}. 
See also \cite{Bl} for unitary connections, \cite{Soule}, \cite{GilletSoule} when $X$ is smooth and 
projective; for a discussion on this see \cite{Es2}.


\subsection{Secondary classes of logarithmic connections}\label{logarithmicconnections}


Suppose $X$ is a nonsingular variety and $D\subset X$ an irreducible smooth divisor. Let $U:= X-D$. 
Choose a tubular neighborhood $B$ of $D$ and let $B^{\ast}:= B\cap U = B-D$. 

 Let $(E,\nabla)$ be a complex analytic vector bundle on
$U$ with a connection $\nabla$. 
Consider a logarithmic extension $(\ov{E},\ov{\nabla})$ (see \cite{De}) on $X$ of the connection $(E,\nabla)$.
Assuming that the residues are nilpotent, we want to show that the classes $\widehat{c_k}({E},\nabla)\in H^{2i-1}(U,\comx/\Z)$ 
extend on $X$ to give classes in the cohomology with $\comx/\Z$-coefficients which map to the Deligne Chern class of $\ov{E}$.

We want to use the Mayer-Vietoris sequence (a suggestion from Deligne) to motivate a construction of 
secondary classes in this situation. The precise construction will be carried out in \S \ref{patchedconnection}.

Consider the residue transformation
$$
\eta:\ov{E}\lrar \ov{E}\otimes \Omega_{X}(\m{log} D) \sta{res}{\lrar} \ov{E}\otimes\cO_{D}.
$$  
By assumption $\eta$ is nilpotent and let $r$ be the order of $\eta$.

Consider the {\em Kernel filtration} of $\ov{E}_{D}$ induced by the kernels of the operator 
$\eta$:
$$
0=W_{0,D}\subset W_{1,D} \subset W_{2,D} \subset ...\subset W_{r,D}=\ov{E}_{D}.
$$
Here 
$$
W_{j,D}:= \m{kernel}(\eta^{\circ\,\, j}: \ov{E}_{D}\lrar \ov{E}_{D}).
$$
Denote the graded pieces
$$
\textbf{Gr}_j(\ov{E}_D):= W_{j,D}/W_{j+1,D}
$$
and the associated graded
$$
\textbf{Gr}(\ov{E}_D):= \oplus_{j=0}^{r-1}\textbf{ Gr}_j(\ov{E}_D).
$$

\begin{lemma}\label{tube}
Each graded piece $\textbf{Gr}^j(\ov{E}_D)$ (for $0\leq j< r$) is endowed with a flat connection along $D$.
Furthermore, the filtration of $\ov{E}_{D}$ by $W_{j,D}$ extends to a filtration of 
$\ov{E}$ by holomorphic subbundles $W_r$ defined in a tubular neighborhood $B$ of the divisor $D$. 
On $B^{\ast}$ these subbundles are preserved by the connection $\nabla$, and $\nabla$ induces on each graded piece
$\textbf{Gr}_j(\ov{E}_{B^{\ast}})$ a connection which extends to a flat connection over $B$, and induces
the connection mentioned in the first phrase, on $\textbf{Gr}^j(\ov{E}_D)$.
\end{lemma}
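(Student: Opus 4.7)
The plan is to combine two standard facts about Deligne's canonical extension: flatness of the residue with respect to the induced connection along $D$, and functoriality of the canonical extension with respect to sub-local-systems whose monodromy around $D$ is unipotent. I would first establish the assertion about $D$ alone, then extend the filtration and its graded pieces to the tubular neighborhood $B$ using the local system picture on $B^{\ast}$.

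For the first assertion, I would restrict $\ov{\nabla}\colon \ov E \rar \ov E \otimes \Omega_X(\m{log}\, D)$ to $D$ and use the residue exact sequence $0 \rar \Omega_D \rar \Omega_X(\m{log}\, D)|_D \rar \cO_D \rar 0$ to decompose $\ov\nabla|_D$ as the sum of a connection $\nabla_D$ on $\ov E|_D$ along $D$ and the residue endomorphism $\eta$. Projecting the integrability identity $\ov\nabla^2 = 0$ along the analogous decomposition of $\Omega^2_X(\m{log}\,D)|_D$ yields both flatness of $\nabla_D$ and $\nabla_D(\eta) = 0$, so each $W_{j,D} = \m{ker}(\eta^j)$ is $\nabla_D$-stable and the graded pieces $\textbf{Gr}^j(\ov E|_D)$ inherit flat connections.

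To extend the filtration to $B$, I would pass to the local system $L := \m{ker}(\nabla)$ on $B^{\ast}$. Since $B$ is a disk bundle and $B^{\ast}$ deformation retracts onto the unit circle bundle $S$, a principal $U(1)$-bundle over $D$, the fiber class $\gamma$ is central in $\pi_1(B^{\ast})$. Hence $T := \rho(\gamma)$ commutes with the entire image of $\rho$, so the kernel filtration $\m{ker}((T-I)^j)$ of the fiber of $L$ is invariant under all monodromies and defines a filtration by sub-local-systems $L_1 \subset \cdots \subset L_r = L$ on $B^{\ast}$. Each quotient $L_j/L_{j-1}$ has trivial monodromy along $\gamma$ and, since $\pi_1(B) = \pi_1(B^{\ast})/\langle \gamma \rangle$ by centrality, descends to a local system on $B$, hence to a flat bundle on $B$. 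I would then set $W_j := \ov{L_j}$, the Deligne canonical extension of $L_j$; these are holomorphic subbundles of $\ov E|_B$ preserved by $\nabla$ on $B^{\ast}$, because the multivalued frame construction $z^{-N/(2\pi i)}e_i$ used to build the canonical extension respects any $T$-invariant subspace of the fiber.

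It remains to identify the pieces. On $D$, $W_j$ restricts to $W_{j,D}$ because the residue of the canonical extension is conjugate to $-(2\pi i)^{-1}\log T$ and, for unipotent $T$, $\m{ker}((T-I)^j) = \m{ker}((\log T)^j)$. Each graded piece $W_j/W_{j-1}$ is the canonical extension of $L_j/L_{j-1}$, but since the latter has trivial monodromy around $D$ it is simply the flat bundle associated to the descended local system on $B$; restricting to $D$ recovers the connection from the second paragraph by naturality of the residue. The main obstacle I anticipate is purely bookkeeping: keeping the three descriptions (logarithmic connection with its residue, local system with monodromy filtration, and multivalued flat frame construction) consistent while tracking the filtration and its graded pieces. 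Once centrality of $\gamma$ in $\pi_1(B^{\ast})$ is observed, each individual verification is essentially formal.
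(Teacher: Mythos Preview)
Your proposal is correct and follows essentially the same approach as the paper: both filter the local system on $B^{\ast}$ by kernels of the monodromy operator, observe that the graded pieces have trivial monodromy around $D$ and hence extend as local systems to $B$, and identify this with the residue filtration via the relation $\exp(-2\pi i\,\eta)=\ov{\gamma}_D$. Your treatment is slightly more explicit in two places---you invoke centrality of $\gamma$ in $\pi_1(B^{\ast})$ to justify globally that the kernel filtration is by sub-local-systems (the paper works locally on polydisks), and you derive the flat connection along $D$ directly from integrability and the residue exact sequence rather than as a byproduct of the local-system extension---but the underlying strategy is the same.
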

\begin{proof}
Suppose $n$ is the dimension of the variety $X$. 
Consider a product of $n$-open disks $\Delta^n$ with coordinates $(t_1,t_2,...,t_n)$ 
around a point of the divisor $D$ so that $D$ is locally defined by $t_1=0$.
Let $\gamma$ be the generator of the fundamental group of the punctured disk 
$\Delta^n- \{t_1=0\}$. Then $\gamma$ is the monodromy 
operator acting on a fibre $E_t$, for $t\in \Delta^n-\{t_1=0\}$.
The operator
$$
N=\m{log } \gamma = (\gamma-I) - \f{1}{2}(\gamma -I)^2 + \f{1}{3}(\gamma - I)^3-...
$$
is nilpotent since by assumption the local monodromy $\gamma$ is unipotent. Further, the order of unipotency of 
$\gamma$ coincides with the order of nilpotency of $N$.
Consider the filtration on the fibre $E_t$ induced by the operator $N$:
$$
0=W^0(t)\subset W^1(t)\subset ...\subset W^{r}(t)=E_t.
$$
such that
$$
W^j(t):= \m{kernel}(N^{j}:E_t\lrar E_t).
$$
Denote the graded pieces
$$
\textbf{gr}^j_t:=W^j(t)/W^{j+1}(t).
$$
Then we notice that the operator $N$ acts trivially on the graded pieces $\textbf{gr}^j_t$. This means 
that $\gamma$ acts  as identity on $\textbf{gr}^j_t$. In other words, $\textbf{gr}^j_t$ (for 
$t \in \Delta^n$) forms a local system on $\Delta^n$ and extends as a local system $\textbf{gr}^j$ in a 
tubular neighbourhood $B$ of $D$ in ${X}$. 

The operation of $\gamma$ around $D$ can be extended to the boundary (see \cite{De} or 
\cite[c) Proposition]{Es-Vi}). More precisely, the operation $\gamma$ (resp. $N$) extends to the sheaf $\ov{E}$ 
and defines 
an endomorphism $\ov{\gamma}$ (resp.$\ov{N}$) of $\ov{E}_{D}$ such that
$$
\m{exp}(-2\pi i.\eta)=\ov{\gamma}_D.
$$
This implies that the kernels defined by the residue transformation $\eta$ and $\ov{N}$ are the same over $D$. 
The graded piece $\textbf{Gr}^j$ is the bundle associated to the local system $\textbf{gr}^j$ in a 
tubular neighbourhood 
$B$ of $D$ in ${X}$. 
\end{proof}
 
\begin{corollary}\label{Kgroup} If $(E_B,\nabla_B)$ denotes the restriction of $(\ov{E},\ov{\nabla})$ on the tubular neighbourhood $B$, then
in the $K_0$-group $K_{an}(B)$ of analytic vector bundles, we have the equality
$$
{E}_{B}= \textbf{Gr}(E_B)=\oplus_j\textbf{Gr}^j({E}_B).
$$
\end{corollary}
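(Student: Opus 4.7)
The plan is to deduce this directly from the filtration by holomorphic subbundles already constructed in Lemma \ref{tube}, using the standard fact that a filtered vector bundle and its associated graded represent the same class in $K_0$. First I would unpack what Lemma \ref{tube} gives on the tubular neighborhood $B$: a filtration
$$
0 = W_0 \subset W_1 \subset \ldots \subset W_r = E_B
$$
by holomorphic subbundles, with successive quotients $W_{j+1}/W_j \cong \textbf{Gr}^j(E_B)$. The crucial input is that each $W_j$ is locally free on all of $B$ (not merely a coherent subsheaf that could jump along $D$); this is precisely what Lemma \ref{tube} provides by extending the graded pieces as local systems $\textbf{gr}^j$ from $D$ into a neighborhood.

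Next I would read off the $K$-theoretic consequence. Each inclusion $W_j \hookrightarrow W_{j+1}$ fits into a short exact sequence
$$
0 \lrar W_j \lrar W_{j+1} \lrar \textbf{Gr}^j(E_B) \lrar 0
$$
of holomorphic vector bundles on $B$, so in $K_{an}(B)$ one gets the relation $[W_{j+1}] = [W_j] + [\textbf{Gr}^j(E_B)]$. Iterating over $j = 0, 1, \ldots, r-1$ telescopes to
$$
[E_B] \; = \; [W_r] \; = \; \sum_{j=0}^{r-1} [\textbf{Gr}^j(E_B)] \; = \; \Bigl[\,\bigoplus_{j=0}^{r-1} \textbf{Gr}^j(E_B)\,\Bigr],
$$
the last equality holding because direct sum realizes the sum in $K_0$. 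This is exactly the asserted identity.

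There is no substantive obstacle: all the real work has already been done in Lemma \ref{tube}, and the corollary is a clean restatement in $K$-theoretic language. If anything needs to be verified carefully, it is only that the quotients $W_{j+1}/W_j$ are locally free on $B$, which follows from the fact that both $W_{j+1}$ and $W_j$ are holomorphic subbundles and their quotient is flat-connected (hence locally free) on the tubular neighborhood via the local system $\textbf{gr}^j$ extended from $D$.
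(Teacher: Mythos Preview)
Your proposal is correct and matches the paper's approach: the paper gives no proof at all beyond the end-of-proof symbol, treating the corollary as an immediate consequence of the filtration constructed in Lemma \ref{tube}. You have simply spelled out the standard telescoping argument in $K_0$ that the paper leaves implicit.
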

\eop

\begin{corollary}\label{sclasses} 
We can define the secondary classes of the restriction $({E}_{B},\nabla_{B})$ to be
$$
\widehat{c}_i({E}_{B},\nabla_B):= \widehat{c}_i(\textbf{Gr}(E_B))
$$ 
in $H^{2i-1}(B,\comx/\Z)$.
\end{corollary}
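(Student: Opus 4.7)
The plan is to read the right-hand side as an object that already makes sense from the framework of \S\ref{Secondary classes}, and then check that it lands in the asserted subgroup. First I would invoke Lemma \ref{tube} to equip each graded piece $\textbf{Gr}^j(\ov E_B)$ with its canonical flat connection on the tubular neighborhood $B$, so that the direct sum
$$
\textbf{Gr}(E_B) = \bigoplus_{j=0}^{r-1} \textbf{Gr}^j(E_B)
$$
inherits a flat connection on $B$. The Cheeger--Chern--Simons machinery recalled in \S\ref{Secondary classes} therefore produces well-defined differential characters $\widehat{c}_i(\textbf{Gr}(E_B)) \in \widehat{H^{2i}}(B,\comx/\Z)$, either directly from the flat connection on the direct sum or, equivalently, from the Whitney sum formula applied to the $\widehat{c}_i(\textbf{Gr}^j(E_B))$.

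The second step is to promote these differential characters to classes in $H^{2i-1}(B,\comx/\Z)$. Because the connection on $\textbf{Gr}(E_B)$ is flat, its curvature $\Omega$ vanishes identically on $B$, so the Chern forms $c_i(\textbf{Gr}(E_B)) = (-1)^i \cP(\Omega,\dots,\Omega)$ are identically zero as elements of $A^{2i}_{cl}(B,\Z)$. The exact sequence \eqref{diffeqn} then forces $\widehat{c}_i(\textbf{Gr}(E_B))$ to lie in the kernel of the projection $\widehat{H^{2i}}(B,\comx/\Z)\to A^{2i}_{cl}(B,\Z)$, which is precisely $H^{2i-1}(B,\comx/\Z)$.

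Finally, I would point to Corollary \ref{Kgroup} to justify calling this the secondary class of $(E_B,\nabla_B)$ itself. Since $E_B = \textbf{Gr}(E_B)$ in $K_{an}(B)$, any additive assignment of secondary classes to flat bundles on $B$ must agree on the two sides; our definition is just the statement that the graded object serves as a canonical representative for secondary-class purposes.

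The main conceptual point — which is what prevents a direct definition without passing to the graded — is that $\nabla_B$ on $\ov E$ has logarithmic singularities along $D$, so the naive Chern--Simons differential character of $(E_B,\nabla_B)$ is not defined on the tubular neighborhood $B$. The only real work is the verification already done in Lemma \ref{tube} that the kernel filtration of the residue induces honest holomorphic subbundles on $B$ with a flat connection on each graded piece; once that is in hand, the corollary is a formal consequence of the Cheeger--Simons formalism plus the vanishing of the Chern forms for a flat connection.
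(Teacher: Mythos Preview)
Your proposal is correct and aligns with the paper's treatment: the paper gives no proof at all (just an end-of-proof symbol), regarding the corollary as an immediate definitional consequence of Lemma~\ref{tube} and Corollary~\ref{Kgroup}. Your argument simply unpacks the implicit reasoning---that the flat connection on $\textbf{Gr}(E_B)$ furnished by Lemma~\ref{tube} yields vanishing Chern forms, hence the Cheeger--Simons class lands in $H^{2i-1}(B,\comx/\Z)$ via \eqref{diffeqn}---which is exactly what the paper takes for granted.
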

\eop

For the above construction, we could have replaced the kernel filtration by Deligne's {\em monodromy weight filtration}
$$
0 = W_{-r-1} \subset \ldots \subset W_r = E
$$
or indeed by any filtration of the flat bundle $(E_{B^{\ast}}, \nabla _{B^{\ast}})$
satisfying the following condition: we say that $W_{\cdot}$ is {\em graded-extendable} if 
it is a filtration by flat subbundles or equivalently by sub-local systems, and if 
each associated-graded piece $Gr^W_j$ corresponds to a local system which extends from $B^{\ast}$ to
$B$.


Consider a tubular neighbourhood $B$ of $D$, as obtained in Lemma \ref{tube}, and 
$B^{\ast}:= B\cap U = B-D$. Associate the Mayer-Vietoris sequence for the pair $(U,B)$:
{\small
\begin{eqnarray*}\label{MayerVietoris}
H^{2i-2}(B^{\ast} ,\comx/\Z)\rar H^{2i-1}(X,\comx/\Z)\rar H^{2i-1}(B,\comx/\Z)\oplus H^{2i-1}(U,\comx/\Z)\\
\rar H^{2i-1}(B^{\ast},\comx/\Z)\rar.
\end{eqnarray*}
}
Consider the restrictions $(E_B,\nabla_B)$ on $B$ and $(E,\nabla)$ on $U$.
Then we have the secondary classes, defined in Corollary \ref{sclasses}, 
\begin{equation}\label{glueU}
\what{c_i}(E_B,\nabla_B)\in H^{2i-1}(B,\comx/\Z)
\end{equation}
and 
\begin{equation}\label{glueX}
\what{c_i}(E,\nabla)\in H^{2i-1}(U,\comx/\Z)
\end{equation}
such that
$$
\what{c_i}(E_B,\nabla_B)|_{B^{\ast}}=\what{c_i}(E,\nabla)|_{B^{\ast}}\in H^{2i-1}(B^{\ast},\comx/\Z).
$$
The above Mayer-Vietoris sequence 
yields a class
\begin{equation}\label{mayervietoris}
\what{c}_i(\ov{E},\ov\nabla)\in H^{2i-1}(X,\comx/\Z)
\end{equation}
which is obtained by glueing the classes in \eqref{glueU} and \eqref{glueX}.

As such, the Mayer-Vietoris sequence doesn't uniquely determine the class: there is a possible
indeterminacy by the image of $H^{2i-2}(B^{\ast}, \comx / \Z )$ under the connecting map. 
Nonetheless, we will show in \S \ref{patchedconnection}, using a patched connection, that there is a 
canonically determined class $\what{c_i}(\ov{E},\ov\nabla)$ as above
which is functorial and additive (\S \ref{sec-rigidity}) and moreover it lifts the Deligne Chern class
(\S \ref{delignecompatibility}).


\section{The $\cC^\infty$-trivialization of canonical extensions}\label{trivcanext}


To further motivate the construction of regulator classes, we digress for a moment to 
give a generalization of the result of 
Deligne and Sullivan on topological triviality of flat bundles, to the case of the canonical extension. 
The topological model of the canonical extension we obtain in this section, on an idea communicated to us 
by Deligne \cite{De3}, motivates the construction 
of a filtration triple in \S \ref{deformation-filtration} which is required to define regulator 
classes using $K$-theory. 

Suppose $X$ is a proper $\cC^\infty$-manifold of dimension $d$. Let $E$ be a complex vector bundle
of rank $n$.
It is well-known that if $N\geq \f{d}{2}$, then the Grassmanian manifold $\m{Grass}(n, \comx^{n+N})$ 
of $n$-dimensional subspaces of $\comx^{n+N}$, 
classifies complex vector bundles of rank $n$ on manifolds of dimension $\leq d$. In other words, 
given a complex vector bundle $E$ on $X$, there exists
a morphism
$$
f:X\lrar \m{Grass}(n,\comx^{n+N})
$$
such that the pullback $f^*\cU$ of the tautological bundle $\cU$ on $\m{Grass}(n,\comx^{n+N})$ is $E$.
If the morphism $f$ is homotopic to a constant map then $E$ is trivial as a $\cC^\infty$-bundle.
This observation is used to obtain an upper bound for the order of torsion of Betti Chern classes of flat bundles.

 
\subsection{$\cC^\infty$-trivialization of flat bundles}

Suppose $E$ is equipped with a flat connection $\nabla$. Then the Chern-Weil theory implies that the Betti Chern classes $c^B_i(E) \in H^{2i}(X,\Z)$ are torsion.
An upper bound for the order of torsion was given by Grothendieck \cite{Grothendieck}.
An explanation of the torsion-property is given by the following theorem due to Deligne and Sullivan:

\begin{theorem}\cite{De-Su}\label{DeligneSullivan}
Let $V$ be a complex local system of dimension $n$ on a compact polyhedron $X$ and $\cV=V\otimes \cO_X$ be the corresponding 
flat vector bundle. There exists a finite surjective covering $\pi:\tilde{X}\lrar X$ of $X$ such that the pullback vector 
bundle $\pi^*\cV$ is trivial as a $\cC^\infty$-bundle.
\end{theorem}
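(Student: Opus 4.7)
The plan is to reduce the monodromy to a representation over a finitely generated subring of $\comx$, kill its reduction modulo a maximal ideal with finite residue field by passing to a finite cover, and then invoke Sullivan's profinite completion technology to obtain $\cC^\infty$-triviality on a further finite cover.

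First, I would let $\rho: \pi_1(X) \to GL_n(\comx)$ be the monodromy of $V$. Since $X$ is a compact polyhedron, $\pi_1(X)$ is finitely generated, so the matrix entries of the generators of its image lie in a finitely generated $\Z$-subalgebra $A \subset \comx$, and $\rho$ factors through $GL_n(A)$. By Zariski's form of the Nullstellensatz applied to the Jacobson ring $A$, there exists a maximal ideal $\mathfrak{m} \subset A$ whose residue field $k := A/\mathfrak{m}$ is a \emph{finite} field, of some residue characteristic $\ell$.

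Second, I would consider the reduction $\bar\rho: \pi_1(X) \to GL_n(k)$, whose image is finite. Letting $\pi: \tilde X \to X$ be the finite Galois cover corresponding to $\ker(\bar\rho)$, the pulled-back monodromy $\pi^{*}\rho$ takes values in the first congruence kernel $\Gamma_1 := \ker(GL_n(A) \to GL_n(k))$. It remains to show that the associated $\comx$-vector bundle on $\tilde X$ is $\cC^\infty$-trivializable, possibly after a further finite cover. The classifying map $\tilde X \to BGL_n(\comx)$ factors at the discrete level through $B\Gamma_1$; iterating the reduction modulo $\mathfrak{m}^k$ for all $k$, it further factors through the classifying space of the pro-$\ell$ completion of $\Gamma_1$.

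The main obstacle is this final step: translating the arithmetic factorization through a pro-$\ell$ group into the topological statement of $\cC^\infty$-triviality. The essential input is Sullivan's theorem on profinite completion of spaces, used in \cite{De-Su}: on the finite polyhedron $\tilde X$, the composition $\tilde X \to B\Gamma_1 \to BGL_n(\comx)$ is controlled by its $\ell$-adic and rational behavior. The rational part vanishes by the Chern--Weil theorem (all Chern classes of a flat bundle are torsion in $H^{*}(\tilde X,\Z)$), while the $\ell$-adic part can be trivialized by killing the finitely many torsion obstructions living in $H^{*}(\tilde X,\Z/\ell^{N})$, which is possible on the finite complex $\tilde X$ by passing to a further finite cover trivializing these obstructions. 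Composing all the covers produces the desired $\tilde X \to X$ on which $\pi^{*}\cV$ is $\cC^\infty$-trivial.
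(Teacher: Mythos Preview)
The paper does not give its own proof of this theorem; it is quoted from \cite{De-Su}. However, the method of \cite{De-Su} is sketched in the paper when proving the extension in Proposition~\ref{trivext} and Lemma~\ref{model}, and your proposal departs from it at a decisive point.

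The Deligne--Sullivan argument uses \emph{two} maximal ideals $q_1,q_2\subset A$ with \emph{distinct} residue characteristics, and passes to the finite cover on which $\rho$ becomes trivial modulo both. The reason is Sullivan's Hasse principle: one must show that the $\ell$-adic completion of the classifying map into the Grassmannian is null-homotopic for \emph{every} prime $\ell$. The Lemme of \cite{De-Su} shows that triviality of the bundle modulo an ideal $q$ of residue characteristic $p$ forces the $\ell$-adic completion of the classifying map to be null for all $\ell\neq p$. With a single ideal $\mathfrak m$ of residue characteristic $\ell$ you therefore control every prime \emph{except} $\ell$ itself; two ideals with different residue characteristics are exactly what is needed to cover all primes at once.

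Your assertion that the classifying map $\tilde X\to BGL_n(\comx)$ ``further factors through the classifying space of the pro-$\ell$ completion of $\Gamma_1$'' is not correct: the inclusion $\Gamma_1\hookrightarrow GL_n(\comx)$ (with the usual topology on the target) does not factor through any profinite quotient of $\Gamma_1$. It is true that the successive congruence quotients $\Gamma_k/\Gamma_{k+1}$ are elementary abelian $\ell$-groups, but this points in the opposite direction to what you need. Finally, the last step---killing the remaining $\ell$-adic obstructions ``by passing to a further finite cover''---is unjustified: arbitrary torsion classes in $H^{\ast}(\tilde X;\Z)$ need not vanish on any finite cover, and even if the Chern classes did, higher obstructions to null-homotopy of the classifying map would remain. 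The two-prime device is precisely what disposes of all primes simultaneously, with no further cover required.
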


An upper bound for the order of torsion is also prescribed in their proof which depends on the field of definition of the monodromy representation.


\subsection{$\cC^\infty$-trivialization of canonical extensions}\label{can.ext}


Suppose $X$ is a complex analytic variety $D\subset X$ a smooth irreducible divisor, and put $U:= X-D$. 
Consider a flat vector bundle $(E,\nabla)$ on $U$ and its canonical extension $(\ov{E},\ov\nabla)$ on 
$X$. Assume that the residues of $\ov\nabla$ are nilpotent.
Then a computation of the de Rham Chern classes by Esnault \cite[Appendix B]{Es-Vi} shows that
these classes are zero. This implies that the Betti Chern classes of $\ov E$ are torsion.
We want to extend the Deligne-Sullivan theorem in this case, reflecting the torsion property of the Betti Chern classes.

\begin{proposition}\label{trivext}
Let $E$ be a flat vector bundle on $U= X- D$, with unipotent monodromy around $D$. 
There is a finite covering $\tilde{U}\lrar U$ 
such that if $\tilde{ X}$ is the normalization of $X$ in
$\tilde{U}$, then the canonical extension of $\pi^*E$ to $\tilde{X}$ is trivial as a $\cC^\infty$-bundle.
\end{proposition}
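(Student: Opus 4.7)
My plan is to extend the Deligne--Sullivan argument using the topological decomposition of the canonical extension recorded in Lemma \ref{tube} and Corollary \ref{Kgroup}. The key observation is that, on a tubular neighborhood $B$ of $D$, the bundle $\ov E |_B$ is $\cC^{\infty}$-isomorphic to the direct sum $\bigoplus_j \textbf{Gr}^j(\ov E|_B)$, where each graded piece is a flat bundle arising from a local system on $B$ (hence on $D$), because the unipotent local monodromy acts trivially on associated graded. Thus the $\cC^{\infty}$-type of $\ov E$ on $X$ is assembled from the flat bundle $E$ on $U$, the flat bundle $\bigoplus_j \textbf{Gr}^j$ on $B$, and a clutching isomorphism on $B^{\ast}$.

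First, I would apply Theorem \ref{DeligneSullivan} to $\cV=E|_U$, regarded as a local system on the compact manifold with boundary $X\setminus B^{\circ}$ (which is homotopy equivalent to $U$), to produce a finite surjective \'etale cover $\pi_1:U_1\lrar U$ trivializing $\pi_1^{\ast}E$ as a $\cC^{\infty}$-bundle. Let $X_1$ denote the normalization of $X$ in $U_1$, $D_1$ its boundary divisor, and $B_1\supset D_1$ the corresponding tubular neighborhood. Next, I would apply Theorem \ref{DeligneSullivan} a second time to the finite-dimensional flat bundle $\bigoplus_j \pi_1^{\ast}\textbf{Gr}^j$ viewed as a flat bundle on $D_1$, producing a further finite \'etale cover after which this direct sum becomes $\cC^{\infty}$-trivial. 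Normalizing $X_1$ in the composed cover yields a single finite cover $\pi:X_2\lrar X$ such that $\pi^{\ast}E$ is $\cC^{\infty}$-trivial on $U_2=\pi^{-1}(U)$, and via Corollary \ref{Kgroup} combined with the deformation retract $B_2\rar D_2$, the canonical extension on $B_2$ is $\cC^{\infty}$-trivial as well.

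The remaining, and main, obstacle is the gluing on $B_2^{\ast}$: the two trivializations differ by a $\cC^{\infty}$-map $B_2^{\ast}\lrar GL_r(\comx)$ which must be modified, up to homotopy, so as to extend across $D_2$. Restricted to a typical $S^1$-fiber of the tubular structure $B_2^{\ast}\cong S_2\times(0,1)$, this clutching cocycle realizes the local monodromy, which by hypothesis lies in the unipotent radical of $GL_r(\comx)$; since the unipotent subgroup is contractible, such a loop is null-homotopic and the cocycle may be deformed to be constant along the $S^1$-direction. The residual transverse datum is then a $\cC^{\infty}$-map from $D_2$ to $GL_r(\comx)$ whose homotopy class can be killed by a final application of Deligne--Sullivan on $D_2$. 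Composing this last refinement with the previous two covers yields the single finite cover claimed in the proposition.
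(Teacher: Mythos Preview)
Your overall Mayer--Vietoris strategy is natural, but the last two steps do not work as written.

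The decisive gap is step~6. After you have deformed the clutching cocycle to a map $h:D_2\to GL_r(\comx)$, you propose to kill its homotopy class by ``a final application of Deligne--Sullivan on $D_2$''. But Theorem~\ref{DeligneSullivan} concerns representations of $\pi_1$, i.e.\ homotopy classes of maps $D_2\to BGL_r(\comx)^{\delta}$; it says nothing about maps $D_2\to GL_r(\comx)$. The set $[D_2,GL_r(\comx)]$ is governed by the full cohomology of $D_2$ (e.g.\ $\pi_3(GL_r(\comx))=\Z$ for $r\geq 2$), not just by $\pi_1(D_2)$. If $D_2$ is simply connected there are no nontrivial finite covers at all, yet $h$ can still be essential. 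So no finite cover has any reason to kill $h$.

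Step~5 is also not justified. You assert that the clutching cocycle $g$, restricted to an $S^1$-fibre, ``realizes the local monodromy'' and hence lies in the contractible unipotent subgroup. But $g$ compares two \emph{arbitrary} $\cC^{\infty}$ trivializations produced by Deligne--Sullivan; these are not flat frames, and the loop $g|_{S^1}$ in $GL_r(\comx)$ has a winding number in $\pi_1(GL_r(\comx))\cong\Z$ that depends on those choices, not on the unipotence of $\rho(\gamma)$. There is no mechanism in your argument forcing $g|_{S^1}$ to take values in, or be homotopic into, the unipotent radical.

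The paper avoids this clutching problem entirely by working arithmetically with the whole package at once. One realizes $\rho$, the filtration $W$, and the graded local systems over a finitely generated ring $A\subset\comx$, then passes to a single finite cover on which \emph{all} of this data becomes trivial modulo two maximal ideals $q_1,q_2$ of distinct residue characteristic. One then builds a global model of the canonical extension as an algebraic vector bundle over a scheme (using the deformation $\sum t^i F^i$ on $(U\cap B)\times\A^1$ to interpolate between $V_A$ and its associated graded), and applies the Sullivan/Deligne--Sullivan $l$-adic Hasse--principle argument directly to this global object. Because the mod-$q$ triviality is imposed on the representation, the filtration, and the graded pieces simultaneously, the entire model bundle is trivial mod $q$ and the classifying map is $l$-adically null for every $l$; no residual clutching obstruction ever appears.
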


Note, in this statement, that the normalization $\tilde{X}$ is smooth, and the ramification of the map
$\tilde{X}\rightarrow X$ is topologically constant along $D$. 

The following proof of this proposition is due to Deligne and we reproduce it from \cite{De3}.

Given a flat connection $(E,\nabla)$ on $U$ with unipotent monodromy along $D$, by Lemma \ref{tube}, 
there is a vector bundle $\cF^r$ with a 
filtration on a tubular neighbourhood $B$ of $D$:
$$
(0)= \cF^0\subset \cF^1\subset...\subset \cF^r = \ov{E}|_B
$$
such that the graded pieces are flat connections associated to local systems $V_i$.  

Suppose the monodromy representation of $(E,\nabla)$ is given by
$$
\rho: \pi_1(X)\lrar GL(A)
$$
where $A\subset \comx$ is of finite type over $\Z$. The filtration of the previous paragraph
is also a filtration of
local systems of $A$-modules over $B^{\ast}$. Then the canonical extension itself should 
be trivial as soon as for two maximal ideals $q_1, q_2$ of $A$ having distinct residue field 
characteristic, $\rho$ is trivial mod $q_1$ and $q_2$.
Consider a finite \'etale cover
\begin{equation}\label{cover}
\pi':{U'}\lrar U
\end{equation}
corresponding to the subgroup of $\pi_1(U,u)$ formed of elements $g$ such that $\rho(g)\equiv 1$, mod $q_1$ and mod $q_2$. The index of 
this subgroup divides the order of $GL_r(A/q_1)\times GL_r(A/q_2)$ (see 
\cite{De-Su}). 
Construct a further cover 
$$
\pi:\tilde{U}\lrar U'\lrar U 
$$
such that the filtration and local systems $V_i$ are constant mod $q_1$ and mod $q_2$.

The proof of Proposition \ref{trivext} now follows from a topological result which we formulate as follows.
Suppose a polytope $X$ is the union of polytopes $U$ and $B$, intersecting along $B^{\ast}$.
Suppose we are given:
\newline
(1) On $U$, there is a flat vector bundle $\cV$ coming from a local system $V_A$ of free $A$-modules of rank $n$.
\newline
(2) a filtration $F$ of $V_A$ on $B^{\ast}$ such that the graded piece $\textbf{gr}^i_F$  is a local system of free $A$-modules of rank $n_i$.
\newline
(3) local systems $V_A^i$ on $B$ extending the $\textbf{gr}^i_F$ on $B^{\ast}$.

Suppose these data are trivial mod $q_1, q_2$, i.e., we have constant $V_A$, constant filtration and 
constant extensions.

 From $(V_A,F,V_A^i)$ we get using the embedding $A\subset \comx$ a flat vector bundle $\cV$, a 
filtration $F$ and extensions $\cV^i$. One can use these to construct a vector bundle on $X$ (no longer flat), 
unique up to non-unique isomorphisms as follows: on $B^{\ast}$ pick a vector bundle splitting of the filtration 
and use it to glue to form a  vector bundle $\ov\cV$ on $X$. This should be the topological translation of
``canonical extension''.

\begin{lemma}\label{model}
In the above situation, the vector bundle $\ov\cV$ is trivial.
\end{lemma}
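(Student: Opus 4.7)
The plan is to show $\ov\cV$ is classified by a null-homotopic map $X\to BGL_n(\comx)$, by applying the Deligne--Sullivan argument piece-by-piece on the pushout $X = U\cup_{B^{\ast}} B$ and verifying compatibility on the overlap.

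By construction, $\ov\cV|_U = \cV$ is the flat bundle attached to the local system $V_A$, and $\ov\cV|_B$ is (via the chosen splitting of $F$) $\cC^\infty$-isomorphic to $\bigoplus_i \cV^i$, where each $\cV^i$ is the flat bundle associated to the local system $V_A^i$. Each of $\cV$ (on $U$) and $\cV^i$ (on $B$) is a flat bundle whose monodromy representation is trivial mod $q_1$ and mod $q_2$, with $q_1,q_2$ of distinct residue characteristic. Theorem \ref{DeligneSullivan} (and its proof via the Sullivan arithmetic square) then yields $\cC^\infty$-trivializations of $\cV$ on $U$ and of each $\cV^i$ on $B$, hence a $\cC^\infty$-trivialization of $\ov\cV|_B$.

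The essential step is to make these two trivializations match on $B^{\ast}$ up to homotopy, so that they glue through the pushout diagram \eqref{pushout} to a global trivialization. The two trivializations differ on $B^{\ast}$ by a $\cC^\infty$ automorphism of the trivial bundle, i.e.\ a classifying map $B^{\ast}\to GL_n(\comx)$, and one must show it is null-homotopic. Here I would exploit the hypothesis that the filtration $F$ on $B^{\ast}$ and the extensions $V_A^i$ are themselves trivial mod $q_1$ and $q_2$: the Deligne--Sullivan trivialization of $\cV|_{B^{\ast}}$ can then be chosen to be compatible with the filtration, so that under a splitting it becomes the direct sum of the Deligne--Sullivan trivializations of the $\cV^i|_{B^{\ast}}$. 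Any two splittings of a filtered $\cC^\infty$ bundle differ by a section of an affine bundle whose fibers are contractible, so the residual ambiguity is null-homotopic.

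The main obstacle is precisely this compatibility step, i.e.\ lifting Theorem \ref{DeligneSullivan} from the category of flat bundles trivial mod $q_1,q_2$ to the category of filtered flat bundles whose filtration and graded pieces are also trivial mod $q_1,q_2$, in such a way that the trivialization is natural with respect to passage to the associated graded. Concretely, on the rational side of the Sullivan arithmetic square both the filtered bundle and its associated graded have vanishing Chern classes (by flatness), while on each $p$-profinite factor one of the two primes $q_1,q_2$ has residue characteristic $\neq p$, killing the monodromy of the filtered local system in $GL_n$ of the residue field at that prime; in both completions the trivializations of $\cV|_{B^{\ast}}$ and of $\bigoplus_i \cV^i|_{B^{\ast}}$ thus agree up to a contractible choice, giving the required null-homotopy and hence the global trivialization of $\ov\cV$.
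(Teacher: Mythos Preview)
Your piecewise strategy has a real gap at the gluing step, and you correctly flag it as ``the main obstacle''. Knowing that $\ov\cV|_U$ and $\ov\cV|_B$ are each trivial does not make $\ov\cV$ trivial: since $X$ is the homotopy pushout of $U\leftarrow B^{\ast}\rightarrow B$, a classifying map $X\to BGL_n(\comx)$ whose restrictions to $U$ and $B$ are null-homotopic is still governed by the pair of chosen null-homotopies, i.e.\ by a class coming from $B^{\ast}$ into $\Omega BGL_n(\comx)\simeq GL_n(\comx)$. The Deligne--Sullivan argument produces only the \emph{existence} of a null-homotopy after each $l$-adic completion; it does not give a canonical or filtration-compatible trivialization, so your assertion that the two trivializations on $B^{\ast}$ ``agree up to a contractible choice'' in each completion is precisely the statement that needs proof, not a consequence of what has been established.

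The paper sidesteps this by a different device: instead of trivializing on $U$ and $B$ separately and then gluing, it builds a \emph{single} algebraic bundle that already contains the gluing. One replaces $U,B,B^{\ast}$ by schemes $U_1,B_1,U_1\cap B_1$ over $\m{Spec}\,\Z$ (unions of affine spaces with the correct homotopy types), forms the closed subscheme $(U_1\times\{1\})\cup((U_1\cap B_1)\times\A^1)\cup(B_1\times\{0\})$ of $(U_1\cup B_1)\times\A^1$, and defines $\widetilde\cV$ over $\m{Spec}\,A$ by taking $V_A$ on $U_1\times\{1\}$, $\bigoplus V_A^i$ on $B_1\times\{0\}$, and the Rees module $\sum_i t^i F^i\subset A[t]\otimes V_A$ on the interpolating $(U_1\cap B_1)\times\A^1$. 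This Rees deformation from a filtration to its grading is the key idea: it is an \emph{algebraic} interpolation, so the entire glued bundle is defined over $A$. The hypothesis that $(V_A,F,V_A^i)$ are trivial mod $q_1$ and $q_2$ then makes $\widetilde\cV$ itself trivial mod $q_1$ and $q_2$, and one applies the lemma of \cite{De-Su} \emph{once} to the classifying map of $\widetilde\cV_{\comx}$ into the Grassmannian, checking $l$-adic null-homotopy by reducing mod the $q_j$ of residue characteristic $\neq l$. No separate compatibility argument on $B^{\ast}$ is required.
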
  
\begin{proof}
As in \cite{De-Su}, one constructs algebraic varieties
$$
U_1\cap B_1\hookrightarrow U_1,\, U_1\cap B_1\hookrightarrow B_1
$$ 
over $\m{Spec } (\Z)$, which are unions of affine spaces, with the homotopy of
$$
U\cap B \hookrightarrow U,\, U\cap B\hookrightarrow B.
$$
In $(U_1\cup B_1)\times \A^1$, let us take the closed subscheme
$$
(U_1\times \{1\})\cup ((U_1\cap B_1)\times \A^1)\cup (B_1\times \{0\}).
$$
This is a scheme over $\m{Spec }\Z$.

Over $\m{Spec }A$, our data gives a vector bundle $\widetilde{\cV}$:
on $U_1$, given by $V_A$, on $B_1$ by $\oplus V_A^i$, on $(U_1\cap B_1)\times \A^1$ by an interpolation of them: given by the 
subcoherent sheaves $\sum t^{i}.F^i$ of the pullback of $V_A$ (deformation of a filtration to a grading).
More precisely, on $(U_1\cap B_1)\times \A^1$, we consider the coherent subsheaf
$$
\sum _{i} t^i \cdot F_i \subset A[t]\otimes V_A.
$$
It is locally free over $(U_1\cap B_1)\times \A^1$, so it corresponds to a vector bundle. 
When $t=1$, on $U_1\times \{1\}$, this yields the vector bundle given by $V_A$. 
When $t=0$, on $B_1\times\{0\}$, we get the associated graded vector bundle of the
filtration $F$ on $B_1\times \{0\}$.

If we extend scalars to $\comx$, we obtain yet another model $\widetilde{\cV}_{\comx}$
of the canonical extension.
Now mod $q_1, q_2$, we obtain a trivial bundle and the arguments in \cite{De-Su} apply.
Indeed, consider the classifying map 
$$
f:X\rar \m{Grass}(n,\comx^{n+N})
$$
such that the universal bundle on the Grassmanian pulls back to the vector bundle $\widetilde{\cV}$ on $X$.
Here $\m{dim}X= d$ and $N\geq \f{d}{2}$.
Consider the fibre space $X'\rar X$ whose fibre at $x\in X$ is the space of linear embeddings of the 
vector space $\widetilde{\cV}_x$ in $\comx^{n+N}$. The problem is reduced to 
showing that the classifying map $f':X'\rar \m{Grass}(n,\comx^{n+N})$ composed 
with the projection to the $d$-th coskeleton of the Grassmanian is homotopically trivial. 
Since the Grassmanian is simply connected, by Hasse principle for morphisms \cite{Sullivan}, it follows that the 
above composed map is homotopically trivial if and only if for all $l$ the  
$l$-adic completions 
$$
f'_{\hat l}:X'_{\hat l}\rar cosq_d(\m{Grass}(n,\comx^{n+N}))_{\hat l}=cosq_d((\m{Grass}(n,\comx^{n+N})_{\hat l})
$$
are homotopically trivial. Since, there is a maximal ideal $q$ of $A$ whose residue field 
characteristic 
is different from $l$ and such that $\rho$ and the local systems $V_i$ and filtrations are trivial 
mod $q$, the bundle $\widetilde{\cV}$ is trivial  mod $q$.
The lemma from \cite[Lemme]{De-Su} applies directly to conclude that $f'_{\hat l}$ is homotopically 
trivial.  This concludes the lemma.
\end{proof}


\section{Patched connection on the canonical extension}\label{patchedconnection}


The basic idea for making canonical the lifting in \eqref{mayervietoris} is to patch together 
connections sharing the same block-diagonal part,
then apply the Chern-Simons construction to obtain a class in the group of differential 
characters. The
projection to closed forms is zero because the Chern forms of a connection with strictly 
upper-triangular curvature are zero. 
Then, the resulting secondary class is in the kernel in the exact sequence \eqref{diffeqn}.

In this section we will consider a somewhat general open covering situation. However, much
of this generality is not really used in  our main construction of \S \ref{patchsmoothdiv}
where $X$ will be covered by only two open sets and the filtration is trivial on one of them. We hope
that the more general formalism, or something similar, will be useful for the normal-crossings case in the future.


\subsection{Locally nil-flat connections}\label{nil-flat}

Suppose we have a manifold $X$ and a bundle $E$ over $X$,
provided with the following data of local filtrations and connections: we are given a covering of $X$ by open sets $V_i$, and for each $i$
an increasing filtration $W^i$ of the restricted bundle $E|_{V_i}$ by strict subbundles; and furthermore on the associated-graded 
bundles $Gr^{W^i}(E|_{V_i})$ we are given flat connections $\nabla _{i,{\rm Gr}}$. We don't for the moment assume any compatibility between
these for different neighborhoods. Call $(X,E,\{ (V_i,W^i, \nabla _{i, {\rm Gr}})\} )$ a {\em pre-patching collection}. 

We say that a connection $\nabla$ on $E$ is {\em compatible with the pre-patching collection} if on each $V_i$, $\nabla $ preserves
the filtration $W^i$ and induces the flat connection $\nabla _{i, {\rm Gr}}$ on the associated-graded $Gr^{W^i}(E|_{V_i})$.

\begin{proposition}
\label{nildef}
Suppose $(X,E, \nabla )$ is a connection compatible with a 
pre-patching collection $(X,E,\{ (V_i,W^i, \nabla _{i, {\rm Gr}})\} )$. Then:
\newline
(a)\, The curvature form $\Omega$ of $\nabla$ 
is strictly upper triangular with respect to the filtration $W^i$ over each neighborhood $V_i$; 
\newline
(b)\, In particular
if $\cP$ is any invariant polynomial of degree $k$ then $\cP (\Omega , \ldots , \Omega ) = 0$,
for example $Tr (\Omega \wedge \cdots \wedge \Omega ) = 0$; and
\newline
(b)\, The Chern-Simons class of $\nabla$ defines a class $\widehat{c_p}(E,\nabla ) \in 
H^{2p-1}(X, \comx / \Z )$.
\end{proposition}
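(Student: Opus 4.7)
The plan is to prove the three assertions in order. Part (a) carries all the content; parts (b) and (c) follow formally from it.

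For (a), I would work locally on a fixed $V_i$ in a smooth frame adapted to the filtration $W^i$, so that the connection matrix $A$ of $\nabla|_{V_i}$ is block upper triangular, with diagonal blocks given exactly by the connection matrices of the connections that $\nabla$ induces on the graded pieces $\textbf{Gr}^{W^i}(E|_{V_i})$. By the compatibility hypothesis these induced connections coincide with the flat connections $\nabla_{i,{\rm Gr}}$. The curvature $\Omega = dA + A \wedge A$ is then block upper triangular as well, and its block-diagonal entries are precisely the curvatures of the diagonal blocks of $A$, i.e. the curvatures of the $\nabla_{i,{\rm Gr}}$, which vanish by flatness. Hence $\Omega$ is strictly upper triangular with respect to $W^i$ over $V_i$.

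For (b), the algebraic point is that a product of strictly upper triangular matrices is again strictly upper triangular and therefore has vanishing trace. As recalled earlier in the paper, any invariant polynomial $\cP$ of degree $k$ can be written as a linear combination of symmetrizations of products of traces $\m{Tr}(X_{i_1} \cdots X_{i_a})$. Evaluating $\cP(\Omega, \ldots, \Omega)$ in an adapted frame over $V_i$ therefore produces a sum of products of traces of strictly upper triangular matrix-valued forms, each of which vanishes. Since $\cP(\Omega, \ldots, \Omega)$ is a globally defined form on $X$ that vanishes on every $V_i$ of the covering, it vanishes on all of $X$.

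For (c), the Chern--Weil formula gives $c_p(E,\nabla) = (-1)^p \cP(\Omega, \ldots, \Omega) = 0$ by (b). The Chern--Simons construction provides a differential character $\what{c_p}(E,\nabla) \in \what{H^{2p}}(X,\comx/\Z)$ whose image in $A^{2p}_{cl}(X,\Z)$ under the right-hand map of the exact sequence \eqref{diffeqn} is the Chern form $c_p(E,\nabla)$, which vanishes. Exactness of \eqref{diffeqn} then places $\what{c_p}(E,\nabla)$ in the image of $H^{2p-1}(X,\comx/\Z)$, as claimed. The only conceptual point to check is that the block-diagonal part of $\Omega$ used in (a) is genuinely intrinsic and coincides with the curvature of the induced graded connection; this is a direct consequence of the Leibniz rule together with the compatibility of $\nabla$ with $W^i$, so no serious obstacle is expected anywhere in the argument.
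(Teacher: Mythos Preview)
Your proposal is correct and follows essentially the same approach as the paper's own proof: the paper argues (a) intrinsically by noting that $\nabla$ preserves $W^i$ and induces flat connections on the graded pieces, hence $\Omega$ maps $W^i_k$ into $A^2(X,W^i_{k-1})$, while you make the same point via an adapted local frame and the block structure of $dA + A\wedge A$; parts (b) and (c) are argued identically in both. The only difference is that you give slightly more detail where the paper is terse.
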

\begin{proof}
(a): On $V_i$ the connection preserves $W_i$ and induces a flat connection on the graded pieces. This implies exactly that
$\Omega$ is strictly upper-triangular with respect to $W_i$, that is to say that as an $End(E)$-valued $2$-form
we have $\Omega : W^i_k\rightarrow A^2(X,  W^i_{k-1})$.
\newline
 (b): It follows immediately that $Tr (\Omega \wedge \cdots \wedge \Omega ) = 0$,
and the other invariant polynomials are deduced from these by polynomial operations so they vanish too.
\newline
(c): The Chern-Simons class of $\nabla$ projects to zero in $A^k_{cl}(X,\Z )$ by (b), 
so by the basic exact sequence \eqref{diffeqn} it defines
a class in $H^{2p-1}(X, \comx / \Z )$.
\end{proof}

A fundamental observation about this construction is that the class $\widehat{c_p}(E,\nabla )$ depends only
on the pre-patching collection and not on the choice of $\nabla $.

\begin{lemma}
\label{nilinv}
Suppose $(X,E,\{ (V_i,W^i, \nabla _{i, {\rm Gr}})\} )$ is a given pre-patching collection, and suppose $\nabla _0$ and
$\nabla _1$ are connections compatible with this collection. Then the Chern-Simons classes are equal:
$$
\widehat{c_p}(E,\nabla _0 ) =  \widehat{c_p}(E,\nabla _1 ) \;\; \mbox{in}\;\; 
H^{2p-1}(X, \comx / \Z ).
$$
\end{lemma}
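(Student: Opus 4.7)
The plan is to use the variational formula \eqref{varform} along the straight line of connections $\nabla_t := (1-t)\nabla_0 + t\nabla_1$. Since both $\nabla_0$ and $\nabla_1$ preserve each filtration $W^i$ on $V_i$ and induce the same flat connection $\nabla_{i,{\rm Gr}}$ on the associated-graded, the difference $A := \nabla_1 - \nabla_0 \in A^1(X, \mathrm{End}(E))$ vanishes on the associated-graded, i.e.\ is strictly upper-triangular with respect to $W^i$ on each $V_i$. In particular every $\nabla_t$ is itself compatible with the pre-patching collection, so by Proposition \ref{nildef} the curvature $\Omega_t$ is strictly upper-triangular with respect to $W^i$ on each $V_i$ as well.

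First I would verify that the path $t \mapsto \widehat{c_p}(E,\nabla_t)$ lies entirely in the subgroup $H^{2p-1}(X,\comx/\Z) \subset \widehat{H^{2p}}(X,\comx/\Z)$: this is immediate from Proposition \ref{nildef}(b), since each $\nabla_t$ is compatible with the collection so the Chern forms $c_p(E,\nabla_t)$ vanish. Next I would invoke the variational formula to compute
\begin{equation*}
\frac{d}{dt}\widehat{c_p}(E,\nabla_t) = p\,\cP(A,\Omega_t,\ldots,\Omega_t)
\end{equation*}
as an element of the tangent space $A^{2p-1}(X,\comx)/dA^{2p-2}(X,\comx)$ from Lemma \ref{tangentdc}.

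The key point is to show that the $(2p-1)$-form $\cP(A,\Omega_t,\ldots,\Omega_t)$ vanishes identically on $X$. Since the $V_i$ cover $X$, it suffices to check this on each $V_i$. On $V_i$, both $A$ and $\Omega_t$ are strictly upper-triangular endomorphism-valued forms with respect to $W^i$. Using the description of $\cP$ in \S\ref{Secondary classes} as a polynomial in the traces $\mathrm{Tr}(X_{j_1}\cdots X_{j_m})$, each such factor is the trace of a product of strictly upper-triangular endomorphism-valued forms, which is strictly upper-triangular in its endomorphism part and therefore traceless. Hence every term in $\cP$ vanishes pointwise on $V_i$, so $\cP(A,\Omega_t,\ldots,\Omega_t)\equiv 0$ on $X$.

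Thus the derivative of $\widehat{c_p}(E,\nabla_t)$ is zero for all $t$, so the class is constant along the path, giving $\widehat{c_p}(E,\nabla_0) = \widehat{c_p}(E,\nabla_1)$. The only real subtlety I anticipate is the bookkeeping for the trace-vanishing step: one must check that the signs arising from wedge products of endomorphism-valued forms do not spoil the conclusion that a trace of a wedge-product of strictly upper-triangular endomorphism-valued forms vanishes. This is routine (at each point, the endomorphism part of such a product is nilpotent of strictly positive degree, hence traceless), but it is the place where the geometric hypothesis on the filtrations is actually consumed.
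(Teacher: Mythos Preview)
Your proposal is correct and follows essentially the same approach as the paper: both use the affine path $\nabla_t$ between the two compatible connections, observe that $\nabla'_t$ and $\Omega_t$ are strictly upper-triangular with respect to each local filtration $W^i$, and conclude via the variational formula \eqref{varform} that the derivative $p\,\cP(\nabla'_t,\Omega_t,\ldots,\Omega_t)$ vanishes because every trace factor $Tr(\cdots)$ in the expression for $\cP$ is the trace of a strictly upper-triangular endomorphism-valued form. Your write-up is slightly more explicit (e.g.\ checking that each $\nabla_t$ is itself compatible, and noting the path stays in $H^{2p-1}(X,\comx/\Z)$), but there is no substantive difference in method.
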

\begin{proof}
Choose any affine path $\nabla_t$ of compatible connections between $\nabla_0$ and $\nabla_1$.
For $t=0,1$ this coincides with the previous ones. Let $\Omega _t$ denote the curvature form of $\nabla _t$ and
let $\nabla ' _t$ denote the derivative with respect to $t$. 

By Lemma \ref{tangentdc}, note that the tangent space to the group of differential characters (at any point) is given by 
$$
T(\what{H^{2p}}(X,\comx/\Z)) = \frac{A^{2p-1}(X)}{dA^{2p-2}}.
$$
With respect to this description of the tangent spaces, the derivative of the Chern-Simons class is
given by 
$$
p\cP (\nabla '_t , \Omega _t, \ldots , \Omega _t)= p Tr (\nabla '_t \wedge \Omega _t ^{p-1}) + \ldots .
$$
See \S \ref{Secondary classes}, also \cite[Proposition 2.9]{Ch-Si}. 

On any local neighborhood $V_i$, note that $\nabla _t$ preserves the filtration $W^i$, and
induces the original flat connection on $Gr^{W_i}$; hence for all $t$, $\Omega _t$ and $\nabla '_t$ 
are strictly upper triangular. 
It follows that $Tr (\nabla '_t \wedge \Omega _t ^{a-1}) = 0$ and $Tr (\Omega _t^b) = 0$ so 
all the terms in $p\cP (\nabla '_t , \Omega _t, \ldots , \Omega _t)$ vanish (see \cite[p.403]{GriffithsHarris} for the explicit formula of $\cP$). 
By the variational formula \eqref{varform},
the class in $\what{H^{2p}}(X,\comx/\Z)$ defined by $\nabla _t$
is independant of $t$. In other words, the $\nabla _t$ all define the same class in $H^{2p-1}(X, \comx / \Z )$. 
\end{proof}

Say that a bundle with connection $(X,E,\nabla )$ is {\em locally nil-flat} if there exists a pre-patching collection for which
$\nabla$ is compatible. On the other hand, say that a pre-patching collection
$(X,E,\{ (V_i,W^i, \nabla _{i, {\rm Gr}})\} )$ is a {\em patching collection} if there exists at least one compatible connection.
Any compatible connection will be called a {\em patched connection}.

The above Proposition \ref{nildef} and Lemma \ref{nilinv} say that if $(X,E,\nabla )$ is a locally nil-flat 
connection, then we get a 
Chern-Simons class $\widehat{c_p}(E,\nabla )$, and similarly given a patching collection we get a 
class defined as the class associated to any compatible connection; and these classes are all the same 
so they only depend
on the patching collection so they could be denoted by
$$
\widehat{c_p}(X,E,\{ (V_i,W^i, \nabla _{i, {\rm Gr}})\} ) \in H^{2p-1}(X, \comx /\Z ).
$$ 


\subsection{Refinements}


If we are given a filtration $W_k$ of a bundle $E$ by strict subbundles, 
a {\em refinement} $W'_m$ is another filtration by strict subbundles such that for any $k$ 
there is an $m(k)$ such that $W_k = W'_{m(k)}$. In this case, $W'$ induces a filtration $Gr^W(W')$ on $Gr^W(E)$. 
It will be useful to have a criterion for when two filtrations admit a common refinement. 

\begin{lemma}
\label{commonrefine}
Suppose $E$ is a $\cC^{\infty}$ vector bundle over a manifold, and $\{ U_k \} _{k\in K}$ is a finite collection of strict subbundles containing $0$ and $E$. 
Then it is the collection of bundles 
in a filtration of $E$, if and only if the following criterion is satisfied: for all $j,k\in K$ either $U_k \subset U_j$ or $U_j\subset U_k$.
Suppose $\{ W_i \} _{i\in I}$ and $\{ U_k \} _{k\in K}$ are two filtrations of $E$. Then they admit a common refinement if and only
if the following criterion is satisfied: for any $i\in I$ and any $k\in K$, either $W_i\subset U_k$ or else $U_k \subset W_i$.
\end{lemma}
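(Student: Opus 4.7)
The plan is to prove both parts of the lemma by reducing them to elementary statements about chains in a poset, using only the fact that the subsets in question are strict subbundles (so inclusion of one inside another is a well-defined and transitive relation). No differential-geometric input is really needed beyond the fact that a strict subbundle is determined by its underlying sheaf of sections.

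For the first part, the forward implication is immediate: a filtration $0 = U_{k_0} \subsetneq U_{k_1} \subsetneq \cdots \subsetneq U_{k_n} = E$ is by definition a totally ordered chain, so every pair of its members is comparable under inclusion. For the converse, I would argue that a finite collection of strict subbundles that is pairwise comparable under inclusion is automatically totally ordered: given any two members $U_j$ and $U_k$, the hypothesis gives $U_j \subset U_k$ or $U_k \subset U_j$, so by finite induction one can list the collection as $U_{k_0} \subset U_{k_1} \subset \cdots \subset U_{k_n}$. Since $0$ and $E$ belong to the collection they must occur as $U_{k_0}$ and $U_{k_n}$, and after removing repetitions (if any $U_{k_i} = U_{k_{i+1}}$) one obtains a genuine filtration whose set of terms is exactly the original collection.

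For the second part, the forward direction is again immediate: if a common refinement $\{V_m\}$ exists, then every $W_i$ and every $U_k$ is one of the $V_m$, and the $V_m$ form a chain, so any pair $(W_i,U_k)$ is comparable. For the converse, I would simply form the union $\cZ := \{W_i\}_{i\in I} \cup \{U_k\}_{k\in K}$ and check that it satisfies the pairwise comparability criterion of part 1. Any two members of $\{W_i\}$ are comparable because $\{W_i\}$ is itself a filtration, and likewise for $\{U_k\}$; the hypothesis of part 2 handles the mixed pairs. Since $0$ and $E$ lie in each of the two original filtrations, they lie in $\cZ$. Applying part 1 to $\cZ$ then produces a filtration whose set of terms is $\cZ$, which by construction contains every $W_i$ and every $U_k$ as a term and hence is a common refinement of both.

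I do not anticipate a serious obstacle in this argument; it is essentially a fact about chains in the poset of strict subbundles, the only mildly technical point being to confirm that equality $U_j = U_k$ for two subbundles in the collection (which could arise from the non-strict version of the comparability hypothesis) can be absorbed by thinning out the collection without changing its set of members. Since the collection is finite and the relation is transitive, sorting is the only algorithmic step needed.
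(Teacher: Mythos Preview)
Your proposal is correct and follows essentially the same approach as the paper: both treat the first part as the observation that pairwise comparability gives a total order, and both deduce the second part by forming the union of the two filtrations and applying the first part to it. Your discussion of thinning out repeated terms is a minor elaboration the paper leaves implicit, but there is no substantive difference in strategy.
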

\begin{proof}
We prove the first part. If the collection corresponds to a filtration then it obviously satisfies 
the criterion. Suppose given a collection of strict subbundles satisfying the criterion.  
The relation $i\leq j \Leftrightarrow U_i \subset U_j$ induces a total order on $K$, and with respect to this total order the 
collection is a filtration.

Now the second part of the lemma follows immediately from the first: the two filtrations admit a common refinement if and only if
the union of the two collections satisfies the criterion of the first part. Given that $\{ W_i \} _{i\in I}$ and $\{ U_k \} _{k\in K}$
are already supposed to be filtrations, they already satisfy the criterion separately. The only other case is when $i\in I$ and $k\in K$ 
which is precisely the criterion of this part. 
\end{proof}

\begin{corollary}
\label{multiplerefine}
Suppose $E$ is a bundle with $N$ filtrations, every two of which admit a common refinement. Then 
the $N$ filtrations admit a common refinement. Furthermore there exists a common refinement in which each component bundle
comes from at least one of the original filtrations. 
\end{corollary}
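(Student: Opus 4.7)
The plan is to reduce everything to the criterion in Lemma \ref{commonrefine} by taking the union of all the subbundles appearing across the $N$ filtrations. Concretely, write the filtrations as $\{W^{(\alpha)}_i\}_{i \in I_\alpha}$ for $\alpha = 1, \ldots, N$, and let $\mathcal{U} := \bigcup_{\alpha=1}^N \{W^{(\alpha)}_i\}_{i \in I_\alpha}$ be the collection of all subbundles occurring in any one of these filtrations. The goal is to show that $\mathcal{U}$ itself satisfies the criterion of the first part of Lemma \ref{commonrefine}, so that it is the collection of subbundles of a single filtration of $E$; that filtration will automatically refine each $W^{(\alpha)}$ and will, by construction, have all its component subbundles coming from one of the originals.

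The verification splits into two cases. If we pick two subbundles $U, V \in \mathcal{U}$ that both come from the same filtration $W^{(\alpha)}$, then comparability ($U \subset V$ or $V \subset U$) is immediate since $W^{(\alpha)}$ is already a filtration. If instead $U = W^{(\alpha)}_i$ and $V = W^{(\beta)}_k$ with $\alpha \neq \beta$, we invoke the hypothesis that the filtrations $W^{(\alpha)}$ and $W^{(\beta)}$ admit a common refinement; by the second part of Lemma \ref{commonrefine} this forces exactly the comparability $U \subset V$ or $V \subset U$. Thus $\mathcal{U}$ satisfies the criterion of the first part of Lemma \ref{commonrefine}, and so is the underlying set of subbundles of a filtration of $E$.

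This filtration is the common refinement we seek: for each $\alpha$ and each $i \in I_\alpha$ the bundle $W^{(\alpha)}_i$ is by construction one of the bundles in the refined filtration, so the refined filtration restricts to $W^{(\alpha)}$ after selecting the appropriate indices. The ``furthermore'' clause is also automatic, since every component bundle of the common refinement is, by definition of $\mathcal{U}$, some $W^{(\alpha)}_i$ for at least one $\alpha$.

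There is essentially no obstacle here beyond invoking Lemma \ref{commonrefine} twice; the statement is a purely combinatorial consequence of the pairwise comparability criterion, and no analytic input is required. The only mildly delicate point is conceptual: one must resist the temptation to build the refinement inductively (refining $W^{(1)}$ against $W^{(2)}$, then the result against $W^{(3)}$, etc.), which would also work but requires checking at each stage that the intermediate refinement still admits a common refinement with the next filtration. Taking the union all at once and applying Lemma \ref{commonrefine} directly is cleaner and yields the stronger ``component bundles come from one of the originals'' statement for free.
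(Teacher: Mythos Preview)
Your proof is correct and follows exactly the same approach as the paper: take the union of all subbundles from the $N$ filtrations and verify the pairwise comparability criterion of Lemma \ref{commonrefine}, noting that this criterion only involves two bundles at a time. The paper's proof is a two-sentence sketch of precisely this argument; you have simply spelled out the details more fully.
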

\begin{proof}
The union of the three collections satisfies the criterion of the first part of Lemma \ref{commonrefine}, since that criterion 
only makes reference to two indices at at time. This union satisfies the condition in the last sentence. 
\end{proof}

A {\em refinement} of a pre-patching collection is a refinement $\tilde{V}_j$ of the open covering, 
with index set $J$ mapping to the
original index set $I$ by a map denoted $j\mapsto i(j)$, 
and open subsets $\tilde{V}_j \subset V_{i(j)}$ such that the $\tilde{V}_j$ still cover $X$.
Plus, on each $\tilde{V}_j$ a filtration $\tilde{W}^j$ of $E|_{\tilde{V}_j}$ which is 
a refinement of the restriction of $W^{i(j)}$
to $\tilde{V}_j$. Finally we assume that over $\tilde{V}_j$ the connection 
$\nabla _{i(j),{\rm Gr}}$ on ${\rm Gr}^{W^i}(E)$ 
preserves the induced filtration ${\rm Gr}^{W^{i(j)}}(\tilde{W}^j_{\cdot})$ and the refined connection 
$\tilde{\nabla}_{j,{\rm Gr}}$ is the connection which is induced by $\nabla _{i(j),{\rm Gr}}$ 
on the associated-graded  ${\rm Gr}^{\tilde{W}_j}(E)$. 

\begin{lemma}
Suppose $\nabla$ is a patched connection compatible with a pre-patching collection 
 $(X,E,\{ (V_i,W^i, \nabla _{i, {\rm Gr}})\} )$, and suppose
$(X,E,\{ (\tilde{V}_j,\tilde{W}^j, \tilde{\nabla} _{j, {\rm Gr}})\} )$ is a refinement for $j\mapsto i(j)$. 
Then $\nabla$ is also a patched connection compatible with $(X,E,\{ (\tilde{V}_j,\tilde{W}^j, \tilde{\nabla} _{j, {\rm Gr}})\} )$.
\end{lemma}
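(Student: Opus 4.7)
The claim is essentially a compatibility check at the level of definitions, so the plan is to verify the two requirements for being a patched connection: on each $\tilde{V}_j$, we must show (i) that $\nabla$ preserves the refined filtration $\tilde{W}^j$, and (ii) that the connection induced by $\nabla$ on $\mathrm{Gr}^{\tilde{W}^j}(E|_{\tilde{V}_j})$ coincides with the prescribed $\tilde{\nabla}_{j,{\rm Gr}}$. Both will follow from the transitivity of the ``take-associated-graded'' operation under a refinement of filtrations.

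For (i), fix a piece $\tilde{W}^j_k$. Since $\tilde{W}^j$ refines $W^{i(j)}|_{\tilde{V}_j}$, there is a (minimal) $p$ such that $W^{i(j)}_p \subset \tilde{W}^j_k \subset W^{i(j)}_{p+1}$. Let $s$ be a local section of $\tilde{W}^j_k$. Because $\nabla$ is compatible with the original pre-patching collection on $V_{i(j)}\supset \tilde{V}_j$, we have $\nabla s \in A^1(\tilde{V}_j,W^{i(j)}_{p+1})$, and the image of $\nabla s$ in $A^1(\tilde{V}_j, \mathrm{Gr}^{W^{i(j)}}_{p+1}(E))$ is exactly $\nabla_{i(j),{\rm Gr}}(\bar s)$, where $\bar s$ is the image of $s$ in $\tilde{W}^j_k/W^{i(j)}_p$. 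By the very definition of a refinement, $\nabla_{i(j),{\rm Gr}}$ preserves the sub-filtration $\mathrm{Gr}^{W^{i(j)}}(\tilde{W}^j_\cdot)$, so this image lies in $A^1(\tilde{V}_j,\tilde{W}^j_k/W^{i(j)}_p)$. Lifting back, $\nabla s \in A^1(\tilde{V}_j, W^{i(j)}_p + \tilde{W}^j_k) = A^1(\tilde{V}_j,\tilde{W}^j_k)$, which is what we wanted.

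For (ii), the induced connection on $\mathrm{Gr}^{\tilde{W}^j}(E)$ is computed tautologically in two steps. There is a canonical isomorphism of graded bundles
$$
\mathrm{Gr}^{\tilde{W}^j}(E|_{\tilde{V}_j}) \;\cong\; \mathrm{Gr}^{\,\mathrm{Gr}^{W^{i(j)}}(\tilde{W}^j)}\!\bigl(\mathrm{Gr}^{W^{i(j)}}(E|_{\tilde{V}_j})\bigr),
$$
and under this identification, the connection induced by $\nabla$ on the left-hand side equals the connection obtained by first inducing $\nabla_{i(j),{\rm Gr}}$ on $\mathrm{Gr}^{W^{i(j)}}(E)$ and then inducing further on the associated graded for $\mathrm{Gr}^{W^{i(j)}}(\tilde{W}^j)$. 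This is immediate by chasing the lift-then-project definition through both stages: lifting $\bar s$ from $\tilde{W}^j_k/\tilde{W}^j_{k-1}$ to $\tilde{W}^j_k$, applying $\nabla$, and projecting to $\tilde{W}^j_k/\tilde{W}^j_{k-1}$ is the same as doing this in two passes through $W^{i(j)}_{p+1}/W^{i(j)}_p$. The right-hand side of the display above is, by the definition of the refined data, precisely $\tilde{\nabla}_{j,{\rm Gr}}$, so (ii) holds.

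There is no real obstacle here; the only thing worth being careful about is keeping the two layers of filtration straight in the graded identification used for (ii). Once that canonical isomorphism is in hand, both conditions for $\nabla$ to be compatible with the refined pre-patching collection are literal consequences of the compatibility with the original one, together with the refinement hypothesis that $\nabla_{i(j),{\rm Gr}}$ preserves $\mathrm{Gr}^{W^{i(j)}}(\tilde{W}^j_\cdot)$ and induces $\tilde{\nabla}_{j,{\rm Gr}}$ on its associated graded.
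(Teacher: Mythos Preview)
Your proof is correct and follows essentially the same approach as the paper's own argument. The paper's proof is a terse one-paragraph sketch of exactly the two points you verify---that $\nabla$ preserves $\tilde{W}^j$ because $\nabla_{i(j),{\rm Gr}}$ preserves the induced filtration on $\mathrm{Gr}^{W^{i(j)}}(E)$, and that $\nabla$ induces $\tilde{\nabla}_{j,{\rm Gr}}$ by the two-step transitivity of taking associated gradeds---while you have spelled out the sandwich $W^{i(j)}_p\subset\tilde{W}^j_k\subset W^{i(j)}_{p+1}$ and the lift-then-project argument explicitly.
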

\begin{proof}
The connection $\nabla$ induces on ${\rm Gr}^{W^i}(E)$ the given connection $\nabla _{i,{\rm Gr}}$.
By the definition of refinement, this connection in turn preserves the 
induced filtration ${\rm Gr}^{W^{i(j)}}(\tilde{W}^j_{\cdot})$. It follows that 
$\nabla$ preserves $\tilde{W}^j$. Furthermore, $\nabla _{i,{\rm Gr}}$ induces
on ${\rm Gr}^{\tilde{W}_j}(E)$ the connection $\tilde{\nabla}_{j,{\rm Gr}}$ in the data of the
refinement, and since $\nabla$ induced $\nabla _{i,{\rm Gr}}$  it follows that $\nabla$
induces $\tilde{\nabla}_{j,{\rm Gr}}$ on ${\rm Gr}^{\tilde{W}_j}(E)$.
\end{proof}

\begin{corollary}
\label{commonrefinement}
If two patching collections 
$$
(X,E,\{ (V_i,W^i, \nabla _{i, {\rm Gr}})\} ) \m{  and } 
(X,E,\{ (\tilde{V}_j,\tilde{W}^j, \tilde{\nabla} _{j, {\rm Gr}})\} )
$$ 
admit a common refinement, then 
$$
\widehat{c_p}(X,E,\{ (V_i,W^i, \nabla _{i, {\rm Gr}})\} ) = 
\widehat{c_p}(X,E,\{ (\tilde{V}_j,\tilde{W}^j, \tilde{\nabla} _{j, {\rm Gr}})\} ). 
$$
\end{corollary}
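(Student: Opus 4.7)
The plan is to reduce to Lemma \ref{nilinv} by passing to the common refinement. Since the class $\widehat{c_p}$ of a patching collection is defined as the Chern-Simons class of any compatible patched connection (this being well-defined by Lemma \ref{nilinv}), it suffices to exhibit a single connection $\nabla$ on $E$ that is simultaneously compatible with both patching collections, and then invoke Lemma \ref{nilinv} applied to the common refinement.

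Concretely, let $(X,E,\{(\hat{V}_\ell, \hat{W}^\ell, \hat{\nabla}_{\ell,{\rm Gr}})\})$ be a common refinement of the two given patching collections. First I pick any patched connection $\nabla_0$ compatible with $(V_i,W^i,\nabla_{i,{\rm Gr}})$, which exists by the definition of a patching collection. By the preceding Lemma (the one just before Corollary \ref{commonrefinement}), $\nabla_0$ is also compatible with the refinement $(\hat{V}_\ell,\hat{W}^\ell,\hat{\nabla}_{\ell,{\rm Gr}})$. Similarly, pick $\nabla_1$ compatible with $(\tilde{V}_j,\tilde{W}^j,\tilde{\nabla}_{j,{\rm Gr}})$; again by the same lemma $\nabla_1$ is compatible with the common refinement.

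Now apply Lemma \ref{nilinv} to the pre-patching collection given by the common refinement: both $\nabla_0$ and $\nabla_1$ are compatible connections for it, so their Chern-Simons classes in $H^{2p-1}(X,\comx/\Z)$ coincide. By the definition of $\widehat{c_p}$ for a patching collection, this common value equals both
\[
\widehat{c_p}\bigl(X,E,\{(V_i,W^i,\nabla_{i,{\rm Gr}})\}\bigr) \;=\; \widehat{c_p}(E,\nabla_0)
\]
and
\[
\widehat{c_p}\bigl(X,E,\{(\tilde{V}_j,\tilde{W}^j,\tilde{\nabla}_{j,{\rm Gr}})\}\bigr) \;=\; \widehat{c_p}(E,\nabla_1),
\]
yielding the claimed equality.

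There is no real obstacle here: all of the work has already been done in the previous lemma (compatibility is preserved under refinement) and in Lemma \ref{nilinv} (Chern-Simons class is independent of the choice of compatible connection, via the variational formula combined with the vanishing of $\cP(\nabla'_t,\Omega_t,\ldots,\Omega_t)$ coming from strict upper-triangularity). The only thing to verify is the bookkeeping step that a compatible connection for one collection remains compatible after refinement, which is exactly the content of the preceding lemma.
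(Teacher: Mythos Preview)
Your proof is correct and follows essentially the same approach as the paper: pick compatible connections $\nabla_0$, $\nabla_1$ for the two patching collections, use the preceding lemma to see they are both compatible with the common refinement, and apply Lemma~\ref{nilinv} there. (Your opening sentence about exhibiting ``a single connection $\nabla$ \ldots\ simultaneously compatible with both'' does not quite describe what you then actually do, but the argument from ``Concretely'' onward is exactly right and matches the paper.)
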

\begin{proof}
Let $\nabla$ and $\tilde{\nabla}$ denote compatible connections for the two patching 
collections. By the previous lemma, they are both compatible with the common refined patching collection.
By Lemma \ref{nilinv} applied to the refinement, 
$\widehat{c_p}(E,\nabla  ) =  \widehat{c_p}(E,\tilde{\nabla} )$. 
But $\widehat{c_p}(E,\nabla  )$ and $\widehat{c_p}(E,\tilde{\nabla} )$ are respectively 
ways of calculating 
$\widehat{c_p}(X,E,\{ (V_i,W^i, \nabla _{i, {\rm Gr}})\} )$ and 
$\widehat{c_p}(X,E,\{ (\tilde{V}_j,\tilde{W}^j, \tilde{\nabla} _{j, {\rm Gr}})\} )$,
so these last two are equal.  
\end{proof}


\subsection{Construction of a patched connection}


Suppose we have a pre-patching collection $(X,E,\{ (V_i,W^i, \nabla _{i, {\rm Gr}})\} )$. In order to construct a compatible
connection, we need the following compatibility condition on the intersections $V_i\cap V_j$.

\begin{condition}
\label{patchcomp}
We say that the pre-patching collection satisfies the {\em patching compatibility condition} if for any point $x\in V_i\cap V_j$
there is a neighborhood $V'_x$ of $x$ and a common refinement $\tilde{W}^x$ of both filtrations $W^i$ and $W^j$ on $V'_x$,
consisting of bundles coming from these filtrations,
such that the connections $\nabla _{i,{\rm Gr}}$ and  $\nabla _{j,{\rm Gr}}$ both preserve the filtrations induced by
$\tilde{W}^x$ on the respective associated graded bundles $Gr^{W^i}(E|_{V'_x})$ and $Gr^{W^j}(E|_{V'_x})$. Furthermore we
require that the induced connections on $Gr^{\tilde{W}^x}(E|_{V'_x})$ be the same. 
\end{condition}

\begin{lemma}
\label{commonrefinement2}
Suppose  $(X,E,\{ (V_i,W^i, \nabla _{i, {\rm Gr}})\} )$ is a pre-patching collection which
satisfies the criterion \ref{patchcomp}.
Then for any point $x$ lying in several open sets $V_{i_1}, \ldots , V_{i_N}$, there is a smaller neighborhood
$x\in V''_x\subset V_{i_1}\cap \cdots \cap V_{i_N}$ and a common refinement $U^x$ of all of the filtrations $W^{i_j}$, $j=1,\ldots , N$
on $E|_{V''_x}$, such that the induced filtrations on any of the associated graded pieces ${\rm Gr}^{W^{i_j}}(E|_{V''_x})$ are
preserved by the connections $\nabla _{i_j,{\rm Gr}}$, and the connections all induce the same connection on 
the associated graded of the common refined filtration $U^x$. 
\end{lemma}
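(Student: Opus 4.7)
The plan is to proceed by combining the pairwise compatibility granted by Condition \ref{patchcomp} with the combinatorial refinement result Corollary \ref{multiplerefine}. Roughly: pairwise common refinements exist with good properties; Corollary \ref{multiplerefine} upgrades this to an $N$-fold common refinement whose components all come from one of the original filtrations; and pairwise compatibility of the connections then propagates to the $N$-fold refinement.

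First I would apply Condition \ref{patchcomp} to each pair $(i_j,i_k)$ to obtain an open neighborhood $V'_{x,j,k}$ of $x$ and a common refinement $\tilde{W}^{x,j,k}$ of $W^{i_j}$ and $W^{i_k}$ on that neighborhood, with the property that each $\nabla_{i_j,{\rm Gr}}$ preserves the induced filtration on $\mathrm{Gr}^{W^{i_j}}(E|_{V'_{x,j,k}})$, and that the induced connections on $\mathrm{Gr}^{\tilde{W}^{x,j,k}}(E|_{V'_{x,j,k}})$ agree. Then set $V''_x := \bigcap_{j<k} V'_{x,j,k}$, which is still an open neighborhood of $x$ contained in $V_{i_1}\cap\cdots\cap V_{i_N}$.

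Next, restricting everything to $V''_x$, I would verify that the $N$ filtrations $W^{i_1},\ldots,W^{i_N}$ admit a common refinement. By Lemma \ref{commonrefine}, it suffices to check pairwise the criterion that for any subbundle from $W^{i_j}$ and any from $W^{i_k}$ one is contained in the other. But the existence of the pairwise common refinement $\tilde{W}^{x,j,k}$ consisting of subbundles drawn from $W^{i_j}$ and $W^{i_k}$ gives exactly this condition (again via Lemma \ref{commonrefine} applied to the pair). By Corollary \ref{multiplerefine}, the $N$ filtrations thus admit a common refinement $U^x$ on $V''_x$ in which every component bundle coincides with some bundle from one of the $W^{i_k}$'s.

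It remains to check the two compatibility assertions. For the preservation property, the induced filtration on $\mathrm{Gr}^{W^{i_j}}(E|_{V''_x})$ is built from images of various $U^x$-components, each of which is itself a $W^{i_k}$-component for some $k$. By Condition \ref{patchcomp} applied to the pair $(i_j,i_k)$, the filtration on $\mathrm{Gr}^{W^{i_j}}$ induced by the pairwise refinement $\tilde{W}^{x,j,k}$ is preserved by $\nabla_{i_j,{\rm Gr}}$; since this pairwise refinement contains all the $W^{i_k}$-components, each such image is preserved, so the full induced filtration is preserved. For the equality of induced connections on $\mathrm{Gr}^{U^x}$, observe that $U^x$ refines $\tilde{W}^{x,j,k}$ for every pair $(j,k)$ (because it contains every bundle of $W^{i_j}$ and $W^{i_k}$, hence every bundle of $\tilde{W}^{x,j,k}$). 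Condition \ref{patchcomp} gives that $\nabla_{i_j,{\rm Gr}}$ and $\nabla_{i_k,{\rm Gr}}$ induce the same connection on $\mathrm{Gr}^{\tilde{W}^{x,j,k}}$, and passing to the further refinement $U^x$ preserves this equality; since this holds for every pair, all $N$ induced connections on $\mathrm{Gr}^{U^x}$ agree.

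The only real subtlety, and hence the step I would check with most care, is the bookkeeping in the third paragraph: ensuring that when $U^x$ is built from subbundles drawn from various $W^{i_k}$'s rather than from a single pairwise refinement, the preservation and connection-equality properties granted pairwise by Condition \ref{patchcomp} actually transfer to the $N$-fold setting. The key observation that trivializes this is that $U^x$ automatically refines each $\tilde{W}^{x,j,k}$, so everything one needs follows by passing pairwise statements through further refinement.
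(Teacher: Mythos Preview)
Your proof is correct and follows essentially the same approach as the paper: apply Condition \ref{patchcomp} pairwise, use Corollary \ref{multiplerefine} to obtain an $N$-fold refinement $U^x$ whose components are drawn from the original filtrations, then deduce preservation and equality of induced connections from the pairwise statements. The only cosmetic differences are that you build $V''_x$ explicitly as a finite intersection of the pairwise neighborhoods, and for the equality of connections on $\mathrm{Gr}^{U^x}$ you argue by passing the pairwise agreement on $\mathrm{Gr}^{\tilde{W}^{x,j,k}}$ down through the refinement $U^x$, whereas the paper fixes a single graded piece $U^x_a/U^x_{a-1}$, identifies it as a subquotient of a piece $G$ of $\mathrm{Gr}^{\tilde{W}^{x,j,k}}$, and checks directly that both connections agree there.
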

\begin{proof}
Fix $x\in V_{i_1}\cap \cdots \cap V_{i_N}$.
Choose any neighborhood of $x$ contained in the intersection. The filtrations $W^{i_j}$, $j=1,\ldots , N$ admit pairwise common refinements
by Condition \ref{patchcomp}. Therefore by Corollary \ref{multiplerefine}, they admit a single refinement $U^x$ common to all, and furthermore
the component bundles $U^x_a$ are taken from among the component bundles of the different $W^{i_j}$.

Now, on an associated-graded piece ${\rm Gr}^{W^{i_j}}(E|_{V''_x})$ consider one of the bundles in the induced filtration
${\rm Gr}^{W^{i_j}}(U^x_a)$. This comes from another filtration, so it is equal to some ${\rm Gr}^{W^{i_j}}(W^{i_{\ell}}_b)$. Then Condition \ref{patchcomp}
says that this bundle is preserved by the connection $\nabla _{i_j,{\rm Gr}}$. This shows the next to last phrase.

Finally, choose some associated-graded piece $U^x_a / U^x_{a-1}$, and two other indices $i_j$ and $i_{\ell}$.
There is an index $b$ such that
$$
W^{i_j}_{b-1} \subset U^x_{a-1} \subset U^x_{a} \subset W^{i_j}_{b}.
$$
Similarly there is an index $c$ such that
$$
W^{i_{\ell}}_{c-1} \subset U^x_{a-1} \subset U^x_{a} \subset W^{i_{\ell}}_{c}.
$$
Now $U^x_a / U^x_{a-1}$ is a subquotient of one of the terms $G$ in the associated-graded for the common refinement of
$W^{i_j}$ and $W^{i_{\ell}}$. The connections $\nabla _{i_j,{\rm Gr}}$ and $\nabla _{i_{\ell},{\rm Gr}}$
define the same connection on $G$, and both of them preserve the subbundles of $G$ corresponding to 
$U^x_{a-1}$ and $U^x_{a}$. Hence they induce the same connection on $U^x_a / U^x_{a-1}$. This proves the last phrase.
\end{proof}

\begin{theorem}
\label{patchconstr}
Suppose 
$(X,E,\{ (V_i,W^i, \nabla _{i, {\rm Gr}})\} )$ is a pre-patching collection which satisfies the above patching compatibility
condition \ref{patchcomp}. Then it has a refinement which is a patching collection, that is to say there exists a compatible patched connection
for a refined pre-patching collection. 
\end{theorem}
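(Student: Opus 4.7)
The strategy combines Lemma \ref{commonrefinement2} with a partition-of-unity construction exploiting the following affine-subspace observation: given a filtration $F$ of $E$ and a graded connection $\nabla_F$ on ${\rm Gr}^F(E)$, the set of connections on $E$ that preserve $F$ and induce $\nabla_F$ is either empty or an affine subspace (two such connections differ by an $End(E)$-valued $1$-form that is strictly $F$-upper triangular). In particular, for any $C^\infty$ family $\{\nabla_y\}$ of such connections and any partition of unity $\{\phi_y\}$ with $\sum_y \phi_y = 1$, the combination $\sum_y \phi_y \nabla_y$ is again a connection preserving $F$ and inducing $\nabla_F$.

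\textbf{Building the refinement.}
At each $x \in X$, apply Lemma \ref{commonrefinement2} to obtain a neighborhood $V''_x \subset \bigcap_{x \in V_i} V_i$, a filtration $U^x$ of $E|_{V''_x}$ refining each $W^i|_{V''_x}$ with components drawn from the $W^i_k$, and a common induced connection $\nabla^x_{\rm Gr}$ on ${\rm Gr}^{U^x}(E)$. Using paracompactness of $X$, pass to a locally finite subcover $\{V''_\alpha\}_{\alpha\in A}$ with data $(U^\alpha,\nabla^\alpha_{\rm Gr})$ and a choice of index map $\alpha \mapsto i(\alpha)$ such that $V''_\alpha \subset V_{i(\alpha)}$. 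This constitutes the desired refined pre-patching collection.

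\textbf{Constructing the patched connection.}
For each $y \in X$, the finite family $\{U^\alpha\}_{y \in V''_\alpha}$ together with the original $\{W^i\}_{y \in V_i}$ satisfies the pairwise nesting criterion of Lemma \ref{commonrefinement} (by Condition \ref{patchcomp}), so a common refinement $U^y$ of all these filtrations exists on a neighborhood $V^\ast_y \subset \bigcap_{y \in V''_\alpha} V''_\alpha$, and the $\nabla^\alpha_{\rm Gr}$'s descend to a single connection $\nabla^y_{\rm Gr}$ on ${\rm Gr}^{U^y}(E)$. Pick a $C^\infty$-splitting of $U^y$ on $V^\ast_y$ (which exists since the filtration consists of smooth subbundles) and define $\nabla_y$ to be the connection induced by $\nabla^y_{\rm Gr}$ via this splitting; by construction $\nabla_y$ preserves every $U^\alpha$ with $y \in V''_\alpha$ on $V^\ast_y$ and induces the prescribed $\nabla^\alpha_{\rm Gr}$. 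Choose a partition of unity $\{\phi_y\}$ subordinate to $\{V^\ast_y\}$ and set
\[
\nabla := \sum_y \phi_y \,\nabla_y.
\]
By the affine-subspace observation of the first paragraph, $\nabla$ preserves $U^\alpha$ on $V''_\alpha$ and induces $\nabla^\alpha_{\rm Gr}$ for every $\alpha$, hence is a compatible patched connection for the refined pre-patching collection.

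\textbf{Main obstacle.}
The only technical point is the second application of Lemma \ref{commonrefinement2} at stage three: one must verify that the enlarged family of filtrations still admits a common refinement with a single compatible graded connection. This hinges on the fact that the components of each $U^\alpha$ lie among the original $W^i_k$, so the pairwise nesting criterion for the enlarged family reduces to pairwise nesting among the $W^i_k$ themselves, which Condition \ref{patchcomp} and the first half of Lemma \ref{commonrefinement} together provide. Once this combinatorial point is verified, the remainder of the proof is a standard partition-of-unity argument combined with the affine-subspace observation.
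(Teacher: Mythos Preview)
There is a genuine gap in your third step, specifically in the claim that $\nabla_y$ ``induces the prescribed $\nabla^\alpha_{\rm Gr}$'' on ${\rm Gr}^{U^\alpha}(E)$. Your connection $\nabla_y$ is obtained by splitting the \emph{finest} available filtration $U^y$ and transporting the fully-graded connection $\nabla^y_{\rm Gr}$; consequently $\nabla_y$ is block-diagonal with respect to the decomposition coming from $U^y$. When you pass to the coarser associated-graded ${\rm Gr}^{U^\alpha}(E)$, you therefore get a connection which is still block-diagonal with respect to the finer blocks induced by $U^y$. But $\nabla^\alpha_{\rm Gr}$ need not be block-diagonal in this sense: it is only required to be flat and to \emph{induce} $\nabla^y_{\rm Gr}$ on the further refinement, which allows nontrivial strictly-upper-triangular pieces. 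Concretely, take $E$ of rank $3$ with $U^\alpha = (0\subset \langle e_1\rangle \subset E)$ and $\nabla^\alpha_{\rm Gr}$ acting on $\langle e_2,e_3\rangle$ by $e_3\mapsto \omega\otimes e_2$ for a closed $1$-form $\omega$; if $U^y$ further refines by inserting $\langle e_1,e_2\rangle$, then $\nabla^y_{\rm Gr}$ is trivial on all three lines, your $\nabla_y$ is the trivial connection, and it induces the trivial connection on $\langle e_2,e_3\rangle$ rather than the prescribed $\nabla^\alpha_{\rm Gr}$. This configuration is compatible with Condition \ref{patchcomp} (take $W^i=U^\alpha$ and $W^j=(0\subset\langle e_1,e_2\rangle\subset E)$), so the obstruction is real.

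The paper's proof avoids this by reversing the order of operations: it first chooses on each original $V_i$ a connection $\nabla_i$ lifting the given $\nabla_{i,{\rm Gr}}$ on ${\rm Gr}^{W^i}(E)$ (via a splitting of the \emph{coarse} filtration $W^i$, which retains the off-diagonal data in $\nabla_{i,{\rm Gr}}$), forms $\nabla^{\#}=\sum_i\zeta_i\nabla_i$ with a partition of unity subordinate to the \emph{original} cover, and only afterwards invokes Lemma \ref{commonrefinement2} to produce the refined collection $\{(V''_x,U^x,\cdot)\}$ and verify compatibility. The point is that each $\nabla_i$, being compatible with $W^i$, automatically preserves any $U^x_a$ sandwiched between consecutive $W^i$-steps (because $\nabla_{i,{\rm Gr}}$ preserves its image there), and on ${\rm Gr}^{U^x}$ the induced connection is determined by $\nabla_{i,{\rm Gr}}$ alone, hence agrees for all $i$ by Lemma \ref{commonrefinement2}. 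Your scheme of building local connections from the finest filtration cannot be repaired without essentially reverting to this order.
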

\begin{proof}
To begin, we can choose over each $V_i$ a connection $\nabla _i$ on $E|_{V_i}$ such that $\nabla _i$ preserves
the filtration $W^i$ and induces the connection $\nabla _{i,{\rm Gr}}$ on the associated-graded. One way to do this
for example is to choose a $\cC ^{\infty}$ hermitian metric on $E$ which induces a splitting 
$$
{\rm Gr}^{W_i}(E|_{V_i})\cong E|_{V_i},
$$
and use this isomorphism to
transport the connection $\nabla _{i,{\rm Gr}}$.

Choose a partition of unity $1= \sum _i \zeta _i$ with ${\rm Supp}(\zeta _i)$ relatively compact 
in $U_i$. Consider the {\em patched connection}
$$
\nabla ^{\#}:= \sum _i \zeta _i \nabla _i .
$$
It is well-defined as a $\cC^{\infty}$ operator $E\rightarrow A^1(E)$ (where $A^{\cdot}$ 
denotes the differential forms on $X$), 
because the $\zeta _i$ are compactly supported in the open set $U_i$ of
definition of $\nabla _i$. Furthermore, it is a connection operator, that is it satisfies Leibniz' rule:
$$
\nabla ^{\#}(a e) = \sum _i \zeta  _i \nabla _i (a e) = \sum _i a \zeta _i \nabla _i (e) + 
( \sum _i \zeta _i d(a))  e = a\nabla ^{\#}(e) + d(a) e
$$
using $\sum _i \zeta _i = 1$. 

We would now like to consider compatibility of $\nabla ^{\#}$ with the filtrations. Choose $x\in X$. 
Let $i_1,\ldots , i_N$ be the indices
for which $x$ is contained in $Supp (\zeta _{i_j})$. Choose a neighborhood $V''_x$ as in the situation 
of Lemma \ref{commonrefinement},
contained in $V_{i_1}\cap \cdots \cap V_{i_N}$ but not meeting the support of any $\zeta _j$ for $j$ 
not in $\{ i_1,\ldots , i_N\}$. 
Let $U^x$ be the common refinement of the filtrations $W^{i_j}$ given by Lemma \ref{commonrefinement}. 

Each of the connections $\nabla _{i_j}$ preserves every $U^x_a$. Indeed, $U^x_a$ is sandwiched between 
$W^{i_j}_{b-1}$ and $W^{i_j}_{b}$,
and $\nabla _{i_j}$ induces the connection $\nabla _{i_j,{\rm Gr}}$ on $W^{i_j}_{b} / W^{i_j}_{b-1}$. 
By hypothesis, and Lemma \ref{commonrefinement},
the connection $\nabla _{i_j,{\rm Gr}}$ preserves the image of $U^x_a$ in $W^{i_j}_{b} / W^{i_j}_{b-1}$, 
therefore $\nabla _{i_j}$
preserves $U^x_a$. 

Furthermore, the connections $\nabla _{i_j}$ all induce the same connection on 
$U^x_a/ U^x_{a-1}$, as follows from the same statement for
the connections $\nabla _{i_j,{\rm Gr}}$ in Lemma \ref{commonrefinement}. 

The neighborhoods $V''_x$ cover $X$. Together with the filtrations $U^x$ and the 
connections induced by
any of the $\nabla _ {i_j}$ on ${\rm Gr}^{U^x}(E|_{V''_x})$ this gives a pre-patching 
collection refining the original one. 

The connection $\nabla ^{\#}$ is compatible with this new pre-patching collection. Indeed, it is a sum of terms $\nabla _i$ and
on any open set $V''_x$ the only terms which come into play are the $\nabla _{i_j}$ which preserve the filtration $U^x$
and induce the given connections on ${\rm Gr}^{U^x}(E|_{V''_x})$. 
This $\nabla ^{\#}$ preserves the filtration $U^x$. 
By the partition of unity condition 
$\sum \zeta _{i_j}= 1$ on
$V'' _x$, the patched connection $\nabla ^{\#}$ induces the given connection on each ${\rm Gr}^{U^x}(E|_{V''_x})$.
\end{proof}


\subsection{The patched connection for a representation unipotent along a smooth divisor}
\label{patchsmoothdiv}


If we have tried to be somewhat general in the previous presentation, 
we only use the construction of the patched connection
in the simplest case. 
Suppose $X$ is a smooth variety and $D\subset X$ is a closed smooth irreducible divisor. Choose the basepoint 
$x\in X-D$ and suppose we have a representation $\rho :  \pi _1(X-D,x)\rightarrow GL_r(\comx )$. 

Let $\gamma$ denote the path going from $x$ out
to a point near $D$, once around counterclockwise, then back to $x$.  
We assume that $\rho$ is {\em unipotent at infinity}, that is to say that the $\rho (\gamma )$ is a unipotent matrix. 

As usual, fix the following two neighborhoods covering $X$. First, $U := X-D$ is the complement of $D$. Then $B$ is a tubular neighborhood of $D$.
Let $B^{\ast}:= U \cap B$, it is the complement of $D$ in $B$ 
otherwise known as the {\em punctured tubular neighborhood}. We have a projection $B\rightarrow D$, making $B$ into a 
disc bundle and $B^{\ast}$ into a punctured-disk bundle over $D$. In terms of the previous notations, the index set is $I= \{ 0,1\}$
and $U_0=U, \; U_1 = B$ with $U_0\cap U_1 = B^{\ast}$.

Let $\ov{E}$ denote the holomorphic vector bundle on $X$ which is the Deligne canonical 
extension of the flat bundle associated to $\rho$.
Let $\nabla$ denote the flat connection on $E$. 
In particualar, $(E,\nabla )$ is the flat bundle over $U$ associated to $\rho$.

Fix the trivial filtration $W^0_0:= E$ and $W^0_{-1}=0$ over the open set $U=U_0$. 
The assocated-graded is the whole bundle $E$ and we take $\nabla _{0,{\rm Gr}}:= \nabla $. 

Recall that a {\em graded-extendable filtration} on $E|_{B^{\ast}}$ is a filtration 
$\{ W_k\}$ by strict $\nabla$-flat subbundles, such that the induced connection $\nabla _{Gr}$ on
$Gr^W(E|_{B^{\ast}})$ extends to a connection over $B$. Note that the $W_k$ extend to strict
subbundles of $\ov{E}|_B$, indeed we take the canonical extension of $W_k$ with respect to the connection
induced by $\nabla$. Hence we are given natural bundles $Gr^W(\ov{E}|_{B})$ and the graded-extendability
condition says that the connection induced by $\nabla$ on these graded bundles, should be nonsingular along $D$.

Examples of such filtrations include the kernel filtration (see Lemma \ref{tube}) 
or the monodromy weight filtration along
$D$, using the hypothesis that $\rho$ is unipotent at infinity. 

On $U_1=B$ let $W^1= \{ W_k\}$ denote some choice of graded-extendable filtration. 

Let $\nabla _{1,{\rm Gr}}$ be the connection induced by $\nabla$ over $B^{\ast}$, 
projected to the associated-graded ${\rm Gr}^{W^i}(E|_{B^{\ast}})$
and then extended from $B^{\ast}$ to a connection on $\ov{E}|_B$,
well-defined over all of $B$. 

The resulting collection of neighborhoods, filtrations and connections on the 
associated-graded's, is a pre-patching collection on $X$. 
 
\begin{lemma}
Suppose $\rho$ is a representation of $\pi _1(U)$ which is unipotent at infinity,
and choose a graded-extendable filtration $W$ on the corresponding flat bundle restricted
to $B^{\ast}$.
The pre-patching collection associated to $(\rho , W)$ by the above discussion
satisfies the compatibility condition \ref{patchcomp}, 
hence by Theorem \ref{patchconstr} it admits a compatible patched connection
denoted $\nabla ^{\#}$. 
\end{lemma}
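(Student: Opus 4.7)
The plan is to verify Condition \ref{patchcomp} directly and then invoke Theorem \ref{patchconstr}. Since the covering of $X$ consists of only two open sets $U_0 = U$ and $U_1 = B$, with $U_0 \cap U_1 = B^{\ast}$, compatibility needs to be checked only at points $x \in B^{\ast}$.

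At such an $x$, the filtration $W^0 = (0 \subset E)$ on $U_0$ is trivial, so any filtration refines it. I would take $V'_x \subset B^{\ast}$ small enough and let $\tilde W^x$ be the restriction of $W^1$ to $V'_x$. Then $\tilde W^x$ is a common refinement of $W^0$ and $W^1$ whose component bundles come from the two given filtrations, so the structural part of Condition \ref{patchcomp} is automatic.

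The three compatibility assertions about connections then read as follows. First, $\nabla _{0,{\rm Gr}} = \nabla$ should preserve the filtration induced by $\tilde W^x = W^1$ on ${\rm Gr}^{W^0}(E|_{V'_x}) = E|_{V'_x}$, which is precisely the graded-extendability hypothesis that the $W_k$ are $\nabla$-flat subbundles on $B^{\ast}$. Second, $\nabla _{1,{\rm Gr}}$ should preserve the induced filtration from $\tilde W^x = W^1$ on ${\rm Gr}^{W^1}(\ov E|_{V'_x})$, which is trivial because each $W_k/W_{k-1}$ carries only the tautological induced filtration. Third, the two connections must induce the same connection on ${\rm Gr}^{\tilde W^x}(E|_{V'_x}) = {\rm Gr}^{W^1}(E|_{V'_x})$; this holds on $B^{\ast}$ because in \S \ref{patchsmoothdiv} the connection $\nabla _{1,{\rm Gr}}$ was defined as exactly the graded of $\nabla$ on $B^{\ast}$, then extended across $D$.

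With Condition \ref{patchcomp} verified, Theorem \ref{patchconstr} produces a refinement of the pre-patching collection that is an actual patching collection, together with a compatible smooth connection $\nabla ^{\#}$, which finishes the proof. I do not expect any genuine obstacle: all the real work was already built into the setup of \S \ref{patchsmoothdiv}, where one of the two open sets carries the trivial filtration and the other's graded connection is defined precisely as the graded of $\nabla$ on the overlap. The content of the lemma is the observation that this setup was arranged exactly so as to fit the general formalism of \S \ref{nil-flat}.
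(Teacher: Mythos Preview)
Your proof is correct and follows the same approach as the paper's own proof, which dismisses the verification in a single sentence (``obvious, since the filtration on $U$ is the trivial filtration so over the intersection $B^{\ast}$ it clearly admits a common refinement with the filtration $\{W_k\}$ on $B$''). You have simply unpacked the details of Condition \ref{patchcomp} that the paper leaves implicit.
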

\begin{proof}
This is obvious, since the filtration on $U$ is the trivial filtration so over the intersection
$B^{\ast}$ it clearly admits a common
refinement with the filtration $\{ W_k\}$ on $B$. 
\end{proof}

Since there are only two open sets and a single intersection, it is easy to write down
explicitly the patched connection $\nabla ^{\#}$ here. Furthermore, there is no
need to refine the pre-patching collection in this case.  

The partition of unity consists of a single function
$\zeta$  supported on $B$ with $1-\zeta$ supported on $U$. We choose a $\cC^{\infty}$
trivialization of the filtration over $B$, $\ov{E}|_B \cong Gr^W(\ov{E}|_B)$. 
Thus $\nabla _{1,{\rm Gr}}= Gr^W(\nabla )$ gets transported to a connection $\nabla _B$ on
$\ov{E}|_B$. Then
$$
\nabla ^{\#} = (1-\zeta )\nabla + \zeta \nabla _B
$$
is a $\cC^{\infty}$ connection on $\ov{E}$ over $X$. Over $B$ it preserves the filtration $W$
and on $Gr^W(\ov{E}|_B)$ it induces the given connection $\nabla _{1,{\rm Gr}}$ which is flat.
Thus, $\nabla ^{\#}$ is locally nil-flat in the easy sense that, over the open set $U'\subset U$ which is
the complement of the support of $\zeta$, it is flat (equal to the original $\nabla$), while
over $B$ it is upper triangular with strictly upper triangular curvature, with respect to the filtration
$W$. We have $X=U'\cup B$.

\begin{corollary}\label{patchedclass}
We obtain secondary classes 
$$
\widehat{c}_p(\rho , W):= \widehat{c}_p(\nabla ^{\#})\in H^{2p-1}(X, \comx / \Z ) 
$$
from the patched connection.
These are independent of the choices of neighborhoods and partitions of unity used to define $\nabla ^{\#}$.
\end{corollary}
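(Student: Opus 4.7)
The plan is to reduce the independence assertion to the two uniqueness results already established, namely Lemma \ref{nilinv} (two patched connections compatible with the same pre-patching collection define the same class) and Corollary \ref{commonrefinement} (two pre-patching collections admitting a common refinement define the same class). There are three sources of choice to eliminate: the partition of unity $\zeta$, the $\cC^{\infty}$ splitting $\ov{E}|_B \cong \mathrm{Gr}^W(\ov{E}|_B)$ used to transport $\nabla_{1,\mathrm{Gr}}$ to a connection on $\ov{E}|_B$, and the tubular neighborhood $B$ itself.

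First I would fix the tubular neighborhood $B$ and the filtration $W$ and vary only the partition of unity and the splitting. For any such choices, the resulting $\nabla^{\#}$ preserves the trivial filtration on $U$, preserves $W$ on $B$, and induces the prescribed flat connection $\nabla_{1,\mathrm{Gr}}$ on $\mathrm{Gr}^W(\ov{E}|_B)$ — precisely because $\sum_i \zeta_i = 1$ guarantees that $\nabla^{\#}$ induces on each associated-graded piece the unique connection that all the local $\nabla_i$ induce there. Hence every such $\nabla^{\#}$ is compatible with the single pre-patching collection $(X, \ov{E}, \{(U, \text{trivial}, \nabla), (B, W, \nabla_{1,\mathrm{Gr}})\})$, and Lemma \ref{nilinv} forces them all to define the same element of $H^{2p-1}(X, \comx/\Z)$.

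Next I would vary the tubular neighborhood. Given two valid choices $B$ and $B'$, pick a third tubular neighborhood $B'' \subset B \cap B'$ of $D$ (shrinking slightly if necessary so that $\{U, B''\}$ still covers $X$). The graded-extendable filtration $W$ on $B^{\ast}$ is canonically determined by $\rho$, extends canonically across $D$ via the Deligne canonical extension, and its restriction to $B''$ is simply the same filtration. Consequently, the pre-patching collection based on $\{U, B''\}$ is literally a refinement (in the sense of \S\ref{patchedconnection}) of the ones based on $\{U, B\}$ and $\{U, B'\}$: the index map sends $B''$ to $B$ (respectively $B'$), the inclusions are satisfied, and the filtrations and graded connections agree under restriction. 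Corollary \ref{commonrefinement} then shows that the classes built from $B$, from $B'$, and from $B''$ all coincide.

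Combining the two steps, an arbitrary admissible triple $(B,\zeta,\text{splitting})$ may be compared to any other such triple $(B',\zeta',\text{splitting}')$ by first passing to a common refinement $B'' \subset B \cap B'$ (which collapses the dependence on the neighborhood by Corollary \ref{commonrefinement}) and then freely adjusting the partition of unity and the splitting (which collapses the remaining dependence by Lemma \ref{nilinv}). I do not expect any serious obstacle here; the only bookkeeping point requiring care is the canonicity of $W$ on $B''$, which is immediate from the definition of graded-extendable filtration together with the uniqueness of the Deligne canonical extension of the flat subbundles $W_k$ across $D$.
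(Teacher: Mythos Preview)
Your proposal is correct and follows essentially the same approach as the paper. The paper's own proof is a single sentence—``Independence of choices follows from Lemma \ref{nilinv}''—and you have simply unpacked what this means: for fixed $B$ all admissible $\nabla^{\#}$ are compatible with the same pre-patching collection (Lemma \ref{nilinv}), and for varying $B$ one passes to a smaller $B''\subset B\cap B'$ and invokes Corollary \ref{commonrefinement}, itself a consequence of Lemma \ref{nilinv}. One small wording quibble: the filtration $W$ is not ``canonically determined by $\rho$'' (it is part of the data at this stage; independence of $W$ comes later in Corollary \ref{indepfilt}); what you need and what you actually use is only that the \emph{chosen} $W$ restricts compatibly to $B''$, which is clear.
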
 
\begin{proof}
It follows directly from Proposition \ref{nildef} and Lemma \ref{nilinv}: the Chern forms of 
$\nabla ^{\#}$ vanish identically everywhere, because the curvature is everywhere strictly upper
triangular in some frame. Thus, the Cheeger-Simons class in differential characters,
lies in the subgroup $H^{2p-1}(X, \comx / \Z )$. This argument was mentioned in Corollary 2.4 of
Cheeger-Simons \cite{Ch-Si}. Independence of choices follows from 
Lemma \ref{nilinv}. 
\end{proof}

Using the extension of Deligne-Sullivan \cite{De-Su} given by Proposition \ref{trivext}, eventually after going to a finite
cover of $X$ ramified only over $D$, we can assume that the canonical extension $\ov{E}$ is trivial.
Thus, we can apply the variational way of understanding the Chern-Simons class of $\nabla ^{\#}$ in differential 
characters.

We will see in \S \ref{sec-rigidity} below (Corollary \ref{indepfilt}) 
that the class $\widehat{c}_p(\rho , W)$ is also independent of the choice of 
graded-extendable filtration $W$, so it can also be denoted $\widehat{c}_p(\rho /X)$.

In the more general normal-crossings case, one would like to apply the general considerations of the previous subsections
to obtain a construction. However, we found that it is not immediately obvious how to produce a covering and appropriate filtrations
such that the filtrations admit a common refinement on the intersections (Condition \ref{patchcomp}). The structure of the commuting nilpotent
logarithms of monodromy transformations is complicated. Some structure results are known, for example the monodromy
weight filtrations of $\sum a_i N_i$ are the same whenever $a_i>0$, a result which is now generalized from the case of variations of Hodge structure
to any harmonic bundle by Mochizuki \cite{Mochizuki}. However, this doesn't provide an immediate answer for patching the connection. 
This is one of the main reasons
why, in the present paper, we are treating the case of a smooth divisor only. 

See also Remark \ref{unclear-def} below for a somewhat different difficulty in the normal crossings case.

\section{Compatibility with the Deligne Chern class}
\label{delignecompatibility}


Suppose $X$ is a smooth complex projective variety. 
Consider the following situation: $E$ is a holomorphic vector bundle on $X$ with
holomorphic structure operator $\delbar$. Suppose $\nabla _1$ is a connection obtained by
the patching construction. We assume that in a standard neighborhood $V_x$ of any point 
$x$, the local filtrations $W^x$ are by
holomorphic subbundles of $(E,\delbar )$, and that the holomorphic structure on the graded pieces
$Gr^{W_x}_k(E)$ coincides with the $(0,1)$ part of the flat connections induced by $\nabla _1$
on these pieces. In this situation, we claim that the Chern-Simons class in $H^{2p-1}(X,\comx / \Z )$
defined by the patched connection $\nabla _1$, projects to the Deligne Chern class of
$(E,\delbar )$ in $H^{2p}_{\cD}(X, \Z (p))$. 

For this we use the formalism of $F^1$-connections introduced by Dupont, Hain and Zucker.
Their method fully works only when $X$ is compact. 
Recall that this is a variant of the differential character construction. Let $DHZ^{k,k+1}$ denote
the group of analogues of differential characters used by Dupont, Hain and Zucker. We have
an exact sequence, by quotienting the exact sequence in \eqref{diffeqn} by the Hodge piece
$F^p$;
$$
0\rightarrow H^{2p}_{\cD}(X, \Z (p)) \rightarrow DHZ^{2p - 1, 2p} \rightarrow 
\frac{A^{2p}_{\rm cl}(X, \Z )}{(A^{p,p} + \cdots + A^{2p, 0})\cap A^{2p}_{\rm cl}(X, \Z )} 
\rightarrow 0.
$$
Here $ DHZ^{2p - 1, 2p}:= \widehat{H^{2p}}(X,\comx/\Z)/F^p$.

Suppose we have a connection $\nabla _0$ compatible
with $\delbar$; this means that $\nabla _0 ^{0,1}=\delbar $. In
\cite{DHZ}, it is shown that the differential character defined by the connection $\nabla _0$ projects from 
$\what{H^{2p}}(X,\comx/\Z)$ to $DHZ^{2p - 1, 2p}$, to a class which goes to zero in the 
term ``closed forms modulo the Hodge filtration'' on the right, and which thus comes from a class in the 
Deligne cohomology on the left; and that this is the same as the Deligne class of $(E,\delbar )$. 

In our case, we construct $\nabla _0$ as follows: take 
$$
\nabla _0^{0,1}:=\delbar , \;\;\; \nabla _0 ^{1,0} := \nabla _1^{1,0}.
$$
This is by definition compatible with $\delbar$, so its class in $\what{H^{2p}}(X,\comx/\Z)$ projects to 
the Deligne Chern class by Dupont-Hain-Zucker \cite{DHZ}. 

On the other hand, notice that $\nabla _0$ defines a connection which preserves the filtration $W^x$
on the neighborhood of any $x\in X$, and which induces the original flat connection on the associated graded pieces.
Preserving the filtration is because $\delbar $ and $\nabla _1$ both preserve the filtration. On the graded pieces,
recall that $\nabla _1$ induces the flat connection, and also the flat connection has the same operator $\delbar$ as comes from 
$E$. In particular $\nabla ^{0,1}_0 = \nabla _1^{0,1}$ on the graded pieces, so $\nabla _0$ induces the same connection as $\nabla _1$ here.

 From this we get that $\nabla _0$ also has strictly upper triangular curvature form $\Omega _0$, so its class in 
$\what{H^{2p}}(X,\comx/\Z)$ projects to zero in $A^{2p}_{\rm cl}(X, \Z )$. Thus, $\nabla _0$ defines a class in 
$H^{2p-1}(X, \comx / \Z )$. This class projects to the Deligne Chern class, by the result of \cite{DHZ}.

To finish the proof of compatibility, we will show that $\nabla _0$ and $\nabla _1$ define the same class in
$H^{2p-1}(X, \comx / \Z )$. 

\begin{lemma}\label{lift}
The Chern-Simon classes $\what{c_p}(E,\nabla_0)$ and $\what{c_p}(E,\nabla_1)$ are equal.
\end{lemma}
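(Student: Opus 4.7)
The plan is to observe that $\nabla_0$ and $\nabla_1$ are both compatible with the same pre-patching collection on $X$, and then invoke Lemma \ref{nilinv}.

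More precisely, by the construction of $\nabla_1$ we have, on each standard neighborhood $V_x$, a filtration $W^x$ of $E|_{V_x}$ by holomorphic subbundles, together with a flat connection on each associated-graded piece ${\rm Gr}^{W^x}(E|_{V_x})$; the connection $\nabla_1$ preserves $W^x$ and induces these flat connections on the graded pieces. The remarks immediately preceding the lemma show that $\nabla_0 := \nabla_1^{1,0} + \delbar$ also preserves $W^x$, because $\delbar$ preserves it by holomorphy and $\nabla_1^{1,0}$ preserves it since $\nabla_1$ does and $\delbar$ does. On each graded piece, $\nabla_0$ has the same $(1,0)$-part as $\nabla_1$ by construction, and the same $(0,1)$-part as $\nabla_1$ because in both cases it coincides with the $\delbar$-operator of the quotient bundle. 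Hence $\nabla_0$ and $\nabla_1$ induce exactly the same flat connections on the graded pieces of $W^x$.

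Thus $\nabla_0$ and $\nabla_1$ are two connections compatible with the common pre-patching collection $(X, E, \{(V_x, W^x, \nabla_{x,{\rm Gr}})\})$. By Lemma \ref{nilinv}, their Chern-Simons classes in $H^{2p-1}(X, \comx/\Z)$ agree, i.e.
$$
\widehat{c_p}(E, \nabla_0) \;=\; \widehat{c_p}(E, \nabla_1).
$$

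The only step requiring any vigilance is checking that $\nabla_0$ genuinely belongs to the same pre-patching collection; once that is verified, Lemma \ref{nilinv} does all the work. If one preferred to avoid a direct appeal, the same conclusion would follow by running the variational argument of Lemma \ref{nilinv} on the affine path $\nabla_t = (1-t)\nabla_0 + t\nabla_1$: this path consists of connections preserving each $W^x$ and inducing the fixed flat connection on the associated-graded, so both $\nabla_t'$ and $\Omega_t$ are strictly upper triangular, every term in $p\,\cP(\nabla_t', \Omega_t, \ldots, \Omega_t)$ vanishes, and the derivative of the Chern-Simons class is zero.
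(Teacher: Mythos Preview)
Your proposal is correct and follows essentially the same approach as the paper. The paper's proof connects $\nabla_0$ and $\nabla_1$ by the affine path $\nabla_t = t\nabla_1 + (1-t)\nabla_0$ and then says ``the rest of the proof is the same as in Lemma~\ref{nilinv}''; you make the same observation---that both connections are compatible with the given pre-patching collection---and either invoke Lemma~\ref{nilinv} directly or rerun its variational argument, which amounts to the same thing.
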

\begin{proof}
For this, connect the connection $\nabla _1$ to $\nabla _0$ by an affine path of connections
$$
\nabla _t = t\nabla _1 + (1-t)\nabla _0.
$$
For $t=0,1$ this coincides with the previous ones. Let $\Omega _t$ denote the curvature form of $\nabla _t$ and
let $\nabla ' _t$ denote the derivative with respect to $t$. 
The rest of the proof is the same as in Lemma \ref{nilinv}.
\end{proof}

Denote this class by $\what{c_p}(E,\nabla)$, for $p\geq 1$.
We have thus shown, together with Lemma \ref{lift} and Corollary \ref{patchedclass}:

\begin{proposition}\label{gluedsecondary}
Suppose $X$ is a smooth complex projective variety, with 
a logarithmic connection $(E,\nabla)$ on $X$ with nilpotent 
residues along a smooth and irreducible divisor $D$. It restricts to a flat connection $(E_U,\nabla_U)$ on the complement 
$U:=X-D$. Let $B$ be a tubular neighbourhood 
of the divisor $D$ as obtained in Lemma \ref{tube} and $({E}_{B},\nabla_{B})$ be the restriction of $(E,\nabla)$ on $B$.
Then the secondary classes $\what{c_i}({E}_{B},\nabla_{B})$ and $\what{c_i}(E_U,\nabla_U)$ glue together to give a canonically determined class
$\what{c_i}(E,\nabla) \in H^{2i-1}(X,\comx/\Z)$, for $i\geq 1$. 
The classes
$\what{c_i}(E,\nabla)$ lift the Deligne Chern classes $c_i^\cD({E})$ under the projection
$$
H^{2i-1}(X,\comx/\Z) \lrar H^{2i}_\cD({X},\Z(i)).
$$ 
\end{proposition}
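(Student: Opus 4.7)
The plan is to assemble the proposition from the constructions already carried out in this section, checking that the patched-connection class does what the statement claims.

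First, I would invoke the patched connection from \S\ref{patchsmoothdiv}. On $U$ take the trivial filtration $W^0_0 = E$ with $\nabla_{0,{\rm Gr}} = \nabla_U$, and on $B$ take the kernel filtration of Lemma \ref{tube} (or any graded-extendable filtration $W$), with $\nabla_{1,{\rm Gr}}$ the connection on $\mathrm{Gr}^W(\ov{E}|_B)$ induced by $\nabla$ over $B^{\ast}$ and extended across $D$. Since one of the two filtrations is trivial, Condition \ref{patchcomp} is automatic and Theorem \ref{patchconstr} (or rather its explicit form in \S\ref{patchsmoothdiv}) yields a patched connection
$$
\nabla^{\#} = (1-\zeta)\nabla + \zeta \nabla_B
$$
on $\ov E$ over $X$. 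Corollary \ref{patchedclass} then defines
$$
\what{c_i}(E,\nabla) := \what{c_i}(\nabla^{\#}) \in H^{2i-1}(X,\comx/\Z),
$$
independent of the auxiliary choices. To see that this really glues the two local secondary classes, I would restrict: on $U' = X \setminus \mathrm{Supp}(\zeta) \subset U$ one has $\nabla^{\#} = \nabla_U$, so its Chern--Simons class restricts to $\what{c_i}(E_U,\nabla_U)$; on $B$ the connection $\nabla^{\#}$ is locally nil-flat with associated graded $\nabla_{1,\mathrm{Gr}}$, so by the invariance Lemma \ref{nilinv} it agrees with $\what{c_i}(\mathrm{Gr}(E_B)) = \what{c_i}(E_B,\nabla_B)$ as defined in Corollary \ref{sclasses}. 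This gives the canonicity claim.

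Next, for the Deligne Chern class compatibility, I would adapt the argument of \S\ref{delignecompatibility} verbatim. Using that $X$ is projective and that the kernel filtration is a filtration by holomorphic subbundles of $\ov E$ whose graded pieces are holomorphic (the flat connections on graded pieces have $(0,1)$-part equal to the induced $\delbar$-operator), form a second connection
$$
\nabla_0^{0,1} := \delbar, \qquad \nabla_0^{1,0} := (\nabla^{\#})^{1,0}.
$$
Since $\delbar$ and $\nabla^{\#}$ both preserve $W$ and induce the same operators on the graded pieces, so does $\nabla_0$; in particular its curvature is strictly upper triangular, so its Chern--Simons differential character lies in $H^{2p-1}(X,\comx/\Z)$. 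On the other hand, $\nabla_0$ is by construction compatible with the holomorphic structure of $\ov E$, so the Dupont--Hain--Zucker theorem shows that the image of its differential character in $DHZ^{2p-1,2p}$ is the Deligne Chern class $c_p^{\cD}(\ov E)$. Finally, the affine path $\nabla_t = t\nabla^{\#} + (1-t)\nabla_0$ is through connections that all preserve $W$ and induce the flat $\nabla_{\mathrm{Gr}}$ on graded pieces, so both $\nabla'_t$ and $\Omega_t$ are strictly upper triangular and the variational formula \eqref{varform} gives $\what{c_p}(E,\nabla_0) = \what{c_p}(E,\nabla^{\#})$ exactly as in Lemma \ref{lift}. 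Chasing through the exact sequence defining $DHZ^{2p-1,2p}$ then identifies $\what{c_p}(E,\nabla)$ as a lift of $c_p^{\cD}(\ov E)$.

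The main obstacle, conceptually, is ensuring that the patched class actually coincides with the gluing produced abstractly from the Mayer--Vietoris sequence of \S\ref{logarithmicconnections}: a priori this sequence leaves an indeterminacy by the image of $H^{2p-2}(B^{\ast},\comx/\Z)$. That is precisely what Corollary \ref{patchedclass} and the rigidity results of \S\ref{sec-rigidity} (via Corollary \ref{indepfilt}) resolve, and I would rely on them rather than attempting a direct Mayer--Vietoris argument. A secondary technical point, which is essentially cosmetic but must be stated carefully, is the holomorphicity of the kernel filtration of $\ov E$ along $D$; this follows from the residue description in Lemma \ref{tube} and is what allows $\nabla_0$ to be well defined and still locally nil-flat, bringing the Dupont--Hain--Zucker framework to bear.
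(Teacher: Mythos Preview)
Your proposal is correct and follows essentially the same route as the paper: you build the class via the patched connection $\nabla^{\#}$ from \S\ref{patchsmoothdiv} and Corollary \ref{patchedclass}, then compare it to the $\delbar$-compatible connection $\nabla_0$ with $\nabla_0^{1,0}=(\nabla^{\#})^{1,0}$, invoking Dupont--Hain--Zucker for the Deligne class and the affine-path variational argument (Lemma \ref{lift}) to identify $\what{c_p}(E,\nabla_0)=\what{c_p}(E,\nabla^{\#})$. Your additional remarks on the restriction to $U'$ and $B$ and on the Mayer--Vietoris indeterminacy are accurate elaborations of points the paper leaves implicit.
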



\section{Rigidity of the secondary classes}
\label{sec-rigidity}


In this section we would like to show that the secondary classes are invariant under deformations of the representation.
In the flat case this is a consequence of a well-known formula for the variation of the secondary class. 
In our case we need to be somewhat careful about the local filtrations. 

Before getting to the rigidity result we look at the construction from a somewhat more general point of view.
We are given an open covering of $X$ by neighborhoods $U$ and $B$. 
In our situation $B$ and even $B^{\ast}:= B\cap U$ are connected, and 
$\pi _1(B^{\ast})\rightarrow \pi _1(B)$ is surjective.

We have a representation $\rho$ of $\pi _1(U)$,
corresponding to a flat bundle $(E,\nabla )$ and to a local system $L= E^{\nabla}$ on $U$. Denote by 
$L_{B^{\ast}}$ the restriction
of $L$ to a local system on $B^{\ast}$. 
Suppose we are given a filtration
$W$ of $L_{B^{\ast}}$  such that the graded pieces $Gr^{W}_k$ extend to local systems over $B$.

The
patching construction with trivial filtration over $U$ gives
a patched connection and a secondary class which we denote here by
$$
\widehat{c}_p(E,\nabla , W ) \in H^{2p-1}(X, \comx / \Z )
$$
to emphasize dependence on the filtration.

Recall from Corollary \ref{commonrefinement}: if $\tilde{W}$ is a 
different filtration such that $W$ and $\tilde{W}$ admit a common refinement, then
$$
\widehat{c}_p(E,\nabla , W) = \widehat{c}_p(E,\nabla , \tilde{W} ).
$$

\begin{lemma}
\label{filtstring}
Suppose $W$ and $\tilde{W}$ are two filtrations of $L_{B^{\ast}}$ by
sub-local systems, such that the associated graded pieces extend as local systems on $B$. Then these are connected
by a string of  filtrations 
$$
W(0)=W, W(1), \ldots , W(a_1)= \tilde{W}
$$
such that $W(a)$ satisfies the same conditions for any $0\leq a \leq a_1$: it is a filtration of
$L_{B^{\ast}}$ by sub-local systems, 
such that the associated graded pieces extend as local systems on $B$. Furthermore, any adjacent ones
$W(a-1)$ and $W(a)$ admit a common refinement for $0< a \leq a_1$. 
\end{lemma}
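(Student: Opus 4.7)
The strategy is to recast graded-extendability in terms of the nilpotent monodromy and then to construct the chain by successive elementary modifications.

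First I would reformulate the condition. Since $\pi_1(B) = \pi_1(B^{\ast})/\langle\gamma\rangle$, a filtration $W_\bullet$ of $L_{B^{\ast}}$ by sub-local systems is graded-extendable if and only if $\rho(\gamma)$ acts trivially on each $\mathrm{gr}^W_k$, equivalently if the nilpotent endomorphism $N := \log\rho(\gamma)$ satisfies $NW_k \subseteq W_{k-1}$ for every $k$. In this formulation the lemma becomes a statement about chains of $N$-compatible, $\pi_1(D)$-invariant filtrations of a single finite-dimensional vector space.

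A useful observation is that the class of such filtrations is closed under intersection and sum: if $W$ and $\tilde W$ both satisfy the shift condition, then so do $(W\cap\tilde W)_k := W_k\cap\tilde W_k$ and $(W+\tilde W)_k := W_k+\tilde W_k$, and both remain filtrations by flat sub-local systems since intersection and sum of sub-local systems are again sub-local systems. This gives a supply of auxiliary $N$-compatible filtrations one can insert into the chain.

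I would then build the chain inductively by introducing into $W$ one term of $\tilde W$ at a time, using intersections $W_i \cap \tilde W_j$ or sums $W_i + (\tilde W_j \cap W_{i+1})$ as the new terms. This produces a sequence of intermediate $N$-compatible filtrations that bridge $W$ and $\tilde W$. Each elementary modification changes only one term of the current filtration, and one arranges for the new term to be comparable to all other terms of both the old and new filtration, so that by Lemma \ref{commonrefine} consecutive filtrations in the chain admit a common refinement.

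The main obstacle is the combinatorial bookkeeping. Coarsening an $N$-compatible filtration need not preserve the shift condition (removing a term can force $N$ to ``skip'' two levels), so one cannot simply route the chain through the trivial common coarsening $\{0, L_{B^\ast}\}$. The proof therefore requires a careful choice of the order of elementary modifications: one must simultaneously ensure that $N$-compatibility is preserved at every intermediate stage and that consecutive members of the chain have all their terms pairwise comparable. I expect this ordering to be the technical heart of the argument, but tractable because the bifiltration $\{W_i \cap \tilde W_j\}$, together with the sums $W_i + \tilde W_j$, provides enough inclusion-comparable building blocks to bridge $W$ and $\tilde W$ in finitely many steps.
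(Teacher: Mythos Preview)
Your reformulation of graded-extendability as the shift condition $NW_k\subseteq W_{k-1}$ is correct, and the observation that such filtrations are closed under termwise intersection and sum is also correct. But the proposal has a genuine gap at exactly the point you flag as ``the technical heart'': you never produce an algorithm for the elementary modifications, and the naive candidates fail. For example, the filtration $(W\cap\tilde W)_k:=W_k\cap\tilde W_k$ is indeed graded-extendable, but it need not admit a common refinement with $W$. Take $V=\comx^4$ with $N=0$ and trivial $\pi_1$-action, $W_i=\langle e_1,\ldots,e_i\rangle$, $\tilde W_i=\langle e_{5-i},\ldots,e_4\rangle$; then $W_3\cap\tilde W_3=\langle e_2,e_3\rangle$ is incomparable with $W_1=\langle e_1\rangle$, so by the criterion of Lemma~\ref{commonrefine} there is no common refinement of $W$ and $W\cap\tilde W$. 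Thus ``passing through $W\cap\tilde W$'' is not a single admissible step, and you give no recipe for the further intermediate filtrations that would be needed. The difficulty is not mere bookkeeping: at each elementary modification you must simultaneously (i) keep the new term comparable with \emph{all} old terms and (ii) preserve the shift condition on \emph{both} sides of the modified slot, and these two constraints pull in different directions.

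The paper bypasses this combinatorics entirely by routing the chain through a canonical anchor: the Jordan--H\"older (socle) filtration $W^{\rm JH}$ of $L_{B^\ast}$, whose associated graded is the semisimplification and hence automatically extends over $B$. The argument is then a clean induction on the rank: the socle of the bottom nontrivial piece of $W$ sits inside $W^{\rm JH}_0$, so both $W$ and $W^{\rm JH}$ induce graded-extendable filtrations on the quotient by this common subobject, the inductive hypothesis connects those, and one lifts the chain back and tacks $W$ and $W^{\rm JH}$ onto the ends. Doing the same for $\tilde W$ and concatenating gives the result. No bifiltration manipulations are needed.
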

\begin{proof}
The proof is by induction on the rank $r$ of $E$. It is easy for $r=1$, so we assume $r>1$ and that it is known for
representations of rank $r' <r$. 

Recall that we have the {\em canonical Jordan-H\"older filtration} $W^{\rm JH}$ of $L_i$. The first step
$W^{\rm JH}_0$ is {\em socle} or largest semisimple subobject of $L_{B^{\ast}}$, and
the remainder of the filtration is determined inductively by the condition that it should induce
the canonical Jordan-H\"older filtration of $L_{B^{\ast}}/W^{\rm JH}_0$. We note that this filtration 
has the property that
the associated graded pieces extend as local systems on $B$. Indeed, the associated-graded of 
$W(JH)$ is the semisimplification of $L_{B^{\ast}}$, but since $L_{B^{\ast}}$ has at least one filtration $W$ whose
graded pieces extend to $B$, it follows that the pieces of the semisimplification all extend to $B$. In this
argument we are using surjectivity of $\pi _1(B^{\ast})\rightarrow \pi _1(B)$ so that an extension to $B$
is unique if it exists.  

Denote by
$W_{b}$ the first nontrivial piece in the filtration $W$. Then the socle of $W_b$ is a 
nontrivial sub-local system $V\subset L_{B^{\ast}}$,
so by the universal property of the socle of $L_{B^{\ast}}$ 
this is contained in $W^{\rm JH}_0$. Now $V$ is a subsystem of the first elements
of both filtrations $W$ and $W^{\rm JH}$. Hence, $W$ and $W^{\rm JH}$ induce filtrations on 
$L_{B^{\ast}}/V$. By the inductive hypothesis,
these two filtrations are connected by a sequence as in the conclusion of the lemma. 
Lifting and including $V$ as the first element,
we obtain a sequence of filtrations connecting $W \cup \{ V \}$ to $W^{\rm JH}\cup \{ V\}$. We can then add on 
$W$ and $W^{\rm JH}$ to
the ends of this sequence, so we obtain a sequence connecting $W$ to $W^{\rm JH}$. Similarly there is a sequence connecting
$\tilde{W}$ to $W^{\rm JH}$. Putting them together we obtain a sequence 
connecting $W$ to $\tilde{W}$. This completes the proof.
\end{proof}

\begin{corollary}
\label{indepfilt}
Suppose $W$ and $\tilde{W}$ are two filtrations of $L_{B^{\ast}}$ by
sub-local systems, such that the associated graded pieces extend as local systems on $B$. Then 
$$
\widehat{c}_p(E,\nabla , W ) = \widehat{c}_p(E,\nabla , \tilde{W} ).
$$
\end{corollary}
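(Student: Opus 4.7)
The plan is to reduce Corollary~\ref{indepfilt} to the common-refinement invariance already established in Corollary~\ref{commonrefinement}, using the connecting chain of filtrations produced by Lemma~\ref{filtstring}. That lemma was designed precisely for this purpose: it allows one to pass between any two graded-extendable filtrations through a finite sequence of graded-extendable filtrations, each consecutive pair of which admits a common refinement. Corollary~\ref{commonrefinement} then says the secondary class is unchanged at each step. So the argument is essentially a formal chaining.

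More concretely, first I would apply Lemma~\ref{filtstring} to the pair $(W,\tilde{W})$ to obtain a chain
$$
W = W(0),\, W(1),\, \ldots ,\, W(a_1) = \tilde{W}
$$
of filtrations of $L_{B^{\ast}}$ by sub-local systems whose graded pieces extend to $B$, with each adjacent pair admitting a common refinement. Each $W(a)$ gives a valid pre-patching collection on $X$, with the trivial filtration on $U$ and $W(a)$ on $B$; these satisfy the patching compatibility condition~\ref{patchcomp} because the filtration on $U$ is trivial. Hence the classes $\widehat{c}_p(E,\nabla,W(a)) \in H^{2p-1}(X,\comx/\Z)$ are defined via Theorem~\ref{patchconstr} and Corollary~\ref{patchedclass}. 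For each $0 < a \leq a_1$, the common refinement of $W(a-1)$ and $W(a)$ on $B$, together with the (trivially refined) trivial filtration on $U$, produces a common refinement of the two corresponding patching collections. Corollary~\ref{commonrefinement} then gives $\widehat{c}_p(E,\nabla,W(a-1)) = \widehat{c}_p(E,\nabla,W(a))$, and chaining these equalities across $a = 1, \ldots, a_1$ yields the conclusion.

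The only substantive point to verify is that the common refinement of $W(a-1)$ and $W(a)$ on $B^{\ast}$ extends to a patching collection, i.e.\ that its associated graded pieces extend as local systems on $B$ so that the induced flat connection extends across $D$. This should follow from the structure of the chain built in Lemma~\ref{filtstring}, where each intermediate refinement is assembled out of subobjects (socles) and subquotients of filtrations already known to be graded-extendable, using surjectivity of $\pi_1(B^{\ast}) \rightarrow \pi_1(B)$ to guarantee uniqueness and hence compatibility of the extensions. I expect this bookkeeping to be the main obstacle; once granted, the corollary reduces to the one-line chaining argument above, and the invariant class may legitimately be denoted $\widehat{c}_p(\rho/X)$.
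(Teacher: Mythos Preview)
Your proposal is correct and follows essentially the same route as the paper: apply Lemma~\ref{filtstring} to obtain the chain $W(0),\ldots ,W(a_1)$, invoke the common-refinement invariance (Corollary~\ref{commonrefinement}) for each adjacent pair, and chain the equalities. The paper's proof is exactly this, in two sentences.

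Your one flagged worry---that the common refinement of $W(a-1)$ and $W(a)$ on $B^{\ast}$ is itself graded-extendable---is not an obstacle: each graded piece of the common refinement is a sub-local system of some $Gr^{W(a-1)}_k$ (or equally of some $Gr^{W(a)}_\ell$), and since $\gamma$ acts trivially on $Gr^{W(a-1)}_k$ by hypothesis, it acts trivially on every subquotient, so these pieces extend to $B$ as well. The paper does not spell this out, but it is the reason the recalled statement just before Lemma~\ref{filtstring} (``if $W$ and $\tilde W$ admit a common refinement, then $\widehat c_p(E,\nabla,W)=\widehat c_p(E,\nabla,\tilde W)$'') is valid in this setting.
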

\begin{proof}
Use the sequence of filtrations $W(a)$ constructed in the previous lemma. 
The secondary classes of adjacent elements are the same:
$$
\widehat{c}_p(E,\nabla , W(a-1) ) = \widehat{c}_p(E,\nabla ,W(a) ),
$$
because the adjacent elements admit a common refinement. Therefore 
$$
\widehat{c}_p(E,\nabla , W ) = \widehat{c}_p(E,\nabla , W(0) ) = \widehat{c}_p(E,\nabla , W(a_1) )
=\widehat{c}_p(E,\nabla , \tilde{W} ).
$$
\end{proof}

This corollary says that, while we used the monodromy weight filtrations as a canonical way of 
defining the secondary classes,
we could have used any filtrations compatible with the flat connection and having associated-graded 
which extend as flat bundles on
$B$. In view of this corollary, we now denote the secondary classes by
$$
\widehat{c}_p(\rho /X) \in H^{2p-1}(X, \comx / \Z ).
$$

\begin{corollary}
\label{functoriality}
These classes are additive in $\rho$  and contravariantly functorial in $(X,D)$.
\end{corollary}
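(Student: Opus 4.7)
Both claims follow by showing that the patched connection construction is naturally compatible with direct sums and with pullbacks, and then transferring the corresponding properties of Cheeger--Simons differential character classes. The independence of the filtration choice (Corollary \ref{indepfilt}) is crucial, as it permits us to pick convenient filtrations compatible with these operations without having to check an a priori compatibility.

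For additivity, given $\rho = \rho_1 \oplus \rho_2$ with both factors unipotent at infinity, I would choose graded-extendable filtrations $W_i$ of $E_i|_{B^{\ast}}$ for $i=1,2$ (say, the monodromy weight filtrations of each) and take the direct sum filtration $W := W_1 \oplus W_2$ on the sum. Using a single partition of unity $\{1-\zeta,\zeta\}$ together with a block-diagonal $\cC^{\infty}$-splitting of $W$ on $B$, the patched connection of the direct sum is literally the direct sum of patched connections, $\nabla^{\#}_{\rho_1\oplus\rho_2} = \nabla^{\#}_{\rho_1} \oplus \nabla^{\#}_{\rho_2}$. The additivity $\widehat{c}_p(\rho/X) = \widehat{c}_p(\rho_1/X) + \widehat{c}_p(\rho_2/X)$ then reduces to the additivity of the secondary Chern--Simons class over direct sums of connections with vanishing Chern forms, which is the content of the refined Beilinson--Esnault construction recalled in \eqref{universal}. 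Our patched class agrees with the Esnault class on $U = X - D$, and Corollary \ref{indepfilt} combined with the uniqueness built into the patching formalism extends the additivity from $U$ to $X$.

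For contravariant functoriality, suppose $f : (Y, D_Y) \to (X, D)$ is a morphism of pairs with $D_Y\subset Y$ a smooth irreducible divisor and $f(D_Y)\subset D$. The pullback $f^{\ast}\rho$ is unipotent at infinity, since a small loop around $D_Y$ maps under $f_{\ast}$ to a power of the generator $\gamma$ around $D$ and powers of unipotent matrices remain unipotent. The canonical extension is functorial, $\overline{f^{\ast} E} \simeq f^{\ast}\overline{E}$, and any graded-extendable filtration $W$ pulls back to one on the corresponding extension over $Y$. Choosing the tubular neighborhood $f^{-1}(B)$, the pulled-back partition of unity, and the pulled-back $\cC^{\infty}$-splitting, the connection $f^{\ast}(\nabla^{\#})$ is a patched connection for the pullback pre-patching collection (or becomes one after a refinement, which is harmless by Corollary \ref{commonrefinement}). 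The naturality of the Cheeger--Simons differential character classes then yields
\[
\widehat{c}_p(f^{\ast}\rho / Y) \;=\; f^{\ast}\widehat{c}_p(\rho / X) \;\in\; H^{2p-1}(Y, \comx/\Z).
\]

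The delicate point is additivity: for the ``bare'' Cheeger--Simons classes the Whitney sum formula is only multiplicative, producing cross terms $\widehat{c}_i(\rho_1)\cdot\widehat{c}_j(\rho_2)$ with $i,j\geq 1$, so strict additivity requires passing to the refined Beilinson--Esnault construction. The fact that our patched classes agree with the Esnault classes on $U$ (both coincide with the Cheeger--Simons class of a genuinely flat connection there), together with the uniqueness of the patched extension to $X$, transfers the additivity. Functoriality is comparatively straightforward, relying only on making the choices of tubular neighborhoods, splittings, and partitions of unity compatibly with $f$.
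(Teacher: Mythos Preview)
Your functoriality argument is essentially the paper's: choose a tubular neighborhood $B'$ for $D'$ mapping into $B$, pull back the filtration and the bump function, observe that $f^{\ast}\nabla^{\#}$ is a patched connection for $(f^{\ast}\rho, f^{\ast}W)$, and invoke naturality of differential characters. That part is fine.

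The additivity argument, however, has a genuine gap. You correctly reduce to showing that $\widehat{c}_p(\nabla_1^{\#}\oplus\nabla_2^{\#}) = \widehat{c}_p(\nabla_1^{\#}) + \widehat{c}_p(\nabla_2^{\#})$ for the patched connections on $X$, and you correctly note that the Whitney formula for Cheeger--Simons characters has cross terms. But your proposed fix does not work. The additivity of \eqref{universal} is a statement about \emph{flat} connections; the patched connections $\nabla_i^{\#}$ are not flat on $X$, only locally nil-flat. Your attempt to repair this by ``transferring additivity from $U$ to $X$'' via Corollary~\ref{indepfilt} and ``uniqueness of the patched extension'' is not valid: Corollary~\ref{indepfilt} gives independence of the filtration, not uniqueness of an extension from $U$, and the restriction map $H^{2p-1}(X,\comx/\Z)\to H^{2p-1}(U,\comx/\Z)$ is not injective in general (this is precisely the Mayer--Vietoris ambiguity noted after \eqref{mayervietoris}). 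Two classes on $X$ agreeing on $U$ need not agree on $X$.

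The paper resolves the cross-term issue directly on $X$ using the variational formula \eqref{varform}. The polynomial $\cP$ is a sum of the main term $Tr(X_1\cdots X_k)$ plus products of shorter traces $Tr(\cdots)Tr(\cdots)\cdots$. Along a block-diagonal path varying one summand, each such product has a factor $Tr(\Omega_1^{\#,b})$ or $Tr(\Omega_2^{\#,b})$ coming from the undeformed block; since the curvatures $\Omega_i^{\#}$ are strictly upper triangular (Proposition~\ref{nildef}), these traces vanish identically. Hence only the main trace term survives, and that term splits additively over the blocks. This kills the cross terms without ever leaving $X$ or invoking the Beilinson--Esnault construction.
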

\begin{proof}
Suppose $\rho _1$ and $\rho _2$ are representations on $U=X-D$ unipotent around $D$. Choose
graded-extendable filtrations $W^1$ for $\rho _1|_{B^{\ast}}$ and  $W^2$ for $\rho _2|_{B^{\ast}}$.
Then $(W^1\oplus W^2)_i:= W^1_i \oplus W^2_i$ is a graded-extendable filtration for $\rho _1\oplus \rho _2$.
>From our construction of patched connections $\nabla ^{\#}_1$ for $(\rho _1,W^1)$  and 
$\nabla ^{\#}_2$ for $(\rho _2,W^2)$, we get a patched connection $\nabla ^{\#}_1
\oplus \nabla ^{\#}_2$ for $(\rho _1\oplus \rho _2,W^1\oplus W^2)$. 
The associated differential character is the sum, because the differential characters are additive on
direct sums of connections with vanishing Chern forms---the terms of the form $Tr(\ldots )Tr(\ldots )\ldots $
don't contribute in the variational formula \eqref{varform}. 
Thus
$$
\widehat{c}_p((\rho _1\oplus \rho _2)/X) =  \widehat{c}_p(\rho _1/X) \widehat{c}_p(\rho _2/X) .
$$
Suppose $f:(X',D')\rightarrow (X,D)$ is a morphism of smooth quasiprojective varieties with smooth
divisors inducing a map $f:X'-D'\rightarrow X-D$, 
and suppose $\rho$ is a representation of $\pi _1(X-D)$ unipotent along $D$.
Then $f^{\ast}$ is unipotent along $D'$. We can choose a tubular neighborhood $B'$ of $D'$
mapping into the tubular neighborhood $B$ of $D$. If $W$ is a graded-compatible filtration for
$\rho$ over $B^{\ast}$ then $f^{\ast}$ is a graded-compatible filtration for $f^{\ast}(\rho )$,
and again by the formula for the patched connection $\nabla ^{\ast}$ for $(\rho , W)$,
we get that $f^{\ast}\nabla ^{\#}$ is a patched connection for 
$(f^{\ast}(\rho ),f^{\ast}W)$. Hence 
$$
\widehat{c}_p(f^{\ast}(\rho )/X') = f^{\ast}\widehat{c}_p(\rho /X) .
$$
\end{proof}

Turn now to the question of rigidity: if we deform the representation then the secondary classes stay the same. 

\begin{lemma}
\label{rigwithfilt}
Suppose we are given a $\cC^{\infty}$ family of representations $\rho (t)$ of $\pi _1(U)$, for $t\in [0,1]$. 
Suppose we are given a 
$\cC^{\infty}$ family of filtrations $W(t)$ by sub-local systems of
$L_{B^{\ast}}(t)$, having the property that the associated-graded
pieces extend across $B$. Then the secondary classes are constant:
$$
\widehat{c}_p(\rho (t)/X ) = \widehat{c}_p(\rho (t')/X), \;\;\; t,t'\in [0,1].
$$
\end{lemma}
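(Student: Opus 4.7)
The plan is to show that $\frac{d}{dt}\widehat{c}_p(\rho(t)/X)$ vanishes as a class in the tangent space to $\widehat{H^{2p}}(X,\comx/\Z)$ by plugging the smooth family of patched connections into the Cheeger--Simons variational formula and proving that the resulting integrand vanishes pointwise on $X$. I would first smoothly trivialize the underlying $\mathcal{C}^{\infty}$ vector bundle so that $\ov E$ becomes a single fixed bundle on $X$, with $\nabla(t)$ a smooth family of flat connections on $\ov E|_U$ and $W(t)$ a smooth family of subbundles of $\ov E|_{B^{\ast}}$ whose associated-graded pieces extend smoothly across $B$. Using a partition of unity $\{1-\zeta,\zeta\}$ subordinate to $\{U,B\}$ and a smoothly varying $\mathcal{C}^{\infty}$ splitting of the filtration on $B$, we build the smooth family
$$
\nabla^{\#}(t) \;=\; (1-\zeta)\nabla(t) + \zeta\nabla_{B}(t)
$$
of patched connections on $\ov E$. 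The variational formula \eqref{varform} then gives
$$
\frac{d}{dt}\,\widehat{c}_{p}\bigl(\ov E,\nabla^{\#}(t)\bigr) \;=\; p\,\cP\bigl(\nabla^{\#\prime}(t),\Omega^{\#}(t),\ldots,\Omega^{\#}(t)\bigr)
$$
as an element of $A^{2p-1}(X,\comx)/dA^{2p-2}(X,\comx)$; the task will be to show this integrand is identically zero on $X$ for $p\geq 2$, which is the range in which the lemma has content.

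Pointwise vanishing will be checked on the open cover $\{U',B\}$ with $U':=X\setminus\m{supp}(\zeta)$. On $U'$ we have $\nabla^{\#}(t)=\nabla(t)$, which is flat, so $\Omega^{\#}(t)=0$, and multilinearity of $\cP$ in its $p-1\geq 1$ curvature slots gives vanishing. On $B$ I would use contractibility of $[0,1]$ to produce, at least locally on $B$, a smooth family of gauge transformations of $\ov E|_B$ carrying a fixed filtration $W$ to $W(t)$; in this trivialization the conjugated $\nabla^{\#}(t)$ preserves $W$ and induces the flat connection $\nabla^{Gr}(t)$ on $\m{Gr}^{W}\ov E$. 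Then $\Omega^{\#}(t)$ is strictly upper triangular with respect to $W$, whereas $\nabla^{\#\prime}(t)$ is block upper triangular with block-diagonal part $(d/dt)\nabla^{Gr}(t)$. Expanding $\cP$ as a linear combination of monomials of products of traces and substituting, in each monomial every trace factor not containing $\nabla^{\#\prime}$ has the form $\m{Tr}\bigl((\Omega^{\#})^{k}\bigr)$ with $k\geq 1$ and vanishes because powers of strictly upper triangular matrices are strictly upper triangular and hence traceless, while the unique trace factor containing $\nabla^{\#\prime}$ is either of the form $\m{Tr}\bigl(\nabla^{\#\prime}(\Omega^{\#})^{m}\bigr)$ with $m\geq 1$ and vanishes by the same reason, or (when $m=0$) is multiplied in the same monomial by a vanishing factor of the first kind. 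By gauge invariance of the characteristic form, the conclusion passes to the original trivialization, so the integrand is identically zero on $X$.

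The main obstacle I expect is that the filtration $W(t)$ itself moves with $t$, so before any local trivialization neither ``preservation of $W$'' nor ``strictly upper triangular'' directly applies to $\nabla^{\#\prime}(t)$; the local gauge-straightening step on $B$ must be arranged with smooth dependence on $t$, which is where contractibility of $[0,1]$ enters, and the resulting pointwise calculation is then identical in structure to the trace vanishing used in the proof of Lemma \ref{nilinv}. Once the variational integrand is shown to vanish, integrating in $t$ yields $\widehat{c}_{p}(\rho(t)/X)=\widehat{c}_{p}(\rho(0)/X)$ for every $t\in[0,1]$, as desired.
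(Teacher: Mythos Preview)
Your strategy matches the paper's: build a smooth family of patched connections, apply the Cheeger--Simons variational formula, and kill the integrand via flatness on $U'$ and triangularity on $B$. The slip is in the step ``by gauge invariance of the characteristic form, the conclusion passes to the original trivialization.'' The transgression form $\cP(\nabla',\Omega,\ldots,\Omega)$ is invariant under conjugation of its arguments by a \emph{fixed} automorphism $g$, but not under a $t$-dependent family $g(t)$: writing $\tilde\nabla(t)=g(t)^{-1}\nabla^{\#}(t)g(t)$ one finds $\tilde\nabla'=g^{-1}\nabla^{\#\prime}g+\tilde\nabla(g^{-1}g')$, so the two transgression forms differ by the exact term $d\,\cP(g^{-1}g',\tilde\Omega,\ldots,\tilde\Omega)$. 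Equivalently, $\nabla^{\#\prime}(t_0)$ need not preserve $W(t_0)$, since it is a limit of differences of connections preserving \emph{different} filtrations $W(t)$. Thus your integrand is in general not identically zero on $B$, only locally exact there, and local exactness on $B$ together with vanishing on $U'$ does not by itself force global exactness on $X$.

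The paper avoids this by straightening the filtration \emph{globally} at the outset. Since the ranks of the $W_k(t)$ are constant in $t$, a Gramm--Schmidt process (after possibly subdividing $[0,1]$ into finitely many pieces) yields a single identification of all the underlying $\cC^{\infty}$ bundles with a fixed $E$ under which every $W(t)$ becomes the \emph{same} filtration $W$ of $E|_B$. In this trivialization the family $\nabla^{\#}(t)$ itself preserves $W$ for all $t$, so $\nabla^{\#\prime}(t)$ is genuinely block upper triangular and $\Omega^{\#}(t)$ strictly upper triangular on $B$; the variational form then vanishes pointwise on all of $X$ and no gauge-invariance appeal is needed. Your proof becomes correct once you make this global identification first; alternatively one may note that the exact correction above is supported in $\m{supp}(\zeta)\subset B$ and hence extends by zero to $X$, but the global straightening is cleaner.
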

\begin{proof}
We may localize in $t$ to smaller intervals if necessary. 
Let $E$ be the $\cC^{\infty}$-bundle underlying the canonical extension of $\rho (0)$.
Then we may identify the bundles corresponding to $\rho (t)$ with $E$, in such a way 
that the filtrations all correspond to the same
filtration by strict subbundles. This is because the elements of the filtrations have 
the same ranks for all $t$, and as $t$ varies
we can redress the subbundles back onto the same original one by a Gramm-Schmidt 
process which is locally unique. 
Here we might cut the interval up into smaller pieces, but still
a finite number by a compactness argument.

Now, $\rho (t)$ corresponds to a connection $\nabla (t)$ on $E|_{U}$, and
$E|_{B}$ has a filtration by strict subbundles $W$ which corresponds to the filtration of
local systems $W(t)$ for each $t$. We obtain a $\cC^{\infty}$ family of patched connections
$\nabla ^{\#}(t)$. 
These are all locally nil-flat, with respect to a constant filtration on each open set. 

Now we apply the usual proof of rigidity of Chern-Simons classes, see
Cheeger-Simons \cite[Proposition 2.9]{Ch-Si}.
They show that the difference between the Chern-Simons classes is given by 
$$
\widehat{c_p}(\nabla ^{\#} (1)) - \widehat{c_p}(\nabla ^{\#} (0)) =
i.\int _0^1 P(\frac{d}{dt}\nabla^{\#}(t)\wedge \Omega^{p-1}_t)dt |_{Z_{2i-1}}
$$
Here $P$ is the trace form defining the $i$-th Chern form and the integral is taken with endpoints $0$ and $1$.
In our case, 
$\frac{d}{dt}\nabla^{\#}(t)$ are upper-triangular and $\Omega ^{p-1}_t$ are strictly upper-triangular. 
Hence, as long as $p-1 > 0$ the trace vanishes.
\end{proof}

Finally, we obtain the rigidity in general.

\begin{proposition}
\label{rigidity}
Suppose $X$ is covered by a Zariski open dense subset $U$ and a tubular neighborhood
$B$ of the irreducible smooth  divisor $D$, 
with $B^{\ast}:= B\cap U$.
Suppose we are given a  continuous family of representations $\rho (t)$ of $\pi _1(U)$ for $t\in [0,1]$,
whose monodromy transformations around the divisor  $\rho (t)(\gamma )$ are unipotent. 
Then the secondary classes are constant:
$$
\widehat{c}_p(\rho (0) /X) = \widehat{c}_p(\rho (1)/X).
$$
\end{proposition}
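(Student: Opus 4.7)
We reduce Proposition~\ref{rigidity} to Lemma~\ref{rigwithfilt} via three ingredients: continuity of the map $\rho \mapsto \widehat{c}_p(\rho/X)$, construction of a smooth family of graded-extendable filtrations along any smooth family of representations (at least away from special values of $t$), and approximation of continuous paths by smooth ones.

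\emph{Step 1: Continuity of the class.} The patched connection $\nabla^{\#}$ of Section~\ref{patchsmoothdiv} is built from $(\rho,W)$ via a smooth partition of unity and a smooth trivialization of the filtration, so it depends continuously in the $\cC^{\infty}$-topology on continuously-varying $\rho$ when $W$ is held fixed. The Cheeger--Simons class in $\widehat{H^{2p}}(X,\comx/\Z)$ is a continuous function of the connection, and by Corollary~\ref{indepfilt} the resulting secondary class in $H^{2p-1}(X,\comx/\Z)$ is independent of $W$; hence $\rho \mapsto \widehat{c}_p(\rho/X)$ is continuous on the variety of representations with $\rho(\gamma)$ unipotent.

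\emph{Step 2: Smooth graded-extendable filtrations.} Given a smooth family $\rho(t)$ with unipotent $\rho(t)(\gamma)$, set $N(t) := \log \rho(t)(\gamma)$. Because $B^{\ast}$ is a punctured disc bundle over $D$, the loop $\gamma$ is central in $\pi_1(B^{\ast})$ (for any principal $S^1$-bundle, $\pi_1$ of the fibre maps centrally into $\pi_1$ of the total space), so $N(t)$ commutes with $\rho(t)(\pi_1(B^{\ast}))$. Each subspace $\ker N(t)^k$ is therefore $\pi_1(B^{\ast})$-invariant, and the kernel filtration is graded-extendable. On the dense open subset $V \subset [0,1]$ where the Jordan type of $N(t)$ is constant, the filtration $W(t):=\ker N(t)^{\bullet}$ varies smoothly in the relevant Grassmannians. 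Lemma~\ref{rigwithfilt} applied on each connected component of $V$ gives constancy of $\widehat{c}_p(\rho(t)/X)$ there, and Step~1 extends this across the finitely many special points to yield constancy on all of $[0,1]$ for any smooth family.

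\emph{Step 3: From smooth to continuous, and main obstacle.} For the given continuous family $\rho(t)$, approximate it uniformly by smooth families $\tilde\rho_n$ in the (Zariski-closed) subvariety of representations with unipotent $\rho(\gamma)$, arranging $\tilde\rho_n(0)=\rho(0)$ and $\tilde\rho_n(t_0)=\rho(t_0)$ for a fixed $t_0 \in [0,1]$. By Step~2 each $\tilde\rho_n$ has constant class, so $\widehat{c}_p(\rho(0)/X)=\widehat{c}_p(\rho(t_0)/X)$ upon taking $n\to\infty$ and using the continuity from Step~1; letting $t_0$ vary over $[0,1]$ completes the proof. The principal obstacle is the potential jumping of the Jordan type of $N(t)$ along a smooth family, which prevents the kernel filtration from being smooth over the whole interval. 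We bypass this by combining Lemma~\ref{rigwithfilt} on the smooth stratum $V$ with the continuity of the class (Step~1) to glue across the special points, exploiting the flexibility of Corollary~\ref{indepfilt} in the choice of filtration.
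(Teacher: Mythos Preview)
Your strategy is close to the paper's, but two steps contain genuine gaps. In Step~1, the ``hence $\rho \mapsto \widehat{c}_p(\rho/X)$ is continuous'' does not follow. You have argued that for a \emph{fixed} admissible $W$ the class depends continuously on $\rho$, and that the value is independent of $W$; but no single $W$ is admissible on a full neighbourhood of a given $\rho_0$ (if $N_0=0$ and nearby $N\neq 0$, the trivial filtration works for $\rho_0$ but for none of its neighbours). So continuity across a Jordan-type jump is precisely the unresolved point, and Corollary~\ref{indepfilt} alone does not supply it. In Step~2, for a general $\cC^{\infty}$ family the special set where the Jordan type of $N(t)$ jumps need not be finite---take $N(t)=f(t)M$ with $M$ nilpotent and $f$ smooth vanishing on a Cantor set. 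Then ``locally constant on a dense open $V$ plus continuous on $[0,1]$'' does \emph{not} imply constant (the Cantor staircase is a counterexample), so even granting Step~1 your gluing would fail.

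The paper avoids both problems at once. It first replaces the continuous path by a \emph{piecewise algebraic} one in the affine moduli variety of representations with unipotent $\rho(\gamma)$; along an algebraic arc the Jordan-type stratification of $[0,1]$ is automatically finite. Then, on each closed subinterval $[a_i,a_{i+1}]$, instead of invoking continuity it takes the \emph{limit} of the kernel filtrations as $t\to a_i^{+}$ and $t\to a_{i+1}^{-}$. These limits are still filtrations by sub-local-systems with extendable graded pieces (though typically not the kernel filtration at the endpoint), so Lemma~\ref{rigwithfilt} applies on the \emph{closed} interval with this continuously extended family of filtrations; Corollary~\ref{indepfilt} then reconciles the two filtrations meeting at each interior node $a_i$. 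Your proof can be repaired the same way: in Step~3 approximate by piecewise algebraic (or real-analytic) paths rather than merely smooth ones, and in Step~2 extend the filtrations to the endpoints by taking limits rather than appealing to an unproven global continuity.
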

\begin{proof}
There is an affine modular variety for representations $\rho$ of $\pi _1(U)$ such that 
$\rho (\gamma )$ has trivial characteristic polynomial. In view of this, we
may replace our continuous family of representations by a piecewise algebraic family. 
Then, the interval of definition can be divided up into sub-intervals of the form $[a_i,a_{i+1}]$
such that for $t\in (a_i,a_{i+1})$ the monodromy weight filtrations (or more easily, the kernel filtrations)
of the $N(t):=\log \rho (t)(\gamma )$ vary in an algebraic manner with the same ranks. 
Then at the endpoints of these intervals, the limits of these filtrations are again filtrations
by sub-local systems, such that the associated graded pieces extend across $B$. Note however that
the limiting filtrations will not in general be the monodromy weight filtrations or kernel filtrations
of the $N(t)$. Lemma \ref{indepfilt} shows that the secondary classes defined with these limiting
filtrations, are the same as those defined by the monodromy weight filtrations. 
Then Lemma \ref{rigwithfilt} applies to give
$$
\widehat{c}_p(\rho (a_i) /X) = \widehat{c}_p(\rho (a_{i+1})/X).
$$
Putting these all together, from the first and last intervals we get the statement of the proposition.
\end{proof}

\section{A deformational variant of the patching construction in $K$-theory}
\label{kclassdef-sec}

Reznikov's proof involved an aspect of arithmetical $K$-theory. Starting with a representation $\rho$ defined over a field $F$,
he considered all embeddings $F\hookrightarrow \comx$; for each embedding the volume piece of the regulator vanished, by 
differential geometric arguments. Using Borel's calculation of the rational $K$-theory of $F$,
this then implied that the classifying map $X\rightarrow BSL(F)$ was trivial on the rational homology.
In order to replicate this part of the proof here, we need a version of the regulator construction which is
related to $K$-theory. 

Recall that, with our usual notations, the diagram
$$
\begin{array}{ccc} 
B^{\ast}  & \rightarrow & B \\ 
\downarrow & & \downarrow \\ 
U & \rightarrow & X
\end{array} 
$$
is a homotopy pushout. Indeed, if we were to replace $U$ by the complement $U_0$ of the interior of $B$,
consider $B_0$ a smaller closed tubular neighborhood,  and let $B^{\ast} _0$ be the closure of 
$X-U_0 - B_0$ then $B^{\ast} _0$ is a cylinder on $S$ which is the boundary of $U_0$ or $B_0$
(they can be identified by an isomorphism). Then $X$ is exactly the standard cylindrical construction
of the homotopy pushout for the maps $S\rightarrow B_0$ and $S\rightarrow U_0$, and the diagram
$$
\begin{array}{ccc} 
B^{\ast}_0 = S\times [0,1]  & \rightarrow & B_0 \\ 
\downarrow & & \downarrow \\ 
U_0 & \rightarrow & X
\end{array} 
$$
is homotopy equivalent to the previous one. 

Suppose we have a representation $\rho$ defined over a field $F$, that is
$\rho : \pi _1(U)\rightarrow GL_r(F)$. Because of the homotopy pushout square, the 
problem of constructing a map $X\rightarrow BGL(F)^+$  reduced to giving maps on $B$ and $U$,
plus a comparison over $B^{\ast}$. The representation $\rho$ gives a map $U\rightarrow BGL(F)$.
With the assumption that $\rho$ is unipotent at infinity, choice of a compatible filtration $W$
for $\rho |_{B^{\ast}}:= \rho |_{\pi _1(B^{\ast})}$ induces a representation which extends over $B$ to
$$
Gr^W(\rho |_{B^{\ast}}) : \pi _1(B)\rightarrow GL_r(F)
$$
again giving a map $B\rightarrow BGL(F)$. 

Deformation from a representation to its associated-graded, is a polynomial deformation. 
Thus we can get a map $B^{\ast}\rightarrow BGL(F[t])$ linking the maps on $U$ and $B$.
The deformation theorem in $K$-theory allows us to interpret this as a glueing datum
giving rise to a map $X\rightarrow BGL(F)^+$. 

\subsection{The deformation theorem}

Consider the following situation. Let $F$ be a field. We get two morphisms $e_0, e_1 : F[t]\rightarrow F$  
consisting of evaluation of polynomials at $0$ and $1$ respectively. Inclusion of constants is $c:F\rightarrow F[t]$, 
whose composition with $e_i$ is the identity.  
The {\em deformation theorem} in $K$-theory (Quillen \cite{Quillen}, see \cite{Srinivas} or \cite{Rosenberg}) 
says that all of these maps induce homotopy equivalences 
of $K$-theory spaces  
$$ 
BGL(F) ^+ \rightarrow BGL(F[t]) ^+ \stackrel{e_0 \; {\rm or} \; e_1}{\longrightarrow} BGL(F) ^+ . 
$$

Define a space $BGL(F) ^+ _{\rm def}$ to be the homotopy pushout in the diagram  
$$ 
\begin{array}{ccc} 
\rightarrow BGL(F[t]) ^+ & \rightarrow & BGL(F) ^+ \\ 
\downarrow & & \downarrow \\ 
BGL(F) ^+ & \rightarrow & BGL(F) ^+_{\rm def}  
\end{array} . 
$$ 
Explicitly, $BGL(F) ^+_{\rm def}$ is obtained by glueing the cylinder 
$BGL(F[t]) ^+ \times [0,1]$ to two copies of $BGL(F) ^+$ along the evaluation maps 
$$
e_0 : BGL(F[t]) ^+ \times \{ 0 \} \rightarrow BGL(F) ^+
$$
and 
$$
e_1 : BGL(F[t]) ^+ \times \{ 1 \} \rightarrow BGL(F) ^+ .
$$
We can express $BGL(F) ^+_{\rm def}$ as a union of two open sets: the first is the glueing of 
$$
BGL(F[t]) ^+ \times [0,1) \;\;\mbox{to a copy of}\;\; BGL(F) ^+\;\;\mbox{by}\;\; e_0, 
$$
the second is the
glueing of 
$$
BGL(F[t]) ^+ \times (0,1) \;\;\mbox{to the other copy of}\;\; BGL(F) ^+\;\;\mbox{by}\;\; e_1. 
$$
And their
intersection is $BGL(F[t]) ^+ \times (0,1)$. Each of the open sets retracts to $BGL(F)^+$. 
The deformation theorem implies that  the top and left maps in the above square are homotopy equivalences.
The Van Kampen theorem plus Mayer-Vietoris implies that the maps $BGL(F) ^+\rightarrow BGL(F)_{\rm def}^+$
are homotopy equivalences.

\begin{corollary} 
\label{def-isom} 
There is an isomorphism on cohomology with any coefficients  
$$ 
H^{\ast} (BGL(F) ^+_{\rm def} , k) \cong H^{\ast} (BGL(F) ^+ , k) = H^{\ast} (BGL(F) , k) . 
$$ 
\end{corollary}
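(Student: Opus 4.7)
The plan is to observe that this corollary is essentially a cohomological shadow of the homotopy equivalence $BGL(F)^+ \to BGL(F)_{\rm def}^+$ asserted in the paragraph preceding the statement. Since homotopy equivalences induce isomorphisms on cohomology with any coefficient group, one immediate route is simply to quote that homotopy equivalence. Nonetheless, to make the proof self-contained, I would give a direct Mayer--Vietoris argument, which is the same argument used to prove the homotopy equivalence but reads off the cohomology conclusion more quickly.

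Concretely, I would apply Mayer--Vietoris to the open cover $BGL(F)^+_{\rm def} = V_0 \cup V_1$ described in the text, where $V_0$ is the gluing of $BGL(F[t])^+ \times [0,1)$ to one copy of $BGL(F)^+$ via $e_0$, and $V_1$ is the analogous construction using $e_1$. Each $V_i$ deformation retracts onto its copy of $BGL(F)^+$ by collapsing the $[0,1)$ factor, and $V_0 \cap V_1 = BGL(F[t])^+ \times (0,1)$ deformation retracts onto $BGL(F[t])^+$. Under these retractions, the inclusion $\iota_i : V_0 \cap V_1 \hookrightarrow V_i$ corresponds to the evaluation map $e_i : BGL(F[t])^+ \to BGL(F)^+$. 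By Quillen's deformation theorem (cited in the excerpt), each $e_i$ is a homotopy equivalence, so $\iota_i^{*} : H^{n}(V_i,k) \to H^{n}(V_0 \cap V_1, k)$ is an isomorphism for every $n$ and every coefficient group $k$.

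Feeding this into the Mayer--Vietoris long exact sequence
$$
\cdots \to H^{n-1}(V_0\cap V_1,k) \to H^{n}(BGL(F)^{+}_{\rm def},k) \to H^{n}(V_0,k) \oplus H^{n}(V_1,k) \xrightarrow{\iota_0^{*}-\iota_1^{*}} H^{n}(V_0\cap V_1,k) \to \cdots,
$$
the difference map $\iota_0^{*}-\iota_1^{*}$ is surjective (since each $\iota_i^{*}$ is already an isomorphism), so all the connecting homomorphisms vanish and the sequence breaks into short exact sequences
$$
0 \to H^{n}(BGL(F)^{+}_{\rm def},k) \to H^{n}(V_0,k) \oplus H^{n}(V_1,k) \xrightarrow{\iota_0^{*}-\iota_1^{*}} H^{n}(V_0\cap V_1,k) \to 0.
$$
The kernel of the difference map is the diagonal copy of $H^{n}(BGL(F)^{+},k)$, and either projection identifies $H^{n}(BGL(F)^{+}_{\rm def},k)$ with $H^{n}(BGL(F)^{+},k)$; this isomorphism is induced by either of the two canonical inclusions $BGL(F)^{+} \hookrightarrow BGL(F)^{+}_{\rm def}$.

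There is no serious obstacle: the entire content of the corollary is packaged into Quillen's deformation theorem, and Mayer--Vietoris converts that theorem mechanically into the desired cohomological statement. The only point demanding a moment's care is the identification of the Mayer--Vietoris restriction maps $\iota_i^{*}$ with $e_i^{*}$ (as opposed to the inclusion of constants $c^{*}$); this is clear from the explicit gluing prescription and is precisely why the deformation theorem's homotopy-equivalence assertion for \emph{both} evaluations is used.
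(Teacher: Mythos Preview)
Your proof is correct and follows essentially the same approach as the paper: the paper's argument is contained in the paragraph immediately preceding the corollary, where Van Kampen plus Mayer--Vietoris (applied to the same open cover $V_0 \cup V_1$ you use) shows that $BGL(F)^+ \to BGL(F)^+_{\rm def}$ is a homotopy equivalence, from which the cohomological statement follows immediately. You have simply made the Mayer--Vietoris computation explicit at the cohomological level rather than passing through the homotopy equivalence first; the only minor imprecision is calling the kernel of $\iota_0^* - \iota_1^*$ the ``diagonal'' (this requires $e_0^* = e_1^*$, which does hold since both are homotopy inverses to $c$, but in any case either projection gives the desired isomorphism).
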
 

\subsection{Deformation patching}
 
Now we say that a {\em deformation patching datum} for our diagram of pointed connected spaces  
$$ 
(U,p)\stackrel{a}{\leftarrow} (B^{\ast}, p) \stackrel{b}{\rightarrow} (B, p) 
$$ 
is a triple of representations  
$$ 
\eta _U : \pi _1(U, p) \rightarrow GL(F), 
$$ 
$$ 
\eta _B : \pi _1(B, p) \rightarrow GL(F), 
$$ 
and 
$$ 
\eta _{B^{\ast}} : \pi _1(B^{\ast},p) \rightarrow GL(F[t]) 
$$ 
such that  
$$ 
e_1\circ \eta _{B^{\ast}}  = a^{\ast}(\eta _U) \;\;\mbox{and} \;\;  
e_0\circ \eta _{B^{\ast}}  = b^{\ast}(\eta _B) . 
$$ 
In more geometric terms, we require representations on $U$ and $B$, plus a deformation between their 
restrictions to $B^{\ast}$.  Typically, the representations will go into a finite-dimensional subgroup
of the form $GL_r(F)$ (resp. $GL_r(F[t])$). 
 
Suppose $X$ is the homotopy pushout of $U\leftarrow B^{\ast} \rightarrow B$.  
We can assume for example that, as in the geometric situation, this diagram is homotopic
to $U_0 \leftarrow S \leftarrow B_0$ and $B^{\ast}$ is a cylinder $S\times (0,1)$,
furthermore $U = U_0\cup B^{\ast}$ and $B = B_0 \cup B^{\ast}$ retract to $U_0$ and $B_0$ respectively. 
 
Given a deformation patching datum $(\eta _U, \eta _B, \eta _{B^{\ast}})$, 
the representations give maps 
$$
U,B \rightarrow BGL(F)\;\; \mbox{and}\;\; B^{\ast} \rightarrow BGL(F[t]).
$$  
By functoriality of homotopy pushout, this gives a homotopy class of maps 
$$ 
X \rightarrow BGL(F) ^+_{\rm def}, 
$$ 
in particular using Corollary \ref{def-isom} we get a map 
$$ 
H^{\ast}(BGL(F), k) \rightarrow H^{\ast}(X,k). 
$$ 
 
If $F\subset \comx$ then we can apply this to the universal regulator class in $H^{2p-1}(BGL(F), \comx / \Z )$ 
to get a {\em deformation regulator class} denoted 
$$
\widehat{c}_p^{\rm def}(\eta _U, \eta _B, \eta _{B^{\ast}})\in H^{2p-1}(X, \comx / \Z ).
$$ 
Its imaginary part will be called the {\em deformation volume regulator} denoted 
$$
Vol_{2p-1}^{\rm def}(\eta _U, \eta _B, \eta _{B^{\ast}})\in H^{2p-1}(X, \R ).
$$ 

\subsection{The deformation associated to a filtration}\label{deformation-filtration}

We now point out that we get a deformation patching in our standard canonical-extension
situation. 
Assume here that the map $\pi _1(B^{\ast})\rightarrow \pi _1(B)$ is surjective,
as is the case if $B$ is the tubular neighborhood of a divisor $D$ and $B^{\ast}=B-D$. 

Say that a {\em filtration patching datum} consists of a representation 
$$
\rho : \pi _1(U)\rightarrow GL(V)
$$
for a finite dimensional vector space $V$, plus a filtration $W$ of $V$ such that
$W$ is invariant under the action of $\pi _1(B^{\ast})$, and the induced action
of $\pi _1(B^{\ast})$ on $Gr^W(V)$ factors through a representation
$$
Gr^W(\rho /B): \pi _1(B)\rightarrow GL(Gr^W(V)).
$$
Note that this factorization is unique because of the assumption that 
$\pi _1(B^{\ast})\rightarrow \pi _1(B)$ is surjective.  In the divisor situation,
such a filtration will exist if and only if $\rho (\gamma )$ is unipotent. 

Given a filtration patching datum $(\rho , W)$  
we can define a deformation patching datum as follows. Choose a splitting for the 
filtration $V= \bigoplus _iV_i$ which yields $V\cong Gr^W(V)$, 
and furthermore choose a compatible basis for $V$ which gives 
$$
GL(V)\cong GL_r(F)\hookrightarrow GL(F). 
$$
Composing with $\rho$ gives $\eta _U: \pi _1(U)\rightarrow GL(F)$. 
On the other hand, composing with the representation $Gr^W(\rho /B)$ gives 
$$
\eta _B : \pi _1(B)\stackrel{Gr^W(\rho /B)}{\lrar} GL(Gr^W(V))\cong GL(V)\cong GL_r(F) \hookrightarrow GL(F). 
$$
For $\eta _{B^{\ast}}$, notice that the matrices preserving $W$ are 
block upper triangular. Then define a deformation between $\rho |_{\pi _1(B^{\ast})}$ and 
$Gr^W(\rho /B) |_{\pi _1(B^{\ast})}$ as follows (this is the same deformation as was refered to 
in the proof of Lemma \ref{model} communicated by Deligne \cite{De3}). Using the decomposition of $V$ we get  
$$ 
End(V)\cong \bigoplus _{i,j} Hom (V_i, V_j). 
$$ 
Let $End_W(V)$ be the subspace of endomorphisms preserving $W$, so  
$$ 
End_W(V)\cong \bigoplus _{i\geq j} Hom (V_i, V_j). 
$$ 
Define a map $\psi _t : End _W(V)\rightarrow End_W(V)$ by multiplying by $t^{j-i}$ on the piece $Hom (V_i,V_j)$. 
At $t=1$ this is the identity and at $t=0$ this is the projection to the block diagonal pieces. 
Note that $\psi _t (MM') = \psi _t(M)\psi _t(M')$. Thus if $p:\Gamma \rightarrow End_W(V)$ is a group representation 
(whose image lies in the subset of invertible matrices) then the function $\psi _t\circ p$ is again  
a group representation. This gives a deformation $\Gamma \rightarrow GL_r(F[t])$ whose value at $0$ is 
the original $p$ and whose value at $0$ is the associated-graded of $p$. Apply this to  
the restriction $\rho  |_{\pi _1(B^{\ast})}$ with $\Gamma = \pi _1(B^{\ast})$. 
This gives a representation  
$$ 
\eta ' : \pi _1(B^{\ast})\rightarrow GL_r(F[t]) 
$$ 
such that $e_1\eta '$ is the representation $\rho |_{\pi _1(B^{\ast})}$, and whose value 
$e_0\eta '$ is the associated-graded, which is equal to $Gr^W(\rho /B) |_{\pi _1(B^{\ast})}$.
Letting $\eta _{B^{\ast}}$ be the composition of $\eta $ with the inclusion $GL_r(F)\hookrightarrow GL(F)$ we 
have completed our deformation patching datum $(\eta _U, \eta _B, \eta _{B^{\ast}})$ 
associated to the filtration patching datum $(\rho , W)$. 
 
\begin{corollary} 
\label{filtrationclasses} 
Suppose we have a filtration patching datum $(\rho , W)$.  
Then again supposing $X$ is the homotopy  pushout of  
$U\leftarrow B^{\ast}\rightarrow B$ we get a map $X\rightarrow BGL(F)^+$ and classes  
in $H^{\ast}(X,k)$ for any class in $H^{\ast}(BGL(F),k)$. In particular for $\sigma : F\rightarrow \comx$  
we get regulator  
classes denoted $\widehat{c}^{\rm def}_p(\rho , W)\in H^{2p-1}(X,\comx / \Z )$.  
\end{corollary}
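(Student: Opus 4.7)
The plan is to combine the two constructions given immediately before the statement: the explicit production of a deformation patching datum $(\eta_U, \eta_B, \eta_{B^{\ast}})$ from a filtration patching datum $(\rho,W)$, and the general mechanism for extracting a map $X \to BGL(F)^+_{\mathrm{def}}$ from a deformation patching datum when $X$ is the homotopy pushout of $U \leftarrow B^{\ast} \to B$.

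First I would verify that the triple $(\eta_U,\eta_B,\eta_{B^{\ast}})$ really is a deformation patching datum. The key point is that the map $\psi_t$ on $End_W(V)$, defined by multiplication by $t^{j-i}$ on the block $Hom(V_i,V_j)$, satisfies $\psi_t(MM') = \psi_t(M)\psi_t(M')$; this is a direct check blockwise. Consequently $\psi_t \circ \rho|_{\pi_1(B^{\ast})}$ is a homomorphism with values in $GL_r(F[t])$, so $\eta_{B^{\ast}}$ is well defined. Evaluation at $t=1$ returns $\rho|_{\pi_1(B^{\ast})}$, which agrees with $a^{\ast}\eta_U$; evaluation at $t=0$ returns the associated-graded representation, which is $Gr^W(\rho/B)|_{\pi_1(B^{\ast})}$, and hence $b^{\ast}\eta_B$. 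These are precisely the defining compatibilities.

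Second, by functoriality of the homotopy pushout, the three representations assemble into a single homotopy class of map $X \to BGL(F)^+_{\mathrm{def}}$, as in the discussion following the construction of $BGL(F)^+_{\mathrm{def}}$. Composing pullback along this map with the cohomology isomorphism of Corollary \ref{def-isom} gives, for every coefficient group $k$, the claimed homomorphism $H^{\ast}(BGL(F),k) \to H^{\ast}(X,k)$. Specialising $k = \comx/\Z$ and pulling back the universal regulator class in $H^{2p-1}(BGL(F),\comx/\Z)$ determined by an embedding $\sigma : F \hookrightarrow \comx$ yields the desired class $\widehat{c}_p^{\mathrm{def}}(\rho,W)$.

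The one place I would be most careful, and which the statement implicitly requires, is independence of the auxiliary choice of splitting $V = \bigoplus V_i$ and of the compatible basis used to identify $GL(V)$ with $GL_r(F) \hookrightarrow GL(F)$. Two such choices differ by an element of the block upper-triangular subgroup preserving $W$, and this subgroup is path-connected to the identity in $GL(V)$; conjugating the three representations by such a path produces a homotopy between the two candidate maps $X \to BGL(F)^+_{\mathrm{def}}$, so the pulled-back cohomology classes, and in particular $\widehat{c}_p^{\mathrm{def}}(\rho,W)$, are well defined. All other steps are formal, so this rigidity under change of splitting is really the only content that is not already packaged in the preceding constructions.
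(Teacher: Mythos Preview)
Your proposal is correct and follows exactly the paper's approach: the paper gives no separate proof of this corollary, treating it as an immediate consequence of the construction of the deformation patching datum $(\eta_U,\eta_B,\eta_{B^{\ast}})$ from $(\rho,W)$ together with the general mechanism producing $X\to BGL(F)^+_{\rm def}$ and the isomorphism of Corollary~\ref{def-isom}. Your first two paragraphs simply spell out what the paper leaves implicit.

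Your third paragraph, on independence of the splitting and basis, is an addition not present in the paper at this point; the paper does not claim independence here and only establishes it later, indirectly, by identifying $\widehat{c}^{\rm def}_p(\rho,W)$ with the patched-connection class (Corollary~\ref{patchingsame}) and then invoking the independence results of \S\ref{sec-rigidity}. One caution about your argument: over an arbitrary field $F$ the phrase ``path-connected to the identity in $GL(V)$'' is not literally meaningful, so the homotopy you want does not come from a topological path in the group. The correct mechanism is either that inner automorphisms of $GL(F)$ induce self-maps of $BGL(F)^+$ homotopic to the identity, or that the block upper-triangular change-of-splitting can itself be realized as a polynomial family via the same $\psi_t$ construction, yielding an algebraic homotopy in $BGL(F[t])^+$. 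Either route repairs the step; the conclusion you draw is correct.
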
 
 

\subsection{Comparison with the classes defined by patched connections}
 
We would like to compare these with the classes defined by the patched connections. 
As described at the beginning of this section, consider compact subsets $U_0\subset U$ and
$B_0\subset B$, retracts of the bigger subsets, such that $U_0$ is the complement of
an open tubular neighborhood of $D$ and $B_0$ is a smaller closed tubular neighborhood.
Consider $B_0^{\ast} \cong S\times [0,1]$, the closure of $X-U_0-B_0$. Thus, 
$X$ is obtained by glueing together $U_0$ and $B_0$ with the cylinder $B_0^{\ast}$.
In this way $X$  can be seen as a homotopy pushout. 

Recall that $S$ is an $S^1$ bundle over $D$.  
 
Suppose $F=\comx $ and we have a deformation patching datum $(\eta _U,\eta _B, \eta _{B^{\ast}})$.
Suppose also that the representations $\eta _U,\eta _B$ (resp. $\eta _{B^{\ast}})$) 
go into a finite rank group  
$GL_r(F) = GL_r(\comx )$ (resp. $GL_r(F[t]) = GL_r(\comx [t])$).  Then we can 
define a patched up connection as follows. Let $\eta _U$ and $\eta _B$ correspond to flat connections on  
$U_0$ and $B_0$. The deformation $\eta _{B^{\ast}}$ into 
$GL_r(\comx [t])$ can be evaluated at $t\in [0,1]$, via the evaluation map $e_t:\comx [t]\rightarrow \comx$.
This gives a family of representations in $GL_r(\comx )$, which may also be viewed as a 
family of flat connections on $S$ parametrized by $t\in [0,1]$. Taking the connection form
to be zero in the $dt$ direction gives a connection
over the 
cylinder $B^{\ast}_0=S\times [0,1]$, a connection which on the endpoints glues together with
the given flat bundles on $U_0$ or $B_0$. Putting them together we get a connection $\nabla ^{\rm def}$ on $X$.  
It has the property that in $U_0$ and $B_0$ it is flat, whereas in $B^{\ast}_0=S\times [0,1]$ it is 
flat along each $t$-level set $S \times \{ t \}$.  
 
Now note that the curvature form  $\Omega = \Omega _{\nabla ^{\rm def}}$ restricts to zero on the $t$-level sets. 
Since these have codimension $1$, it follows that in any local coordinates of the form $(t,x_i)$ 
where $x_i$ are local coordinates on $S$, all terms in $\Omega$ have 
a $dt$, that is there are no terms of the form $dx_i\wedge dx_j$. It follows that $\Omega \wedge \Omega = 0$. 
In particular the Chern forms of $\Omega$ vanish except maybe for the first one.  
 
The differential character $\widehat{c}_p(\nabla ^{\rm def})$  
associated to the connection $\nabla ^{\rm def}$ therefore projects to zero in  
the closed $2p$-forms whenever $p > 1$, so it defines a class in $H^{2p-1}(X, \comx / \Z )$ for any $p>1$. 
 
The two things we need to know are resumed in the following lemmas.  
 
\begin{lemma} 
\label{kclassdefclass}
The class $\widehat{c}_p(\nabla ^{\rm def})$ obtained by using the deformed connection on the cylinder,  
is equal to the regulator class 
$\widehat{c}_p^{\rm def}(\eta _U, \eta _B, \eta _{B^{\ast}})$
defined using the deformation theorem in $K$-theory.  
\end{lemma}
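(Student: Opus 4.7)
The plan is to verify the equality by restricting both classes to each piece of the pushout cover of $X$ given by $U_0$, $B_0$, and the cylinder $B^{\ast}_0 = S \times [0,1]$, and then glue. On each piece, both constructions should reduce to a standard, directly comparable regulator class.

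First I would check the restrictions to $U_0$ and $B_0$. On $U_0$, the deformation connection $\nabla^{\rm def}$ is by construction the flat connection coming from $\eta_U$, so its Cheeger-Simons character is the classical flat regulator $\widehat{c}_p(\eta_U)$. On the $K$-theory side, the classifying map $X \rightarrow BGL(F)^+_{\rm def}$ restricted to $U_0$ factors, up to homotopy, through the copy of $BGL(F)^+$ embedded via $e_1$ in the pushout; by the standard compatibility between the universal regulator on $BGL(F)^+$ and the Cheeger-Simons class of flat bundles (which is in turn the agreement of the Karoubi, Beilinson, and Cheeger-Simons constructions, cf.\ \cite{Es}, \cite{Ka2}), this pullback is also $\widehat{c}_p(\eta_U)$. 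The same argument works on $B_0$ with $\eta_B$.

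Next I would handle the cylinder, which is the substantive step. On $S \times [0,1]$, the connection $\nabla^{\rm def}$ is assembled from the polynomial family of flat representations $t \mapsto e_t \circ \eta_{B^{\ast}}$ with vanishing $dt$-component; as already observed, its curvature satisfies $\Omega \wedge \Omega = 0$, so its Cheeger-Simons character lies in $H^{2p-1}(S \times [0,1],\comx/\Z)$ for $p>1$. On the $K$-theory side, $\eta_{B^{\ast}}$ gives a map $S \times [0,1] \rightarrow BGL(F[t])^+$, and one pulls back the universal regulator class on $BGL(F[t])^+$, which via the deformation theorem corresponds to the universal class on $BGL(F)^+$. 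The claim I need is that this pullback equals the Cheeger-Simons character of $\nabla^{\rm def}$. I would establish this by the variational formula \eqref{varform}: both classes, restricted to any slice $S \times \{t_0\}$, agree with the flat regulator of $e_{t_0}\circ \eta_{B^{\ast}}$; their $t$-derivatives are controlled by $\cP(\tfrac{d}{dt}\nabla^{\rm def}, \Omega, \ldots, \Omega)$, which in turn matches the derivative of the Karoubi regulator along the polynomial deformation by construction of the deformation theorem isomorphism.

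Finally, to assemble the three local equalities into the global statement on $X$, I would work at the level of differential characters rather than just cohomology classes, so as to avoid the Mayer-Vietoris indeterminacy coming from $H^{2p-2}(S,\comx/\Z)$. Concretely, one matches primitives on the overlap region where the glueing is performed; both constructions are natural with respect to the homotopy pushout square of $X$, and both build the global class from the same deformation patching datum $(\eta_U,\eta_B,\eta_{B^{\ast}})$, so the glueing data are identical. The main obstacle is the cylinder comparison: verifying that the Cheeger-Simons character of the assembled non-flat connection really does match the Karoubi-type regulator pulled back through the $K$-theoretic polynomial deformation. Once that compatibility is in hand, the remaining steps are formal naturality.
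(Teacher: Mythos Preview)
Your local comparisons on $U_0$ and $B_0$ are fine, and on the cylinder the comparison is even easier than you suggest: since $S\times [0,1]$ retracts to $S\times\{0\}$, both classes there are determined by their restriction to one slice, where they agree with the flat regulator of $e_0\circ\eta_{B^{\ast}}$. The gap is in the glueing step. You propose to avoid the Mayer--Vietoris indeterminacy from $H^{2p-2}(B^{\ast},\comx/\Z)$ by ``working at the level of differential characters'' and ``matching primitives on the overlap.'' But the $K$-theoretic class $\widehat{c}_p^{\rm def}(\eta_U,\eta_B,\eta_{B^{\ast}})$ is defined purely as the pullback of a cohomology class from $BGL(F)^+_{\rm def}$; it does not come equipped with a differential character, a connection, or any chain-level primitive on the overlap. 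So there is nothing on the $K$-theory side to compare with the Cheeger--Simons primitive of $\nabla^{\rm def}$, and the assertion that ``the glueing data are identical'' has no content as stated.

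The paper handles this by a different route that sidesteps the indeterminacy entirely: it reduces to the \emph{universal} case $X=BGL(F)^+_{\rm def}$, with $U$, $B$, $B^{\ast}$ the open pieces of the cylinder construction. There the deformation theorem says that $U$, $B$, and $B^{\ast}$ all have the homology of $BGL(F)^+$, so the Mayer--Vietoris connecting map vanishes and $H^{\ast}(X,\comx/\Z)\to H^{\ast}(U,\comx/\Z)\oplus H^{\ast}(B,\comx/\Z)$ is injective. Equality then follows from the (easy) agreement on $U$ and $B$, and the general case is obtained by naturality under the classifying map $X\to BGL(F)^+_{\rm def}$. This universal reduction is the missing idea in your argument; without it, you would need to produce a genuine chain-level or differential-character lift of the $K$-theoretic regulator compatible with the pushout, which you have not done.
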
 
\begin{proof}
It suffices to prove this for the universal case where 
$$
U =  BGL(F) \cup ^{BGL(F[t] \times \{ 0\} } \left( BGL(F[t])\times [0,1) \right) ,
$$
$$
B =  BGL(F) \cup ^{BGL(F[t] \times \{ 1 \} } \left( BGL(F[t] )\times (0,1] \right) ,
$$
$$
B^{\ast} = BGL(F[t] )\times (0,1)
$$
and $X = BGL(F)_{\rm def}$ is the homotopy pushout.
In this case we know that the spaces $U$, $B$ and $B^{\ast}$ have the same homology, 
so the connecting map in Mayer-Vietoris is trivial.
Thus we have an exact sequence
$$
0\rightarrow H^{\ast}(X, \comx / \Z ) \rightarrow H^{\ast}(U, \comx / \Z ) \oplus H^{\ast}(B, \comx / \Z ) \rightarrow 
H^{\ast}(B^{\ast}, \comx / \Z ) \rightarrow O.
$$
Now, we the class defined by pullback under the map using the deformation triple in $K$-theory, restricts on $U$ and $B$
to the standard class. The same is true for the class defined by the previous construction. Thus, they are equal
in $H^{\ast}(X, \comx / \Z )$.
\end{proof}
 
\begin{lemma} 
\label{defclasspatch}
Starting with $(\rho , W)$, do the procedure of \S \ref{deformation-filtration} to get $\eta _U$,
$\eta _B$ and $\eta _{B^{\ast}}$. The class $\widehat{c}_p(\nabla ^{\rm def})$ 
defined using this deformation triple, is equal to the class defined in \S \ref{patchsmoothdiv} using a 
patched connection $\nabla ^{\#}$ for $(\rho , W)$.
\end{lemma}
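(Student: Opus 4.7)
The plan is to recognize $\nabla^{\rm def}$ as itself a connection compatible with the same pre-patching collection on $X$ used in \S \ref{patchsmoothdiv} to build $\nabla^{\#}$, and then invoke Lemma \ref{nilinv}. Recall that the pre-patching collection for $(\rho, W)$ consists of the trivial filtration on $U$ with graded connection the flat $\rho$-connection $\nabla$, together with the filtration $W$ on $B$ and the flat connection $\nabla_{1,{\rm Gr}}$ on $Gr^W(\bar E|_B)$ extending $Gr^W(\nabla)$ across $D$.

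The key observation is that the map $\psi_t$ of \S \ref{deformation-filtration} preserves the parabolic subalgebra $End_W(V) \subset End(V)$ and acts as the identity on the Levi quotient $\bigoplus_i End(V_i)$, since the $(i,j)$-block is multiplied by $t^{j-i}$, which equals $1$ exactly when $i = j$. Thus every $\eta_{B^{\ast}}(t) = \psi_t \circ \rho|_{\pi_1(B^{\ast})}$ takes values in the parabolic subgroup $P_W \subset GL(V)$ stabilizing $W$, and all of these representations have the same associated-graded representation, namely $Gr^W(\rho|_{B^{\ast}})$.

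To check compatibility of $\nabla^{\rm def}$ with the pre-patching collection, I would first arrange that the single $\cC^\infty$ splitting of $W$ on $B$ used to transport $\nabla_{1,{\rm Gr}}$ in \S \ref{patchsmoothdiv} is the same as the one used to identify $GL(V) \cong GL_r(F)$ in \S \ref{deformation-filtration}. Then on $U_0$ the filtration is trivial and $\nabla^{\rm def}|_{U_0} = \nabla$ by construction. On $B_0$, $\nabla^{\rm def}|_{B_0}$ is the flat connection associated to $\eta_B = Gr^W(\rho/B)$, which with respect to the chosen splitting manifestly preserves $W$ and induces $\nabla_{1,{\rm Gr}}$ on the graded. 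On the cylinder $B_0^{\ast} = S \times [0,1]$, the connection has vanishing $dt$-component and is flat along each level $S \times \{t\}$ for a representation landing in $P_W$, so it preserves the subbundles of $W$ over the entire cylinder; moreover the induced connection on $Gr^W$ is independent of $t$ and equals the flat connection attached to $Gr^W(\rho|_{B^{\ast}})$, agreeing with the restriction of $\nabla_{1,{\rm Gr}}$.

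With compatibility verified, $\nabla^{\#}$ and $\nabla^{\rm def}$ are both connections compatible with a common pre-patching collection, and Lemma \ref{nilinv} gives
$$
\widehat{c}_p(\bar E, \nabla^{\#}) = \widehat{c}_p(\bar E, \nabla^{\rm def}) \quad \text{in} \quad H^{2p-1}(X, \comx/\Z),
$$
which is the asserted equality. The only technical issue is aligning the splittings used in the two constructions so that both connections live on literally the same $\cC^\infty$-bundle; this is a free choice, and by Corollary \ref{patchedclass} the resulting class is independent of that choice in any event.
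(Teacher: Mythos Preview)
Your proof is correct and follows exactly the paper's approach: the paper's own proof simply asserts that both $\nabla^{\rm def}$ and $\nabla^{\#}$ are compatible with the pre-patching collection associated to $(\rho,W)$ in \S\ref{patchsmoothdiv} and then invokes Lemma~\ref{nilinv}. You have supplied the details the paper omits---in particular the verification that $\psi_t$ fixes the Levi quotient, so that on the cylinder $B_0^{\ast}$ the deformed connection preserves $W$ with $t$-independent associated-graded---and your remark about aligning the two splittings is precisely the bookkeeping needed to make the comparison literal.
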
 
\begin{proof}
The two classes come from connections $\nabla ^{\rm def}$ and
$\nabla ^{\#}$ respectively. Both of these connections
are compatible with the pre-patching collection associated to
$(\rho , W)$ in \S \ref{patchsmoothdiv}.  By 
Lemma \ref{nilinv} the classes are the same.
\end{proof}

With these two lemmas we get that for $p> 1$ the patched connection class is the same as the class defined 
using $K$-theory as above.  
 
\begin{corollary}
\label{patchingsame}
Suppose we are given a filtration triple. Then the regulator classes defined on the one hand
using the map $X\rightarrow BGL(F) _{\rm def}^+$ obtained by using the associated deformation triple; and
on the other hand using the patching construction of Corollary \ref{patchedclass}, coincide.
\end{corollary}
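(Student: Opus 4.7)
My plan is to observe that Corollary \ref{patchingsame} is essentially a formal consequence of the two preceding lemmas, and the proof amounts to composing them via an intermediate class constructed from a carefully chosen connection on the cylindrical model of $X$.

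The first step is to unpack the two constructions being compared. Starting from a filtration triple $(\rho, W)$, the patching construction of Corollary \ref{patchedclass} yields a patched connection $\nabla^{\#}$ on $\ov E$ and the regulator class $\widehat{c}_p(\rho, W) = \widehat{c}_p(\nabla^{\#})$. On the other hand, the construction of \S \ref{deformation-filtration} associates to $(\rho, W)$ a deformation patching datum $(\eta_U, \eta_B, \eta_{B^{\ast}})$, from which one obtains the $K$-theoretic class $\widehat{c}_p^{\rm def}(\eta_U, \eta_B, \eta_{B^{\ast}})$ via the map $X \rightarrow BGL(F)^+_{\rm def}$ and pullback of the universal regulator.

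The bridge between these two constructions is the intermediate connection $\nabla^{\rm def}$ built on the cylindrical decomposition $X = U_0 \cup B_0^{\ast} \cup B_0$: namely one takes the flat connections corresponding to $\eta_U$ and $\eta_B$ on $U_0$ and $B_0$ respectively, and on $B_0^{\ast} = S \times [0,1]$ one uses the one-parameter family of flat connections on $S$ obtained by evaluating $\eta_{B^{\ast}} \in GL_r(\comx[t])$ at $t \in [0,1]$, with no $dt$-component. As argued in the paragraph preceding Lemma \ref{kclassdefclass}, the curvature of $\nabla^{\rm def}$ squares to zero since it has a $dt$-factor on the cylinder and vanishes elsewhere, so $\widehat{c}_p(\nabla^{\rm def}) \in H^{2p-1}(X, \comx/\Z)$ for $p > 1$.

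The proof now proceeds by chaining the two lemmas. By Lemma \ref{kclassdefclass}, the $K$-theoretic class equals the class of the deformed connection:
\[
\widehat{c}_p^{\rm def}(\eta_U, \eta_B, \eta_{B^{\ast}}) = \widehat{c}_p(\nabla^{\rm def}).
\]
By Lemma \ref{defclasspatch}, this in turn equals the patched-connection class:
\[
\widehat{c}_p(\nabla^{\rm def}) = \widehat{c}_p(\nabla^{\#}) = \widehat{c}_p(\rho, W).
\]
Composing these gives the equality claimed in Corollary \ref{patchingsame}. There is no substantive obstacle here, since the serious work has already been done in the two preceding lemmas: Lemma \ref{kclassdefclass} uses the universal-case Mayer--Vietoris argument together with the acyclicity provided by the deformation theorem, while Lemma \ref{defclasspatch} uses the fact that both $\nabla^{\rm def}$ and $\nabla^{\#}$ are connections compatible with the same pre-patching collection, so they induce the same Chern--Simons class by Lemma \ref{nilinv}. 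The only point worth checking explicitly is that $\nabla^{\rm def}$ is indeed compatible with the pre-patching collection associated to $(\rho, W)$ in \S \ref{patchsmoothdiv}, i.e.\ that on $B$ it preserves the filtration $W$ and induces $Gr^W(\rho/B)$ on the graded; this follows from the construction of $\eta_{B^{\ast}}$ via the scaling map $\psi_t$, which is upper-triangular for each $t$ and reduces to the associated-graded at $t = 0$.
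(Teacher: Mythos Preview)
Your proof is correct and follows exactly the paper's own argument: chain Lemma \ref{kclassdefclass} and Lemma \ref{defclasspatch} through the intermediate class $\widehat{c}_p(\nabla^{\rm def})$, then use that $\widehat{c}_p(\nabla^{\#})$ is by definition $\widehat{c}_p(\rho /X)$. The extra remarks you add about compatibility of $\nabla^{\rm def}$ with the pre-patching collection are already contained in the proof of Lemma \ref{defclasspatch}, but do no harm.
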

\begin{proof}
We have
$$
\widehat{c}_p^{\rm def}(\eta _U, \eta _B, \eta _{B^{\ast}}) = 
\widehat{c}_p(\nabla ^{\rm def}) = \widehat{c}_p(\nabla ^{\#}) = \widehat{c}_p(\rho /X).
$$
The first equality is by
Lemma \ref{kclassdefclass}, 
the second equality by Lemma \ref{defclasspatch}, and the third is the definition of
$\widehat{c}_p(\rho /X)$. 
\end{proof}

\begin{remark}
\label{unclear-def}
The above argument is another place where it becomes unclear how to generalize our procedure
to the case of a normal crossings divisor. Near the codimension $k$ pieces of the stratification 
of $D$, the ``collar'' looks like $S\times [0,1]^k$. However, if we envision a $k$-variable
deformation, then the argument saying that the higher Chern forms vanish, no longer works.
This then is another reason why we are restricting to the case of a smooth divisor in the present paper.
\end{remark}


\section{Hermitian $K$-theory and variations of Hodge structure}\label{hermitian K-theory}


In order to prove that the volume invariants vanish in the case of variations of Hodge structure,
Reznikov used a direct calculation of the space of invariant polynomials on a group of Hodge type.
In order to apply this idea to the extended regulators, we use a variant of the previous 
deformational construction for hermitian $K$-theory.


\subsection{Hermitian $K$-theory}


Start by recalling some of the basics of hermitian $K$-theory. See for example \cite{Karoubi}.

We work with commutative rings $A$ with involution $a\mapsto \overline{a}$ preserving the product. 
The basic example is $\comx$ with
the complex conjugation involution. 
Given an $A$-module $V$ we denote by $\overline{V}$ the
same set provided with the conjugate $A$-module structure.
On the other hand, denote by $V^{\ast}$ the usual dual module of a projective $A$-module. 
Note that we have a natural isomorphism 
$$
(\overline{V})^{\ast} \cong \overline{(V^{\ast})}
$$
and these will be indiscriminately noted $\overline{V}^{\ast}$. Either one may be viewed as the module of
antilinear homomorphisms $\lambda : V\rightarrow A$, that is such that $\lambda (av)= \overline{a}\lambda (v)$.

An {\em hermitian pairing} is a morphism 
$$
h : V\rightarrow \overline{V}^{\ast}
$$
which may be interpreted as a form 
$$
u,v\mapsto \langle u,v\rangle _h =h(u)(\overline{v}) \in A
$$
satisfying the properties
$$
\langle a u , v\rangle _h = a \langle u , v\rangle _h, \;\;\; \langle u , av\rangle _h = \overline{a} \langle u , v\rangle _h .
$$
Fix $\epsilon = \pm 1$. 
An {\em $\epsilon$-hermitian module} over $A$ is a pair $(V,h)$ consisting of a projective $A$-module $V$ provided 
with an hermitian pairing $h$ such that 
$$
\langle v, u\rangle _h = \epsilon \overline{ \langle u , v\rangle _h}.
$$

If there exists $i\in A$ with $i ^2= 1$ and $\overline{i}= -i$ and $h$ is an $\epsilon$-hermitian pairing then
$ih$ is a $-\epsilon$-hermitian pairing. So, in this case the distinction between the two values of $\epsilon$ disappears.
This happens for the rings we consider.  

If $V$ is any $A$-module then the {\em hyperbolic $\epsilon$-hermitian $A$-module} is defined by 
$H^{\epsilon}(V) = V\oplus \overline{V}^{\ast}$ with $h$ interchanging the factors with a sign determined by $\epsilon$. 
Put $H^{\epsilon}_{n,n}:= H_{\epsilon}(A^n)$. 

Let $O(V,h)$ be the group of automorphisms of the $\epsilon$-hermitian $A$-module 
$(V,h)$. Let $O^{\epsilon}_{n,n}(A) := O(H^{\epsilon}_{n,n})$ and let $O^{\epsilon}_{\infty , \infty}(A)$ 
be the direct limit of these groups for the natural inclusion maps as $n\rightarrow \infty$. It has a perfect commutator subgroup just as 
is the case for $GL_{\infty}(A)$, so we can make the {\em plus construction} 
$$
BO^{\epsilon}_{\infty , \infty}(A) ^+ .
$$
Karoubi defines the Quillen-Milnor $L$-groups by 
$$
L^{\epsilon}_n(A) := \pi _n BO^{\epsilon}_{\infty , \infty}(A) ^+.
$$
Recall that 
$$
K_n(A):= \pi _nBGL_{\infty}(A) ^+.
$$

The hyperbolic construction gives a map
$$
H : GL_{\infty}(A) \rightarrow O^{\epsilon}_{\infty , \infty}(A), \;\; \mbox{hence}\;\;
H^+ : BGL_{\infty}(A)^+ \rightarrow BO^{\epsilon}_{\infty , \infty}(A) ^+,
$$
and on the other hand forgetting the hermitian form gives a map
$$
F:  O^{\epsilon}_{\infty , \infty}(A) \rightarrow GL_{\infty}(A), \;\; \mbox{hence}\;\;
F^+ : BO^{\epsilon}_{\infty , \infty}(A) ^+ \rightarrow BGL_{\infty}(A)^+ .
$$
These give maps between the $K$-groups and the $L$-groups: 
$$
H: K_n(A)\rightarrow L^{\epsilon}_n(A),
$$
$$
F: L^{\epsilon}_n(A)\rightarrow K_n(A).
$$

Karoubi considers the cokernel 
$$
W^{\epsilon}_n(A):= {\rm coker}\left( H: K_n(A)\rightarrow L^{\epsilon}_n(A) \right) ,
$$
and on \cite[page 392, Corollaire 5.8]{Karoubi} he defines $\overline{W}^{\epsilon}_n(A)$ by inverting the prime $2$. 
The polynomial ring $A[x]$ has an involution extending that of $A$, defined by $\overline{x}= x$. 
One  of his main results is the following:

\begin{theorem}[{Karoubi \cite{Karoubi}, Corollaire 5.11}]
\label{karoubi}
The inclusion $A\rightarrow A[x]$ induces isomorphisms $\overline{W}^{\epsilon}_n(A) \cong \overline{W}^{\epsilon}_n(A[x])$. 
\end{theorem}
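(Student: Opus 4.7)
The plan is to exploit the natural map $\overline{W}^{\epsilon}_n(A) \to \overline{W}^{\epsilon}_n(A[x])$ induced by the inclusion $c : A \hookrightarrow A[x]$, using the evaluation $e_0 : A[x] \to A$ (well-defined as a map of rings with involution since $\overline{x}=x$) to get a retraction $e_0 \circ c = {\rm id}_A$. This immediately yields a direct-sum splitting
\begin{equation*}
\overline{W}^{\epsilon}_n(A[x]) = \overline{W}^{\epsilon}_n(A) \oplus N\overline{W}^{\epsilon}_n(A),
\end{equation*}
where $N\overline{W}^{\epsilon}_n(A) := \ker(e_{0,\ast})$. The theorem is therefore equivalent to the statement $N\overline{W}^{\epsilon}_n(A) = 0$, and this is what I would aim to prove.

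The main tool is Karoubi's fundamental long exact sequence relating ordinary and hermitian $K$-theory,
\begin{equation*}
\cdots \to K_n(A) \xrightarrow{H} L^{\epsilon}_n(A) \xrightarrow{} U^{\epsilon}_n(A) \to K_{n-1}(A) \to \cdots,
\end{equation*}
in which the image of $H$ is precisely what one kills when passing from $L^{\epsilon}_n$ to $W^{\epsilon}_n$. First I would apply this sequence to both $A$ and $A[x]$, and then take the ``$N$-part'' of each, i.e.\ the kernel of the map to the sequence for $A$ obtained by $e_0$. Since the forgetful map $F : L^{\epsilon}_n \to K_n$ satisfies $F \circ H = 1 + \tau$ with $\tau$ the involution $[M] \mapsto [\overline{M}^{\ast}]$ on $K$-theory, inverting $2$ turns $(1+\tau)/2$ into an idempotent. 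This gives compatible splittings of $K_n(A)[1/2]$ and $L^{\epsilon}_n(A)[1/2]$ into $\pm$-eigenspaces of $\tau$, and identifies $\overline{W}^{\epsilon}_n(A)$ with one of the eigenspaces modulo a controlled contribution from ordinary $K$-theory.

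With these splittings in hand, the key step is to show that on the nilpotent pieces $NK_n(A)$ and $NL^{\epsilon}_n(A)$, the hyperbolic map $H$ becomes surjective after inverting $2$. One way to see this is to note that for any class $[V,h] \in NL^{\epsilon}_n(A)[1/2]$, one can deform $h$ along the affine line in the variable $x$ to produce an explicit presentation by hyperbolic forms; more precisely, the symmetrization/antisymmetrization construction together with the degree filtration of $A[x]$-modules give an explicit homotopy showing that $[V,h] - H([V])/2$ lies in a submodule which becomes trivial after inverting $2$. Chasing this through the long exact sequence yields $N\overline{W}^{\epsilon}_n(A) = 0$.

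The main obstacle, and where Karoubi's approach has to work hard, is step three: the failure of $K$-theory itself to be $\A^1$-homotopy invariant means that $NK_n(A) \ne 0$ in general, so we cannot naively quote Quillen's theorem. The point of inverting $2$ is precisely that it trivializes the obstruction living in $NK_n(A)/H$-image, which otherwise would obstruct homotopy invariance of $W^{\epsilon}$. I would expect the verification that the $\pm$-eigenspace decomposition of $\tau$ is compatible with the polynomial structure (noting that $\tau$ fixes $x$, so does \emph{not} simply negate $NK_n$) to be the technically delicate part, requiring a careful analysis of hermitian forms over $A[x]$ by degree.
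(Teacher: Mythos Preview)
The paper does not prove this theorem at all: it is quoted as a result of Karoubi (Corollaire~5.11 of \cite{Karoubi}) and used as a black box. So there is no ``paper's own proof'' to compare against; the statement functions purely as an input.

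That said, let me comment on your sketch. The opening moves are sound: the evaluation $e_0$ gives a retraction, hence a splitting $\overline{W}^{\epsilon}_n(A[x]) \cong \overline{W}^{\epsilon}_n(A)\oplus N\overline{W}^{\epsilon}_n(A)$, and reducing to $N\overline{W}^{\epsilon}_n(A)=0$ is the right reformulation. Invoking the fundamental sequence and the relation $FH=1+\tau$ to split into $\pm$-eigenspaces after inverting $2$ is also standard and correct. Where the argument breaks down is your ``key step'': the claim that on the $N$-pieces the hyperbolic map becomes surjective after inverting $2$ is asserted but not proved. The sentence about ``deforming $h$ along the affine line'' and the ``degree filtration'' is too vague to carry weight; there is no reason a hermitian form over $A[x]$ that vanishes at $x=0$ should admit an explicit hyperbolic presentation just from degree considerations, and you correctly note yourself that $NK_n(A)$ is nonzero in general, so you cannot lean on homotopy invariance of $K$-theory.

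Karoubi's actual proof goes through his periodicity theorem for hermitian $K$-theory (the main result of \cite{Karoubi}), which relates the $L$-groups for the two signs $\epsilon$ via suspension and identifies the obstruction to homotopy invariance with a $2$-primary term. This is substantially deeper than a direct diagram chase on the fundamental sequence, and is why the result is stated only after inverting $2$. Your outline gestures at the right ingredients but does not supply the mechanism that makes the $N$-part vanish; for that you would need either to reproduce Karoubi's periodicity argument or to cite it.
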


The following corollary was undoubtedly considered obvious in \cite{Karoubi} but needs to be stated.

\begin{corollary}
\label{Linvariance}
Letting $\Z ':= \Z [\frac{1}{2}]$ the inclusion $A\rightarrow A[x]$ induces isomorphisms on $L$-theory 
$$
L^{\epsilon}_n(A)\otimes \Z ' \stackrel{\cong}{\rightarrow}  L^{\epsilon}_n(A[x]) \otimes \Z ' .
$$
\end{corollary}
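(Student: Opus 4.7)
The plan is to sandwich $L$-theory between $K$-theory and the Karoubi group $\overline{W}^{\epsilon}$. By the very definition of $W^{\epsilon}_n(A)$ as a cokernel, we have a short exact sequence
$$
K_n(A) \xrightarrow{H} L^{\epsilon}_n(A) \to W^{\epsilon}_n(A) \to 0.
$$
Since $\Z ' = \Z[\tfrac{1}{2}]$ is flat over $\Z$, tensoring preserves this sequence, and after tensoring the rightmost term becomes $\overline{W}^{\epsilon}_n(A)$. The first step is to extend this to (a fragment of) Karoubi's fundamental long exact sequence of hermitian $K$-theory,
$$
\cdots \to K_{n+1}(A)\otimes \Z ' \to L^{\epsilon}_{n+1}(A)\otimes \Z ' \to \overline{W}^{\epsilon}_{n+1}(A) \to K_n(A)\otimes \Z ' \to L^{\epsilon}_n(A)\otimes \Z ' \to \overline{W}^{\epsilon}_n(A) \to \cdots
$$
which is standard in the Karoubi setup: it arises by combining the hyperbolic map with its adjoint ``forget the form'' map and splicing the resulting short exact sequences. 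This sequence is natural in the ring with involution.

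The second step is to form, via functoriality of all of the above under the map $A\hookrightarrow A[x]$ of rings with involution (where $x\mapsto x$), the evident ladder of long exact sequences whose vertical arrows are induced by $A\hookrightarrow A[x]$. On the $\overline{W}$--columns, Theorem \ref{karoubi} gives isomorphisms. On the $K$--columns, I invoke Quillen's fundamental theorem, $K_n(A)\cong K_n(A[x])$, which applies since in the intended application $A$ is a field (with its complex-conjugation involution inherited from an embedding into $\comx$) and is in particular regular.

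The third step is the usual five-lemma argument: with four out of five vertical arrows being isomorphisms at every position, the remaining arrow $L^{\epsilon}_n(A)\otimes \Z ' \to L^{\epsilon}_n(A[x])\otimes \Z '$ is an isomorphism as well.

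The main point requiring care is the first step, namely having a long exact sequence rather than just the short exact one defining $W^{\epsilon}_n$: the cokernel definition in the excerpt gives surjectivity on the right but no control of the kernel of $H$. One must either extract the long exact sequence from Karoubi's machinery (where it sits alongside the ``coKaroubi'' group measuring the kernel of $H$) or, equivalently, interpret $L^{\epsilon}_n(A)\otimes \Z '$ as the homotopy of $BO^{\epsilon}_{\infty,\infty}(A)^+$ with $2$ inverted and argue that both the fiber and cofiber of the hyperbolic map $BGL(A)^+\to BO^{\epsilon}_{\infty,\infty}(A)^+$ are $x$-homotopy invariant (the cofiber by Theorem \ref{karoubi}, the fiber by the analogous statement for the forgetful map, or directly by Quillen plus Theorem \ref{karoubi}). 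Once this homological framework is set up, the rest is a mechanical diagram chase.
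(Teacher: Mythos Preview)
Your approach is correct but imports more than is needed. You invoke Karoubi's full long exact sequence to control the kernel of the hyperbolic map, and then run a five-lemma. The paper avoids this entirely by a more elementary observation: evaluation at $0$ gives a retraction $A\hookrightarrow A[x]\xrightarrow{e_0}A$ of rings with involution, so by functoriality the map $L^{\epsilon}_n(A)\to L^{\epsilon}_n(A[x])$ is already a \emph{split injection}. With injectivity in hand for free, only the right-exact sequence
\[
K_n(-)\otimes\Z'\;\longrightarrow\; L^{\epsilon}_n(-)\otimes\Z'\;\longrightarrow\;\overline{W}^{\epsilon}_n(-)\;\longrightarrow\;0
\]
is required: the outer vertical maps are isomorphisms (Quillen on the left, Theorem~\ref{karoubi} on the right), and a one-line diagram chase then gives surjectivity of the middle map.

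What each approach buys: your five-lemma argument is the ``generic'' one and would work even without a retraction, at the cost of citing the long exact sequence (which you correctly flag as the delicate step). The paper's argument is self-contained relative to the statements actually recalled in the text---only the cokernel definition of $W^{\epsilon}_n$, Theorem~\ref{karoubi}, and $K$-theoretic homotopy invariance---and exploits the specific shape of the inclusion $A\hookrightarrow A[x]$.
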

\begin{proof}
Evaluation at $0$ gives a splitting $A\rightarrow A[x]\stackrel{e_0}{\rightarrow} A$,
compatible with the hermitian structure. It follows that the morphism 
$$
L^{\epsilon}_n(A) \rightarrow  L^{\epsilon}_n(A[x]) 
$$
is  a split inclusion. Now we have a diagram with horizontal right exact sequences 
$$
\begin{array}{cccccc}
K_n(A)\otimes \Z ' & \rightarrow & L^{\epsilon}_n(A)\otimes \Z ' & \rightarrow &  \overline{W}^{\epsilon}_n(A) & \rightarrow 0 \\
\downarrow & &  \downarrow & & \downarrow & \\
K_n(A[x])\otimes \Z ' & \rightarrow & L^{\epsilon}_n(A[x])\otimes \Z ' & \rightarrow &  \overline{W}^{\epsilon}_n(A[x]) & \rightarrow 0 
\end{array}
$$
where the left vertical arrow is an isomorphism by the fundamental homotopy invariance theorem in $K$-theory,
the middle arrow is a split inclusion, and the right vertical arrow is an isomorphism by Theorem \ref{karoubi}. 
It follows that the middle vertical arrow is surjective, so it is an isomorphism.
\end{proof}

\begin{corollary}
\label{hermitianhomotopy}
For any ring with involution $A$, the map 
$$
BO^{\epsilon}_{\infty , \infty}(A) ^+ \rightarrow BO^{\epsilon}_{\infty , \infty}(A[x]) ^+
$$
induces a homotopy equivalence after localizing away from the prime $2$ (in particular, for rational homotopy theory). The same is true of the evaluation maps
$$
e_{0}, e_{1} : BO^{\epsilon}_{\infty , \infty}(A[x]) ^+ \rightarrow BO^{\epsilon}_{\infty , \infty}(A) ^+
$$
\end{corollary}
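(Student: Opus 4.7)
The plan is to deduce the corollary from Corollary \ref{Linvariance} by a standard Whitehead/localization argument, using that plus-constructions of stable classifying spaces are simple.

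First, by the very definition of the $L$-theory, $\pi_n\bigl(BO^{\epsilon}_{\infty,\infty}(R)^+\bigr) = L^{\epsilon}_n(R)$ for every ring with involution $R$ and every $n\geq 1$. Applied to $R=A$ and $R=A[x]$, Corollary \ref{Linvariance} tells us that the inclusion of constants $A\hookrightarrow A[x]$ induces a map
$$
BO^{\epsilon}_{\infty,\infty}(A)^+ \longrightarrow BO^{\epsilon}_{\infty,\infty}(A[x])^+
$$
which is an isomorphism on $\pi_n(-)\otimes \Z[\tfrac{1}{2}]$ for every $n$.

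Second, I would invoke the fact that the target and source are simple spaces. The orthogonal sum operation makes $BO^{\epsilon}_{\infty,\infty}(R)$ into a homotopy-commutative $H$-space, and the plus construction preserves the $H$-space structure; indeed, just as in the linear case the plus-construction here is actually an infinite loop space in the sense of hermitian $K$-theory. In particular it is a nilpotent space and one can form its Bousfield localization at $\Z[\tfrac{1}{2}]$. For nilpotent $H$-spaces, a map that is an isomorphism on $\pi_\ast\otimes\Z[\tfrac{1}{2}]$ is a homotopy equivalence after inverting $2$ (the Whitehead theorem for localizations of nilpotent spaces, cf.\ Bousfield--Kan / Hilton--Mislin--Roitberg). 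This proves the assertion for the inclusion map.

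Third, for the evaluation maps $e_0$ and $e_1$: the composition $A\hookrightarrow A[x]\xrightarrow{e_i} A$ equals $\mathrm{id}_A$, and each $e_i$ respects involutions and hence induces a map on $BO^{\epsilon}_{\infty,\infty}(-)^+$. Applying the plus construction,
$$
BO^{\epsilon}_{\infty,\infty}(A)^+ \longrightarrow BO^{\epsilon}_{\infty,\infty}(A[x])^+ \xrightarrow{e_i^+} BO^{\epsilon}_{\infty,\infty}(A)^+
$$
is homotopic to the identity, and we have just shown that the first arrow is an equivalence after inverting $2$. By the two-out-of-three property, $e_i^+$ is also an equivalence after inverting $2$.

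The only potentially delicate point is the second step: one needs to know that $BO^{\epsilon}_{\infty,\infty}(R)^+$ is nilpotent (equivalently, simple), so that $\pi_\ast$-isomorphism after localization implies homotopy equivalence after localization. This is standard but is the one place where we use more than the bare definition of $L_n^\epsilon$; it follows from the $H$-space structure given by orthogonal sum (as for $BGL^+$), and is in fact implicit in Karoubi's set-up of hermitian $K$-theory as a generalized cohomology theory.
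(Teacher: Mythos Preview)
Your proposal is correct and is exactly the argument the paper leaves implicit: the corollary is stated without proof, as an immediate consequence of Corollary~\ref{Linvariance} and the identification $\pi_n BO^{\epsilon}_{\infty,\infty}(R)^+ = L^{\epsilon}_n(R)$. You have simply spelled out the standard Whitehead/localization step (using the $H$-space structure to know the spaces are simple) and the two-out-of-three argument for the evaluation maps, which the authors evidently regarded as routine.
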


Let $BO^{\epsilon}_{\infty , \infty}(A) ^+_{\rm def}$ denote the homotopy pushout of the evaluation maps $e_0, e_1$
appearing in the previous corollary. Then also the map 
$$
BO^{\epsilon}_{\infty , \infty}(A) ^+ \rightarrow BO^{\epsilon}_{\infty , \infty}(A)_{\rm def} ^+
$$
is an equivalence after localizing away from $2$ and in particular in rational homotopy theory. 

Note that for each evaluation map $e_i$, $i=0,1$ there is a commutative diagram
$$
\begin{array}{ccc}
BO^{\epsilon}_{\infty , \infty}(A[x] ) ^+ & \rightarrow & BGL_{\infty}(A[x])^+ \\
\downarrow & & \downarrow \\
BO^{\epsilon}_{\infty , \infty}(A ) ^+ & \rightarrow & BGL_{\infty}(A)^+
\end{array}
$$
where the vertical maps are the evaluation maps. This gives a commutative diagram of homotopy pushout squares
which we don't write down, in which the pushout map is 
$$
BO^{\epsilon}_{\infty , \infty}(A)_{\rm def} ^+  \rightarrow  BGL_{\infty}(A)_{\rm def}^+
$$
which is compatible with the rest. 
 

\subsection{Hermitian deformation patching}


We now apply this to the case $A=\comx$ with the involution being complex conjugation. 
Since $i\in \comx$ by the above remark 
the choice of $\epsilon$ doesn't matter and we now take $\epsilon = 1$ and drop it from notation. 

The group $O_{n,n}(\comx )$ is
more commonly known as $U(n,n)$, the unitary group of the hermitian form of signature $n,n$. 
This is because the natural hermitian form on hyperbolic
space $H(\comx ^n)$ has signature $(n,n)$. Thus
$$
O_{\infty , \infty }(\comx ) = \lim _{\rightarrow} U(n,n).
$$
Note also that for any $p,q$ we have $U(p,q)\subset U(n,n)$ for $n\geq {\rm max}(p,q)$ so we can also write
$$
O_{\infty , \infty }(\comx ) = \lim _{\rightarrow} U(p,q).
$$
So, if $(X,x)$ is a path-connected pointed space with a representation $\rho : \pi _1(X,x) \rightarrow U(p,q)$ for some $p,q$,
then we obtain a map 
$$
X\rightarrow BO_{\infty , \infty}(\comx ) \rightarrow BO_{\infty , \infty}(\comx ) ^+.
$$

The patching construction as previously done applies in this case too. 

For a given $p,q$ let $V$ be the $\comx$-vector space with hermitian form $h$ of signature $p,q$. 
Let $V[t]:= V\otimes _{\comx}\comx [t]$ be its extension of scalars to $\comx [t]$. 
Let $O_{p,q}(\comx [t])$ denote the group of hermitian automorphisms of $V[t]$.
For $p=q=n$ this coincides with the previous notation $O_{n,n}(\comx [t])$ and 
For any $n\geq {\rm max}(p,q)$ we have an inclusion $O_{p,q}(\comx [t])\subset O_{n,n}(\comx [t])$
obtained by direct sum with a form of signature $n-p,n-q$. 

Suppose we have 
a diagram of pointed path-connected spaces
$$ 
(U,p)\stackrel{b}{\leftarrow} (B^{\ast}, p) \stackrel{c}{\rightarrow} (B, p) 
$$ 
together with representations  
$$ 
\eta _U : \pi _1(U, p) \rightarrow U(p,q), 
$$ 
$$ 
\eta _B : \pi _1(B, p) \rightarrow U(p,q), 
$$ 
and 
$$ 
\eta _{B^{\ast}} : \pi _1(B^{\ast},p) \rightarrow O_{p,q}(\comx [t]) 
$$ 
such that  
$$ 
e_1\circ \eta _{B^{\ast}} = b^{\ast}(\eta _U) \;\;\mbox{and} \;\;  
e_0\circ \eta _{B^{\ast}} = c^{\ast}(\eta _B) . 
$$ 
In other words, we again have representations on $U$ and $B$, plus a deformation between their 
restrictions to $B^{\ast}$.  
We call this an {\em hermitian deformation triple}. 

As before, we suppose given the subsets $U_0$, $B_0$ and $B^{\ast}_0 \cong S\times [0,1]$, and
$X:= U\cup ^{S\times [0,1]}B$ is the homotopy pushout, so we obtain a map 
$$
X\rightarrow BO_{\infty , \infty}(\comx )_{\rm def} ^+.
$$

\begin{lemma}\label{classifyingmap}
Composing the above representations with the inclusions $U(p,q)\subset GL(p+q,\comx )$ or
$O_{p,q}(\comx [t]) \subset GL(p+q, \comx [t])$ we obtain from our hermitian deformation triple 
a usual deformation patching datum in the previous sense. This in turn gives a map 
$$
X\rightarrow BGL(\comx )_{\rm def}^+ ,
$$
which is homotopy equivalent to the composition of
$$
X\rightarrow BO_{\infty , \infty}(\comx )_{\rm def} ^+ \stackrel{F}{\rightarrow}
BGL(\comx )_{\rm def}^+ .
$$
\end{lemma}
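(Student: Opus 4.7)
The plan is to deduce this purely from functoriality of the homotopy pushout construction applied to a compatible diagram of forgetful maps. First, I would note that the hermitian deformation triple $(\eta_U,\eta_B,\eta_{B^{\ast}})$ itself induces, by exactly the construction of \S 6.2 but replacing $GL$ everywhere by the appropriate hermitian group, classifying maps
$$
U \to BU(p,q) \subset BO_{\infty,\infty}(\comx), \quad B \to BU(p,q) \subset BO_{\infty,\infty}(\comx),
$$
$$
B^{\ast} \to BO_{p,q}(\comx[t]) \subset BO_{\infty,\infty}(\comx[t]),
$$
together with homotopies arising from the compatibility relations $e_1\circ\eta_{B^{\ast}}=b^{\ast}(\eta_U)$ and $e_0\circ\eta_{B^{\ast}}=c^{\ast}(\eta_B)$. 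By the universal property of the homotopy pushout $BO_{\infty,\infty}(\comx)_{\rm def}^+$, obtained as the pushout of the two evaluation maps, these data assemble into the canonical map $X\to BO_{\infty,\infty}(\comx)_{\rm def}^+$.

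Next I would use the commutative square of forgetful maps recorded just before the statement of the lemma,
$$
\begin{array}{ccc}
BO_{\infty,\infty}(\comx[t])^+ & \stackrel{F}{\longrightarrow} & BGL_{\infty}(\comx[t])^+ \\
\downarrow e_i & & \downarrow e_i \\
BO_{\infty,\infty}(\comx)^+ & \stackrel{F}{\longrightarrow} & BGL_{\infty}(\comx)^+
\end{array}
$$
for $i=0,1$, which is compatible with the rest of the pushout diagrams. Applying the pushout functor to this commutative cube yields a (strictly) commutative square
$$
\begin{array}{ccc}
X & \to & BO_{\infty,\infty}(\comx)_{\rm def}^+ \\
\| & & \downarrow F \\
X & \to & BGL_{\infty}(\comx)_{\rm def}^+.
\end{array}
$$
The map on the top is the one produced in the previous paragraph, while the map on the bottom is, by construction, exactly the map associated to the usual deformation patching datum obtained by composing $\eta_U,\eta_B,\eta_{B^{\ast}}$ with the inclusions $U(p,q)\hookrightarrow GL(p+q,\comx)$ and $O_{p,q}(\comx[t])\hookrightarrow GL(p+q,\comx[t])$: indeed both are produced by applying the universal property of the same homotopy pushout $X$ to the same composed representations.

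The commutativity of this square is exactly the desired statement. The only subtlety — and the one place where some care is needed — is to ensure that the homotopies witnessing the compatibility of the hermitian data are carried by $F$ to the homotopies used to form the usual deformation patching map, so that the two uses of the universal property of the pushout genuinely produce the same homotopy class and not merely two maps that a posteriori happen to coincide on generators. This is a routine check since $F$ is induced by a strict group homomorphism at the level of classifying spaces and commutes strictly with the evaluation maps $e_0,e_1$, so the naturality of homotopy pushouts applies directly. No further argument is required.
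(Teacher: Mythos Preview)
Your proof is correct and follows essentially the same approach as the paper: the paper's proof is a single sentence stating that the result ``comes from the compatibility of the homotopy pushout squares used to define $BO_{\infty,\infty}(\comx)_{\rm def}^+$ and $BGL(\comx)_{\rm def}^+$,'' and your argument is simply a careful unpacking of exactly this compatibility via functoriality of the homotopy pushout. Your additional remark about the homotopies being carried correctly by the strict group homomorphism $F$ is a reasonable point of care, but does not depart from the paper's intended reasoning.
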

\begin{proof}
This comes from the compatibility of the homotopy pushout squares used to define
$BO_{\infty , \infty}(\comx )_{\rm def} ^+ $ and 
$BGL(\comx )_{\rm def}^+$. 
\end{proof}

Now the key part of the present argument comes from Reznikov's fundamental observation about the cohomology degrees
of generators of the cohomology theories on both sides.
Recall that the {\em Borel volume regulators} are classes 
$$
r^{\rm Bor}_p \in H^{2p-1}(BGL(\comx )^+, \R ).
$$
These correspond to the imaginary parts of the $\comx / \Z$ regulators we are studying. 

We can repeat all the constructions in \S \ref{kclassdef-sec} and in the present \S \ref{hermitian K-theory} for the special linear group $SL(\comx)$.
As in \cite[p.377, \S 2.7]{Re2}, we will eventually reduce to the case when we look
at $SL_r(\comx)$-valued representations. Thus Lemma \ref{classifyingmap} will give us maps
$$
X\rightarrow BSL(\comx )_{\rm def}^+ ,
$$
homotopy equivalent to the composition of
$$
X\rightarrow BSO_{\infty , \infty}(\comx )_{\rm def} ^+ \stackrel{F}{\rightarrow}
BSL(\comx )_{\rm def}^+ .
$$

\begin{lemma}
For any $p>1$ the pullback of $r^{\rm Bor}_p$ via the map 
$BSO_{\infty , \infty}(\comx )^+ \rightarrow
BSL(\comx )^+$ is zero. 
\end{lemma}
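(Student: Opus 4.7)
The plan is to reduce the vanishing to a classical fact about invariant forms on a Hermitian symmetric space, following Reznikov's observation in \cite{Re2}.

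First I would recall Borel's theorem on the stable real cohomology of $SL(\comx)$ as a discrete group: in the stable range, $r^{\rm Bor}_p$ lies in the image of the comparison map from continuous cohomology,
\[
H^{2p-1}_{\rm cts}(SL_m(\comx), \R) \lrar H^{2p-1}(SL_m(\comx)^{\delta}, \R),
\]
for $m$ large. By van Est's theorem, the left-hand side identifies with the relative Lie algebra cohomology $H^{2p-1}(\mathfrak{sl}_m(\comx)_{\R}, SU(m); \R)$, i.e. with the de Rham cohomology of the complex of $SU(m)$-invariant forms on the symmetric space $X_m := SL_m(\comx)/SU(m)$. Under this identification $r^{\rm Bor}_p$ corresponds to the imaginary part of a standard symmetrized trace form.

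Next I would analyze the forgetful map $F : BSO_{\infty,\infty}(\comx)^+ \to BSL(\comx)^+$ at the Lie-group level: it comes from the inclusions $SU(n,n) \hookrightarrow SL_{2n}(\comx)$. By naturality of van Est and of Borel's comparison map, the pullback of $r^{\rm Bor}_p$ to $H^{2p-1}_{\rm cts}(SU(n,n), \R)$ corresponds to the restriction of the invariant form representing $r^{\rm Bor}_p$ along the totally geodesic embedding
\[
Y_n := SU(n,n) / K_n \hookrightarrow SL_{2n}(\comx)/SU(2n) = X_{2n}, \qquad K_n := S(U(n)\times U(n)).
\]

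The decisive step is then a pure symmetric-space computation: $Y_n$ is a Hermitian symmetric space of noncompact type, so by the duality of symmetric spaces the complex of $K_n$-invariant forms on $Y_n$ is isomorphic to the de Rham complex of the compact dual, which is the complex Grassmannian $SU(2n)/K_n \cong \mathrm{Gr}_{\comx}(n, 2n)$. The cohomology of this Grassmannian is concentrated in even degrees, so every $K_n$-invariant closed form of odd degree on $Y_n$ is cohomologically trivial; in particular the restriction of $r^{\rm Bor}_p$ (in odd degree $2p-1$) vanishes in $H^{2p-1}_{\rm cts}(SU(n,n), \R)$. Passing to the limit in $n$ and invoking compatibility with the plus-construction then yields the desired vanishing in $H^{2p-1}(BSO_{\infty,\infty}(\comx)^+, \R)$. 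The main obstacle I expect is the stabilization bookkeeping: verifying that $r^{\rm Bor}_p$ is genuinely captured by the continuous-cohomology class in the stable range, and that $F$ on the plus-constructed spaces is compatible, on continuous cohomology, with the Lie-theoretic restriction described above. Both are consequences of Borel's regulator theorem together with naturality of van Est, but making them precise is essential so that the symmetric-space vanishing can be transferred to the cohomological vanishing on $BSO_{\infty,\infty}(\comx)^+$ itself.
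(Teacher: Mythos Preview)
Your proposal is correct and is essentially the same argument the paper invokes: both reduce to finite stages $SU(n,n)\hookrightarrow SL_{2n}(\comx)$ and then appeal to Reznikov's observation that the continuous cohomology of $SU(p,q)$ in odd degree vanishes because the compact dual $SU(p+q)/S(U(p)\times U(q))$ has cohomology only in even degrees. The paper compresses all of this into a one-line citation of \cite{Re2} (phrased as ``there are no $S(U(p)\times U(q))$-invariant polynomials on $SU(p,q)$''), whereas you have unpacked the van Est identification and the Hermitian symmetric space duality explicitly; the content is the same.
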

\begin{proof}
It suffices to show this for any finite stage $SU(p,q) \rightarrow SL_r( \comx )$. 
Then, Reznikov's argument, basically by observing that there are no $S(U(p)\times U(q))$-invariant
polynomials on $SU(p,q)$, gives the statement \cite{Re2}. 
\end{proof}

Since $BSL(\comx )^+\rightarrow BSL(\comx )_{\rm def}^+$ induces an isomorphism on rational homology,
the volume invariant extends to an invariant denoted also $r^{\rm Bor}_p$ on $BSL(\comx )_{\rm def}^+$.

\begin{corollary}\label{zerohomology}
For any $p>1$ the pullback of $r^{\rm Bor}_p$ via the map 
$BSO_{\infty , \infty}(\comx )_{\rm def}^+ \stackrel{F}{\rightarrow}
BSL(\comx )_{\rm def}^+$ is zero. 
\end{corollary}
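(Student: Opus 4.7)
The plan is to reduce the statement to the previously-established vanishing on $BSO_{\infty,\infty}(\comx)^+$ by a diagram-chase, exploiting the fact that both the hermitian and the linear ``plus'' spaces inject into their deformation counterparts by maps which are isomorphisms on cohomology with $\R$ coefficients.

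First I would assemble the commutative square of classifying spaces
$$
\begin{array}{ccc}
BSO_{\infty,\infty}(\comx)^+ & \stackrel{F}{\lrar} & BSL(\comx)^+ \\
\downarrow i_{SO} & & \downarrow i_{SL} \\
BSO_{\infty,\infty}(\comx)^+_{\rm def} & \stackrel{F_{\rm def}}{\lrar} & BSL(\comx)^+_{\rm def}
\end{array}
$$
where the vertical maps are the canonical inclusions of each plus-construction into its deformation pushout, and the commutativity is (a special case of) the compatibility of homotopy pushout squares noted in Lemma \ref{classifyingmap}. By the deformation theorem of Quillen, the right vertical map $i_{SL}$ induces an isomorphism on rational cohomology; by Corollary \ref{hermitianhomotopy} (applied to the subgroup $SO$; the argument via Karoubi's $L$-theory homotopy invariance goes through after inverting $2$, which is enough for $\R$-coefficients), the left vertical map $i_{SO}$ also induces an isomorphism on cohomology with $\R$-coefficients.

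Next I would use these isomorphisms to transport the vanishing downwards. By construction, $r^{\rm Bor}_p \in H^{2p-1}(BSL(\comx)^+_{\rm def},\R)$ is the unique class which restricts via $i_{SL}^*$ to the Borel volume regulator on $BSL(\comx)^+$. Applying $i_{SO}^*$ to $F_{\rm def}^*(r^{\rm Bor}_p)$ and using commutativity of the square yields
$$
i_{SO}^*\bigl(F_{\rm def}^*(r^{\rm Bor}_p)\bigr) \;=\; F^*\bigl(i_{SL}^*(r^{\rm Bor}_p)\bigr) \;=\; F^*(r^{\rm Bor}_p),
$$
and the right-hand side is zero by the previous lemma (Reznikov's invariant-polynomial calculation on $S(U(p)\times U(q))\subset SU(p,q)$). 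Since $i_{SO}^*$ is an isomorphism on $H^{2p-1}(-,\R)$, this forces $F_{\rm def}^*(r^{\rm Bor}_p) = 0$, which is the desired statement.

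There is no real obstacle here; the whole content has been bundled into the previous lemma and into the homotopy-invariance results for $K$-theory and hermitian $K$-theory. The only point requiring some care is making sure the hermitian homotopy invariance, which a priori holds only after inverting $2$, is sufficient---and it is, because we are working with $\R$-coefficient cohomology where $2$ is already invertible, so $i_{SO}^*$ is genuinely an isomorphism in the range we need.
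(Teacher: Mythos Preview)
Your proof is correct and is exactly the argument the paper intends: the one-line proof in the paper (``the map is the same as in the previous lemma, on rational cohomology'') is shorthand for precisely the diagram chase you wrote out, using that $i_{SL}^*$ and $i_{SO}^*$ are isomorphisms on real cohomology to transport the vanishing from the previous lemma down to the deformation spaces. You have simply unpacked the details; nothing more is needed.
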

\begin{proof}
The map is the same as in the previous lemma, on rational cohomology. 
\end{proof}

\begin{corollary}\label{volumehermtrip}
Given an hermitian deformation triple, the associated volume invariant $Vol^{\rm def}_{2p-1}(\eta _U, \eta _B, \eta _{B^{\ast}})$ 
is zero for any $p>1$. 
\end{corollary}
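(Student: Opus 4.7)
The plan is to unravel the definition of $Vol^{\rm def}_{2p-1}$ as a pullback from a universal space and then invoke the vanishing already proved in Corollary \ref{zerohomology}. First, by construction $\widehat{c}_p^{\rm def}(\eta_U, \eta_B, \eta_{B^{\ast}}) \in H^{2p-1}(X,\comx/\Z)$ is the pullback of the universal regulator class in $H^{2p-1}(BGL(\comx), \comx/\Z)$ under the classifying map $X \to BGL(\comx)^+_{\rm def}$ associated with the underlying linear deformation triple obtained by forgetting the hermitian structure. Taking imaginary parts, $Vol^{\rm def}_{2p-1}(\eta_U, \eta_B, \eta_{B^{\ast}})$ is the pullback of the Borel volume regulator $r^{\rm Bor}_p$, extended to $BGL(\comx)^+_{\rm def}$ via the rational homology equivalence $BGL(\comx)^+ \to BGL(\comx)^+_{\rm def}$.

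Second, I would factor this classifying map through hermitian $K$-theory. Lemma \ref{classifyingmap} supplies a commutative diagram
$$
X \lrar BO_{\infty,\infty}(\comx)^+_{\rm def} \stackrel{F}{\lrar} BGL(\comx)^+_{\rm def},
$$
so that $Vol^{\rm def}_{2p-1}(\eta_U, \eta_B, \eta_{B^{\ast}})$ equals the pullback of $F^{\ast}(r^{\rm Bor}_p)$ via the first map. To match the hypothesis of Corollary \ref{zerohomology}, I would reduce to the special case. For $p \geq 2$ the Borel regulator $r^{\rm Bor}_p$ is, up to rational equivalence, the pullback of a class on $BSL(\comx)^+$: by Borel's computation of the rational $K$-theory of $\comx$, the relevant rational cohomology of $BGL(\comx)^+$ in degrees $\geq 3$ is detected on $BSL(\comx)^+$, the determinant character $GL \rightarrow \comx^{\ast}$ contributing only in degree $1$. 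Similarly, splitting off the determinant of the deformation $\eta_{B^\ast}$ (which lands in $\comx[t]^{\ast} = \comx^{\ast}$ since $\comx[t]$ has no nontrivial units) allows one to replace the hermitian deformation triple by a deformation triple valued in $SU(p,q)$ and $SO_{p,q}(\comx[t])$ up to classes that are irrelevant in degree $2p-1 \geq 3$. This produces a refined commutative diagram
$$
X \lrar BSO_{\infty,\infty}(\comx)^+_{\rm def} \stackrel{F}{\lrar} BSL(\comx)^+_{\rm def}
$$
through which the pullback of $r^{\rm Bor}_p$ likewise factors.

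Finally, Corollary \ref{zerohomology} says exactly that $F^{\ast}(r^{\rm Bor}_p) = 0$ on $BSO_{\infty,\infty}(\comx)^+_{\rm def}$ for $p>1$. Pulling back further to $X$ gives $Vol^{\rm def}_{2p-1}(\eta_U, \eta_B, \eta_{B^{\ast}}) = 0$, as desired. The main obstacle I expect is the reduction from the general groups $GL$, $U(p,q)$ and $O_{p,q}(\comx[t])$ to their special counterparts $SL$, $SU(p,q)$, $SO_{p,q}(\comx[t])$: one must verify that splitting off the determinant character of $\eta_{B^{\ast}}$ is compatible with the deformation patching construction, so that the resulting classifying map to $BSL(\comx)^+_{\rm def}$ genuinely factors through $BSO_{\infty,\infty}(\comx)^+_{\rm def}$, and that the determinant contribution is confined to $H^1$ and hence invisible in degree $2p-1 \geq 3$.
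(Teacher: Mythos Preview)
Your proposal is correct and follows essentially the same route as the paper: identify $Vol^{\rm def}_{2p-1}$ as the pullback of $r^{\rm Bor}_p$ along $X\to BGL(\comx)^+_{\rm def}$, factor this map through $BO_{\infty,\infty}(\comx)^+_{\rm def}$ via Lemma~\ref{classifyingmap}, reduce to the special groups, and apply Corollary~\ref{zerohomology}. The paper handles the reduction to $SL$ and $SO_{\infty,\infty}$ simply by citing Reznikov \cite[p.377, \S 2.7]{Re2}, whereas you spell out more of the determinant-splitting argument; your caution about that step is reasonable but not a genuine gap.
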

\begin{proof}
Recall that $Vol_{2p-1}(\eta _U, \eta _B, \eta _{B^{\ast}})$  is,
by definition, the pullback of $r^{\rm Bor}_p$ via the map $X\rightarrow BGL(\comx )^+_{\rm def}$ obtained
by deformation patching. This map is shown
in Lemma \ref{classifyingmap} to factor through $BO_{\infty , \infty}(\comx )_{\rm def}^+$. 
Then apply Corollary \ref{zerohomology}, using the reduction to $SL$ and $SO_{\infty , \infty}$ mentioned above,
from \cite[p.377, \S 2.7]{Re2}. 
\end{proof}

\subsection{An hermitian deformation triple associated to a VHS}

Consider a representation $\rho$ underlying 
a complex variation of Hodge structure, with unipotent monodromy along an irreducible smooth divisor $D$. 
In this case there is a VMHS $(V,W,F, \tilde{F},\langle \cdot , \cdot \rangle )$ on the divisor component.  
We don't need to know about the Hodge filtrations $F$ and $\tilde{F}$. 
The basic information we need to know about the weight filtration and the hermitian form 
is what is given by the $1$-variable nilpotent and SL2--orbit theorems (see \cite{Sch}). 
Look at
the data $(V,N, \langle \cdot , \cdot \rangle )$ where $V$ is the vector space, $N = \log \rho (\gamma )$ is the 
logarithm of the
monodromy around the divisor $D$, and $\langle \cdot , \cdot \rangle $ is the
flat indefinite hermitian form preserved by $\rho$.
We normalize to suppose that $\langle \cdot , \cdot \rangle $ is hermitian symmetric, rather than hermitian
antisymmetric; by multiplying by $i = \sqrt{-1}$ we can always assume this.

The one-variable nilpotent and SL$2$--orbit theorems imply 
that this triple $(V,N, \langle \cdot , \cdot \rangle )$ is a direct sum of
standard objects. The standard objects are
symmetric powers of the standard $2$--dimensional case where $V$ has basis $e_1,
e_2$, with $Ne_1 = e_2$ and $N e_2 = 0$;
and with $\langle e_i,e_i \rangle  = 0$ but $\langle  e_1,e_2 \rangle  = 1$.   

For the standard object of rank $2$, the monodromy
weight filtration has graded quotients
$Gr_1$ corresponding to $e_1$ and $Gr_{-1}$ corresponding to $e_2$, and the form 
$$
(u,v)\mapsto \langle u, Nv \rangle 
$$ 
is positive definite on $Gr_1$.
The zeroth symmetric power is just the case $N=0$. The $k$-th symmetric power of the
standard object has
basis vectors $e_0,...,e_k$ with $Ne_i = e_{i+1}$ and $\langle e_i,e_j \rangle  = 0$ unless $i+j=k$
in which case it is $1$.
In this case the monodromy weight filtration
puts $e_i$ in degree $k - 2i$,
going from $e_0$ in degree $k$ to $e_k$ in degree $-k$.

So in general our $V$ will be a direct sum of these kinds of things, and the
full monodromy representation of the
neighborhood of $D$ will preserve the monodromy weight filtration. 
Each of the standard objects comes with a splitting of the monodromy weight filtration,
so taking the direct sum of these splittings 
allows us to choose an isomorphism $V \cong Gr^W(V)$ or equivalently an expression
$$
V =\bigoplus V_k
$$
with $V_k$ corresponding
to the $Gr^W_k(V)$. Then $N: V_k \rightarrow V_{k-2}$, and this polarizes the hermitian form
induced by $\langle \cdot , \cdot \rangle$ on $Gr^W_k(V)$ as in \cite{Sch}.
The splittings of the standard objects relate the form $\langle \cdot , \cdot \rangle $  on the original 
vector space with
the induced form on the associated-graded pairing $Gr^W_k$ with $Gr^W_{-k}$, so the same 
is true of our splitting of $V$: the form $\langle \cdot , \cdot \rangle $ on $V$
is the same as the induced form on $Gr^W(V)$, and in terms of the decomposition of $V$ it pairs $V_k$ with
$V_{-k}$.
The full monodromy representation is upper triangular for this block
decomposition.

\begin{proposition}\label{hermitiantriple}
With notations as above,
given a VHS on $U$ with unipotent monodromy around $D$, we can construct an hermitian deformation triple on $X$.
\end{proposition}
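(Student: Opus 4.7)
The plan is to promote the filtration patching datum $(\rho, W)$, where $W$ is the monodromy weight filtration along $D$, into an hermitian deformation triple by using a splitting of $W$ supplied by the one-variable nilpotent and SL$_2$-orbit theorems. Write $V = \bigoplus_k V_k$ for such a splitting, so that (as summarized just above the proposition statement) the flat hermitian form $\langle \cdot, \cdot \rangle$ on $V$ agrees with the induced form on $Gr^W(V)$ and pairs $V_k$ nondegenerately with $V_{-k}$. Let $(p,q)$ denote the signature of $\langle \cdot, \cdot \rangle$; this signature does not change when passing to $Gr^W(V)$ under our splitting, so both $V$ and $Gr^W(V)$ are isometric and give the same target group $U(p,q)$.

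Define $\eta_U := \rho : \pi_1(U) \to U(p,q)$. Since $N = \log \rho(\gamma)$ is strictly upper triangular for $W$, the associated-graded representation $Gr^W(\rho)|_{\pi_1(B^*)}$ sends $\gamma$ to the identity, and hence factors uniquely through the surjection $\pi_1(B^*) \twoheadrightarrow \pi_1(B)$ to yield $\eta_B : \pi_1(B) \to U(p,q)$. For $\eta_{B^*}$, set
\[
\eta_{B^*}(g) := \psi_t(\rho(g)), \qquad g \in \pi_1(B^*),
\]
where $\psi_t$ is the block-rescaling operator on $End_W(V)$ from \S\ref{deformation-filtration}. Multiplicativity $\psi_t(MM') = \psi_t(M)\psi_t(M')$, already observed there, makes $\eta_{B^*}$ a homomorphism into the group of $W$-preserving $\comx[t]$-linear automorphisms of $V[t]$, and the endpoint relations $e_1 \eta_{B^*} = \rho|_{B^*}$ and $e_0 \eta_{B^*} = Gr^W(\rho)|_{B^*}$ give exactly the required compatibility $e_1\eta_{B^*} = b^*\eta_U$ and $e_0\eta_{B^*} = c^*\eta_B$.

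The main point, and essentially the only nontrivial verification, is that $\eta_{B^*}$ takes values in the hermitian group $O_{p,q}(\comx[t])$ rather than merely in $GL_r(\comx[t])$. By construction $\psi_t$ acts on $Hom(V_i, V_j)$ by a scalar depending only on the difference $j-i$. The form-adjoint operation, induced by the pairing $V_k \leftrightarrow V_{-k}$, sends the block $Hom(V_i,V_j)$ to the block $Hom(V_{-j}, V_{-i})$; the two receive the same scaling factor under $\psi_t$ because $(-i)-(-j) = j-i$. Hence $\psi_t$ commutes with form-adjunction, i.e.\ $\psi_t(M^{*}) = \psi_t(M)^{*}$. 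For any $M = \rho(g)$ with $g \in \pi_1(B^*)$, which lies in $U(p,q)$ and preserves $W$, one then gets
\[
\psi_t(M)\,\psi_t(M)^{*} \;=\; \psi_t(M)\,\psi_t(M^{*}) \;=\; \psi_t(MM^{*}) \;=\; \psi_t(I) \;=\; I
\]
as an identity in $\comx[t]$, so $\psi_t(\rho(g)) \in O_{p,q}(\comx[t])$. This assembles $(\eta_U, \eta_B, \eta_{B^*})$ into the desired hermitian deformation triple. The hard part is thus the symmetry calculation $\psi_t(M^*) = \psi_t(M)^*$, which is really a structural consequence of the form being a direct sum of the standard SL$_2$-orbit pieces and the splitting being compatible with this decomposition; once that is in hand, everything else is formal.
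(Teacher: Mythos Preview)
Your proof is correct and follows essentially the same strategy as the paper's: both take $\eta_U=\rho$, $\eta_B=Gr^W(\rho)$ via the SL$_2$-orbit splitting $V=\bigoplus_k V_k$, and $\eta_{B^\ast}$ the block-rescaling deformation, which is precisely the conjugation $Ad(T_t)$ used in the paper (with $T_t$ acting by $t^k$ on $V_k$). The one genuine difference is in how the hermitian property of $\eta_{B^\ast}$ is checked. The paper observes that $T_t$ itself preserves $\langle\cdot,\cdot\rangle$ (since $V_k$ pairs with $V_{-k}$ and $t^k\cdot t^{-k}=1$), so $Ad(T_t)(\rho(g))\in U(p,q)$ for real $t\neq 0$, and then invokes continuity at $t=0$. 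You instead verify directly that $\psi_t$ commutes with the form-adjoint because the block $Hom(V_i,V_j)$ and its adjoint block $Hom(V_{-j},V_{-i})$ receive the same scaling $t^{j-i}$; combined with multiplicativity this gives $\psi_t(M)\psi_t(M)^\ast=\psi_t(MM^\ast)=I$ as an identity in $\comx[t]$. Your version is slightly cleaner in that it works uniformly over $\comx[t]$ (using $\overline{t}=t$) and needs no separate continuity argument, and it also yields $\eta_B\in U(p,q)$ automatically by specializing at $t=0$, whereas the paper argues this point independently. But the underlying mechanism is the same.
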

\begin{proof}
The representation $\eta _U$ is given by $\rho$, and $\eta _B$ is given by the associated-graded $Gr^W(\rho )$
transported to a representation on $V$ by the splitting. Note that $\eta _B$ still takes values in the 
unitary group $U(p,q)$ of $\langle \cdot , \cdot \rangle$ on $V$. Indeed, $\eta _B$ is a direct sum of 
representations which preserve the form on $Gr^W_k$ obtained by the polarization using $N$. On anything of
the form $Gr^W_k\oplus Gr^W_{-k}$ the form $\langle \cdot , \cdot \rangle$ is of hyperbolic type using
the polarization forms on the two pieces, so $\eta _B$ preserves the form $\langle \cdot , \cdot \rangle$
on each piece $Gr^W_k\oplus Gr^W_{-k}$.

Define the deformation
$\eta_{B^{\ast}}$ as follows (this is basically the same as in Proposition \ref{trivext} and  \S \ref{deformation-filtration} 
above):
for $t\in  \R$, let $T_t$ be the automorphism of $V$ which acts by multiplication
by $t^k$ on $V_k$.
Then conjugation with $T_t$ gives an action on $GL(V)$ which multiplies the
block diagonal pieces by $1$ and the
strictly upper triangular pieces by some positive powers of $t$. Thus, on an
upper triangular monodromy representation
$\rho |_{\pi _1(B^{\ast})}$
it extends to the case $t=0$ giving a family of representations $\eta _{B^{\ast}}:=Ad(T_t)(\rho|_{\pi _1(B^{\ast})})$
defined even for $t=0$
and at $t=0$ the image is the associated-graded representation $\eta _B$. This gives the required
deformation. Notice that $T_t$ preserves the form $\langle \cdot , \cdot \rangle $, because if
$v_i \in V_i$ and $v_j \in V_j$
then $\langle  v_i , v_j  \rangle  = 0$ unless $i+j=0$, and if $i+j=0$ then 
$$
\langle T_tv_i, T_tv_j \rangle  = \langle t^i v_i , t^{-i}v_j  \rangle  = \langle  v_i , v_j  \rangle .
$$
Thus the conjugated representations $Ad(T_t)(\rho |_{\pi _1(B^{\ast})})$ are all in the unitary
group of $(V,\langle \cdot  , \cdot \rangle )$ (even for $t=0$ by continuity). Hence $\eta _{B^{\ast}}$ takes values in 
$O_{p,q}(\comx [t])$. This completes the construction of the hermitian deformation triple. 
\end{proof}

In this situation $X$ is identified with the homotopy pushout $U_0\cup ^{S\times [0,1]}B_0$, and we will 
obtain a map 
$$
f_\rho:X\rightarrow BO_{\infty , \infty}(\comx )_{\rm def} ^+.
$$

\begin{corollary}
\label{volumeVHSvanishes}
The pullback of the volume regulator $r^{\rm Bor}_p$ by the map $F\circ f_\rho$
is the same as $Vol_{2p-1}(\rho /X)$ and it vanishes. 
\end{corollary}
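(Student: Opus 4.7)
The plan is to split the statement into two parts: the identification $(F\circ f_\rho)^{\ast}(r^{\rm Bor}_p) = Vol_{2p-1}(\rho/X)$, and the vanishing of this class. The vanishing is essentially already proved in Corollary \ref{volumehermtrip}, so the main content is the identification, which tracks through the comparison machinery of \S\ref{kclassdef-sec} and \S\ref{hermitian K-theory}.

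For the identification, I would unpack the construction of the hermitian deformation triple given by Proposition \ref{hermitiantriple}: $\eta_U = \rho$, $\eta_B = Gr^W(\rho)$ transported to $V$ via the SL$_2$-orbit splitting, and $\eta_{B^{\ast}} = \mathrm{Ad}(T_t)(\rho|_{\pi_1(B^{\ast})})$, where $W$ is the monodromy weight filtration of $N = \log \rho(\gamma)$ and $T_t$ acts on $V_k$ by scaling by $t^k$. By Lemma \ref{classifyingmap}, composing $f_\rho$ with the forgetful map $F$ gives a classifying map $X \to BGL(\comx)^+_{\rm def}$ homotopic to the one attached to the underlying ordinary deformation triple. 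A direct block computation shows that $\mathrm{Ad}(T_t)$ multiplies $\mathrm{Hom}(V_i, V_j)$ by $t^{j-i}$, hence agrees on the nose with the deformation $\psi_t$ of \S\ref{deformation-filtration} built from the filtration patching datum $(\rho, W)$. Therefore Corollary \ref{patchingsame} applies and the pullback of the universal $\comx/\Z$-regulator along $F \circ f_\rho$ equals the patched-connection class $\widehat{c}_p(\rho/X)$; taking imaginary parts yields $(F\circ f_\rho)^{\ast}(r^{\rm Bor}_p) = Vol_{2p-1}(\rho/X)$.

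With the identification in hand, the vanishing is immediate from Corollary \ref{volumehermtrip} applied to the hermitian deformation triple of Proposition \ref{hermitiantriple}. Equivalently, after the reduction to $SL_r$-valued representations from \cite[\S 2.7]{Re2}, the map $f_\rho$ lifts through $BSO_{\infty,\infty}(\comx)^+_{\rm def}$, and Corollary \ref{zerohomology} gives the vanishing of the pullback directly. The only step that demands any real care is checking that the two deformations $\mathrm{Ad}(T_t)$ and $\psi_t$ produce the same classifying map into $BGL(\comx)^+_{\rm def}$, but as one-parameter families of representations they coincide term by term, so nothing further is needed beyond this bookkeeping.
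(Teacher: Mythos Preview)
Your proposal is correct and follows essentially the same route as the paper: identify the underlying ordinary deformation triple of the hermitian one from Proposition~\ref{hermitiantriple} with the filtration-patching deformation of \S\ref{deformation-filtration} (the paper phrases this as ``complexifies to the same one''), invoke Lemma~\ref{classifyingmap} and Corollary~\ref{patchingsame} for the identification $(F\circ f_\rho)^\ast(r^{\rm Bor}_p)=Vol_{2p-1}(\rho/X)$, and then conclude vanishing via Corollary~\ref{volumehermtrip} (equivalently Corollary~\ref{zerohomology} after the $SL$ reduction). Your explicit verification that $\mathrm{Ad}(T_t)$ and $\psi_t$ act identically on the block pieces is exactly the bookkeeping the paper alludes to but does not spell out.
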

\begin{proof}
Consider the hermitian deformation triple obtained in Proposition \ref{hermitiantriple}. This gives the map $f_{\rho}$.
The associated deformation patching datum is the same as the one used to define $\widehat{c}_p^{\rm def}(\rho ,W)$,
because the construction in the proof of Proposition \ref{hermitiantriple} complexifies to the same one as in
\S \ref{deformation-filtration}. Therefore
$(F\circ f_{\rho})^{\ast}(r^{\rm Bor}_p) = Vol _{2p-1}(\rho /X)$. 
As in Corollary \ref{volumehermtrip}, 
the proof now follows from Lemma \ref{classifyingmap} and Corollary \ref{zerohomology}.
\end{proof}

\section{The generalization of Reznikov's theorem}
\label{genrezn}

We can now give the generalization of Reznikov's theorem for canonical extensions in the
case of a smooth divisor. 

\begin{theorem}
\label{extendedReznikov}
Suppose $X$ is a smooth quasiprojective variety, with $D\subset X$ an irreducible
closed smooth divisor. Suppose $\rho : \pi _1(X-D)\rightarrow GL_r(\comx )$
is a representation such that $\rho (\gamma )$ is unipotent for $\gamma $ the loop going around 
$D$. Then the extended regulator 
$$
\widehat{c}_p(\rho /X ) \in H^{2p-1}(X,\comx / \Z )
$$
defined using the patched connection in Corollary \ref{patchedclass}, is torsion.
\end{theorem}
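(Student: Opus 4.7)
The plan is to follow Reznikov's original strategy very closely, now that all the individual ingredients have been set up for the canonical extension in Sections \ref{patchedconnection}--\ref{hermitian K-theory}. First, I would reduce to the case where $\rho$ factors as $\rho : \pi_1(X-D) \to GL_r(F)$ for a finitely generated subfield $F\subset\comx$: the representation is defined over a finitely generated ring, and passing to its fraction field (enlarging if necessary to capture matrix entries of a chosen splitting of the weight filtration) one may assume values in $GL_r(F)$. Choosing a graded-extendable filtration $W$ on $B^{\ast}$ from the weight filtration, the construction of \S\ref{deformation-filtration} produces a deformation patching datum $(\eta_U,\eta_B,\eta_{B^{\ast}})$ over $F$, hence a homotopy class of map $X\to BGL(F)^+_{\rm def}$. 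By Corollary \ref{patchingsame}, the pullback of the universal regulator along this map agrees with $\what{c}_p(\rho/X)$, so it is enough to show that this pullback is torsion in $H^{2p-1}(X,\comx/\Z)$.

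Next, I would use Mochizuki's theorem together with rigidity (Proposition \ref{rigidity}) to reduce to the case where $\rho$ underlies a complex variation of Hodge structure. More precisely, Mochizuki produces a continuous family $\rho(t)$ of representations with fixed unipotent monodromy around $D$ joining $\rho=\rho(0)$ to a representation $\rho(1)$ that underlies a polarized complex VHS. Proposition \ref{rigidity} then gives $\what{c}_p(\rho/X)=\what{c}_p(\rho(1)/X)$. For $\rho(1)$, Proposition \ref{hermitiantriple} provides an hermitian deformation triple whose underlying complex deformation triple is exactly the one arising from $(\rho(1),W)$, and Corollary \ref{volumeVHSvanishes} then yields that the Borel volume class $\mathrm{Vol}_{2p-1}(\rho(1)/X)=\mathrm{Im}\,\what{c}_p(\rho(1)/X)$ vanishes. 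Since this applies to every complex embedding of the field of definition of $\rho(1)$, the corresponding volume regulators all vanish.

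To conclude that $\what{c}_p(\rho(1)/X)$ itself is torsion, I would invoke Borel's theorem on the rational $K$-theory of a number field (extended to finitely generated fields of characteristic zero via Suslin rigidity and a transcendence-basis argument, exactly as in \cite{Re2}). This says that $H^{\ast}(BGL(F)^+,\Q)$ is detected, in the relevant degrees $2p-1$ with $p>1$, by the Borel volume regulators attached to the various complex embeddings of $F$; consequently, the class in $H^{2p-1}(BGL(F)^+,\comx/\Z)$ corresponding to the universal regulator is torsion-dual to these real classes in the sense that its pullback to any space by a map into $BGL(F)^+$ is torsion as soon as all Borel volume pullbacks vanish. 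Since $BGL(F)^+\to BGL(F)^+_{\rm def}$ is a rational equivalence by the deformation theorem, the same holds for $BGL(F)^+_{\rm def}$, and we obtain that $\what{c}_p(\rho/X)$ is torsion in $H^{2p-1}(X,\comx/\Z)$, which is exactly the statement of the theorem.

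The main obstacle I expect is the bookkeeping in the first paragraph: one must make sure that the reduction to a finitely generated field $F$ is compatible with the graded-extendable filtration, and that the Mochizuki deformation truly preserves the unipotent-monodromy hypothesis so that Proposition \ref{rigidity} applies at every stage. Everything else in the argument is an almost mechanical transport of Reznikov's proof to the extended setting, made legitimate by Corollary \ref{patchingsame} (which identifies the patched-connection class with the $K$-theoretic deformation class) and by Corollary \ref{volumeVHSvanishes} (which supplies the VHS vanishing); once these are in hand, the Borel/Suslin step is formally identical to the one in \cite{Re2}.
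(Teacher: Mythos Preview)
Your strategy and ingredients match the paper's, but the order of operations in your second paragraph creates a genuine gap. After deforming $\rho$ to a VHS $\rho(1)$, you claim that Corollary \ref{volumeVHSvanishes} ``applies to every complex embedding of the field of definition of $\rho(1)$.'' It does not: for a nontrivial embedding $\sigma$, the conjugate $\rho(1)^\sigma$ is in general \emph{not} a complex VHS, so Proposition \ref{hermitiantriple} gives no hermitian deformation triple for it and the volume vanishing is not available directly. You also switch, in the third paragraph, from the field of $\rho(1)$ back to the original $F$; but the deformation $\rho \leadsto \rho(1)$ has already severed the link to the map $X\to BGL(F)^+_{\rm def}$ you set up in paragraph one, so it is no longer clear which Borel classes you are pulling back along which map.

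The paper avoids this tangle by reversing the two steps. First, using rigidity (Proposition \ref{rigidity}) and the affine structure of the representation variety with prescribed unipotent monodromy, it specializes $\rho$ to a point defined over an \emph{algebraic number field} $F$, so that Borel's theorem applies directly without any Suslin-type extension to finitely generated fields. Only then, for \emph{each} embedding $\sigma:F\hookrightarrow\comx$, does it apply Mochizuki's theorem to deform $\rho^\sigma$ to a VHS and invoke rigidity plus Corollary \ref{volumeVHSvanishes} to obtain $Vol_{2p-1}(\rho^\sigma/X)=0$. Now all Borel volume classes for the single fixed map $\xi_\rho:X\to BGL(F)^+_{\rm def}$ vanish, Borel's theorem forces $\xi_\rho$ to be zero on rational homology, and hence the pullback of the universal $\comx/\Z$ class is torsion; Corollary \ref{patchingsame} then identifies this with $\widehat{c}_p(\rho/X)$. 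The paper also inserts the standard reduction from $GL_r$ to $SL_r$ via an $r$-fold cover of $(X,D)$, which you omit but which is needed for the statement that the $\sigma^\ast(Vol_{2p-1})$ generate the real cohomology.
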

\begin{proof}
By the rigidity result \ref{rigidity}, the regulator doesn't change if we deform $\rho$. 
Thus, we may assume that $\rho$ is defined over an algebraic number field $F$.
The regulator is a pullback of a class via the map
$$
\xi _{\rho}: X\rightarrow BGL(F)^+_{\rm def}.
$$
Suppose $\sigma : F\hookrightarrow \comx$ is any embedding. Composing, we get a map
$$
X\rightarrow BGL(\comx )^+_{\rm def}.
$$
The pullback of the volume regulator by this map, is a class $Vol_{2p-1}(\rho ^{\sigma})\in H^{2p-1}(X,\R )$.
This class is independent of deformations of $\rho ^{\sigma}$ within representations which are unipotent along $D$,
by Theorem \ref{rigidity}. As in \cite[p.377, \S 2.7]{Re2}, it suffices to consider the case when the 
representation takes values in $SL_r(\comx)$. Indeed, one can take a $r$-fold covering $(Y,D_Y)\rar (X,D)$ 
such that the pullback of the canonical extension is the canonical extension of a tensor product of a flat 
line bundle and a unimodular flat bundle. The additivity of volume regulators and injectivity of the 
cohomology $H^j(X,\R)\rar H^j(Y,\R)$ and
the reasoning in {\em loc.\ cit} says that it suffices to prove the theorem for $SL_r$-valued representations. 
Also note that the constructions in \S \ref{kclassdef-sec} and \S \ref{hermitian K-theory} 
hold verbatim if we look at the special linear subgroups.

Mochizuki proves in \cite{Mochizuki} that $\rho ^{\sigma}$ may be deformed to a complex
variation of Hodge structure. When $X$ is smooth and projective with a smooth divisor $D$,
this can also be obtained using Biquard's earlier version of the theory for this case \cite{Bi}. 
This deformation preserves the condition of unipotence at infinity, since it
preserves the trivial parabolic structure of the Higgs bundle, and the Higgs field is multiplied by 
$t\rightarrow 0$ so if the eigenvalues are zero to begin with, then they are zero in the deformation.
On the other hand, by Corollary \ref{volumeVHSvanishes}, the extended
volume regulator vanishes for a complex variation of Hodge structure. Thus, $Vol_{2p-1}(\rho ^{\sigma})= 0$.
We now apply Reznikov's argument: by Borel's theorem, the classes $\sigma ^{\ast}(Vol _{2p-1})$ generate
the real cohomology ring of  $BSL(F)^+$ or equivalently $BSL(F)^+_{\rm def}$. The fact that their pullbacks by
$\xi _{\rho}$ vanish, implies that $\xi _{\rho}$ induces the zero map on rational homology. This in turn
implies that the pullback by $\xi _{\rho}$ of the universal class in $H^{2p-1}(BSL(F)^+_{\rm def},\comx / \Z )$, 
is torsion. 
By Corollary \ref{patchingsame}, the pullback of this class is the same as the regulator we have defined using 
the patched connection. 
\end{proof}

\begin{corollary}
Suppose $X$ is a smooth projective variety over $\comx$, 
with $D\subset X$ an irreducible
closed smooth divisor. 
Suppose $\rho : \pi _1(X-D)\rightarrow GL_r(\comx )$
is a representation such that $\rho (\gamma )$ is unipotent for $\gamma $ the loop going around
$D$. Let $(E,\nabla )$ be the holomorphic bundle with flat connection on $X-D$ associated to $\rho$,
and let $E_X$ be the Deligne canonical extension to a holomorphic bundle on $X$ with logarithmic connection 
having nilpotent residue along $D$. Then the Deligne Chern class 
$$
c^{\cD}_p(E_X) \in H^{2p}_{\cD}(X, \Z (p))
$$
is torsion.
\end{corollary}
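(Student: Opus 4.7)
The plan is to read off this corollary as a direct consequence of the two main results already established in the paper: the torsion statement for the extended regulator (Theorem \ref{extendedReznikov}) and its compatibility with the Deligne Chern class in the projective case (Proposition \ref{gluedsecondary}).

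First, I would check that the hypotheses of Theorem \ref{extendedReznikov} apply: here $X$ is smooth projective (in particular quasiprojective), $D\subset X$ is a smooth irreducible divisor, and $\rho$ is unipotent around $D$. Theorem \ref{extendedReznikov} then yields that the extended regulator
\[
\widehat{c}_p(\rho/X)\in H^{2p-1}(X,\comx/\Z)
\]
is torsion for every $p>1$.

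Next, invoking the compatibility with the Deligne Chern class: since $X$ is projective, Proposition \ref{gluedsecondary} (whose proof uses the Dupont--Hain--Zucker argument carried through in \S\ref{delignecompatibility}) tells us that under the natural projection
\[
H^{2p-1}(X,\comx/\Z)\lrar H^{2p}_{\cD}(X,\Z(p)),
\]
the class $\widehat{c}_p(\rho/X)$ maps precisely to the Deligne Chern class $c^{\cD}_p(E_X)$ of the Deligne canonical extension. Since this projection is a group homomorphism and the preimage is torsion, the image $c^{\cD}_p(E_X)$ is also torsion in $H^{2p}_{\cD}(X,\Z(p))$, which is the desired conclusion.

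There is essentially no obstacle remaining at this stage; the corollary is a formal consequence, the real work having been done in Theorem \ref{extendedReznikov} (rigidity plus the Mochizuki deformation to a VHS plus the hermitian $K$-theory vanishing of volume regulators) and in Proposition \ref{gluedsecondary} (the Dupont--Hain--Zucker lifting argument applied to the patched connection). The only minor point worth noting is that the statement is for $p>1$ even though the corollary as written does not restrict $p$: for $p=1$ the class $c^{\cD}_1(E_X)$ need not be torsion, reflecting that the determinant line bundle of $E_X$ can have nontrivial degree, so the intended range is $p\ge 2$, matching the range in Theorem \ref{extendedReznikov}.
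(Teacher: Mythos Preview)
Your proposal is correct and follows exactly the paper's own proof: invoke Proposition \ref{gluedsecondary} to see that $\widehat{c}_p(\rho/X)$ lifts the Deligne Chern class, then apply Theorem \ref{extendedReznikov} to conclude torsion. Your additional remark on the $p=1$ case is a reasonable observation not made explicit in the paper.
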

\begin{proof}
We have shown in Proposition \ref{gluedsecondary} that the regulator class $\widehat{c}_p(\rho )$ lifts the Deligne Chern class of the canonical extension.
Thus, Theorem \ref{extendedReznikov} implies that the Deligne Chern class of the canonical extension is torsion.
\end{proof}

Aside from the obvious problem of generalizing these results to the case of a normal-crossings divisor,
another interesting question is how to generalize Reznikov's other proof of his theorem 
\cite{Re2}. This passed through a direct calculation of Borel's volume invariants using the harmonic
map, instead of invoking deformation to a variation of Hodge structure. It would be interesting to see
how to do this calculation for the volume invariant over $X$,
using the harmonic map associated to $\rho$ on $X-D$. This might lead to a better way of treating the 
normal-crossings case. 

Another circle of questions clearly raised by Reznikov's result is to determine the
torsion pieces of these classes, for example is there some arithmetical construction of these?
Can one bound the torsion or construct coverings on which it vanishes?  


\begin{thebibliography}{AAAAA}


\bibitem[Bi]{Bi}
O. Biquard. {\em Fibr\'es de Higgs et connexions int\'egrables: le cas logarithmique (diviseur lisse).} 
Ann. Sci. Ecole Norm. Sup. (4)  30  (1997),  no. \textbf{1}, 41--96.

\bibitem [Bl]{Bl} S. Bloch, {\em Applications of the dilogarithm function in algebraic K-theory and algebraic geometry}, 
Int.Symp. on Alg.Geom., Kyoto, 1977, 103-114.

\bibitem[Bo]{Borel}
A. Borel. 
{\em Stable real cohomology of arithmetic groups}.
Ann. Sci. Ecole Norm. Sup. \textbf{7} (1974) (1975), 235--272. 

\bibitem[Bo2]{Borel2}
A. Borel.
{\em Stable real cohomology of arithmetic groups. II}. Manifolds and Lie groups (Notre Dame, Ind., 1980),
Progr. Math., \textbf{14}, Birkh\"{a}user, Boston, Mass., (1981), 21--55. 

\bibitem[Br]{Br}
J-L. Brylinski, {\em Comparison of the Beilinson-Chern classes with the Chern-Cheeger-Simons classes}.  
Advances in geometry,  95--105, Progr. Math., \textbf{172}, Birkh\"auser Boston, Boston, MA, 1999. 

\bibitem[Ch-Sm]{Ch-Si}J. Cheeger, J. Simons, {\em
Differential characters and geometric invariants}, Geometry and topology (College Park, Md., 1983/84), 
50--80, Lecture Notes in Math., \textbf{1167}, Springer, Berlin, 1985. 

\bibitem[Chn-Sm]{Chn-Si} S.S. Chern, J. Simons, {\em Characteristic forms and geometric invariants},  Ann. of Math. (2)  \textbf{99}  (1974), 48--69. 

\bibitem [De]{De} P. Deligne,  {\em Equations diff\'erentielles \`a points singuliers
reguliers}. Lect. Notes in Math. $\bf{163}$, 1970.

\bibitem[De2]{De2} P. Deligne, {\em La conjecture de Weil. I.} (French), Inst. Hautes \'Etudes Sci. Publ. Math. No. \textbf{43} (1974), 273--307.                       

\bibitem[De3]{De3} P. Deligne, {\em Letter to J.N. Iyer}, dated 16.11.2006.

\bibitem[De-Su]{De-Su} P. Deligne, D. Sullivan,  {\em Fibr\'es vectoriels complexes \'a groupe structural discret},  
C. R. Acad. Sci. Paris S\'er. A-B  281  (1975), no. \textbf{24}, Ai, A1081--A1083. 

\bibitem[DHZ]{DHZ} J. Dupont, R. Hain, S. Zucker, {\em Regulators and characteristic classes of flat bundles},  
The arithmetic and geometry of algebraic cycles (Banff, AB, 1998),  47--92, CRM Proc. Lecture Notes, \textbf{24}, Amer. Math. Soc., Providence, RI, 2000. 

\bibitem[Es]{Es} H. Esnault, {\em Characteristic classes of flat bundles}, Topology  27  (1988),  no. \textbf{3}, 323--352.

\bibitem[Es2]{Es2} H. Esnault, {\em Recent developments on characteristic classes of flat bundles on complex algebraic manifolds}, 
Jahresber. Deutsch. Math.-Verein.  98  (1996),  no. \textbf{4}, 182--191.

\bibitem[Es-Co]{Es-Co} H. Esnault, K. Corlette, {\em Classes of Local  Systems of Hermitian vector spaces}, appendix to
 J. M. Bismut, {\em Eta invariants, differential characters and flat vector bundles}, Chinese Ann. Math. Ser. B  26  (2005),  no. \textbf{1}, 15--44.

\bibitem [Es-Vi]{Es-Vi} H. Esnault, E. Viehweg,  {\em Logarithmic De Rham complexes
and vanishing theorems}, Invent.Math., $\bf{86}$, 161-194, 1986.

\bibitem [Gi-So]{GilletSoule} H. Gillet, C. Soul\'e, {\em Arithmetic Chow groups and differential characters.}  
Algebraic $K$-theory: connections with geometry and topology (Lake Louise, AB, 1987),  29--68, 
NATO Adv. Sci. Inst. Ser. C Math. Phys. Sci., \textbf{279}, Kluwer Acad. Publ., Dordrecht, 1989. 

\bibitem[Gr-Gri] {Green-Griffiths} M. Green, P. Griffiths, {\em Hodge-theoretic invariants for algebraic cycles.}  
Int. Math. Res. Not.  2003,  no. \textbf{9}, 477--510.

\bibitem[Gri]{Griffiths} P. Griffiths, {\em Some results on algebraic cycles on algebraic manifolds.} 
 1969  Algebraic Geometry (Internat. Colloq., Tata Inst. Fund. Res., Bombay, 1968)  pp. 93--191 Oxford Univ. Press, London.

\bibitem[Gri-Ha]{GriffithsHarris}P. Griffiths, J. Harris, {\em Principles of algebraic geometry.} Reprint of the 1978 original. 
Wiley Classics Library. John Wiley and Sons, Inc., New York, 1994. xiv+813 pp.

\bibitem [Gk]{Grothendieck} A. Grothendieck, {\em Classes de Chern et repr\'esentations lin\'eaires des groupes discrets.} (French) 1968  
Dix Expos\'es sur la Cohomologie des Sch\'emas  p. 215--305 North-Holland, Amsterdam; Masson, Paris. 

\bibitem[Ka]{Karoubi}
M. Karoubi.
P\'eriodicit\'e de la $K$-th\'eorie hermitienne. {\em Algebraic $K$-theory, III: Hermitian $K$-theory and geometric applications 
(Proc. Conf., Battelle Memorial Inst., Seattle, Wash., 1972)},  {\sc Lecture Notes in Math.} {\bf 343} (1973), 301-411.

\bibitem[Ka2]{Ka2} M. Karoubi, {\em Classes caract\'eristiques de fibr\'es feuillet\'es, 
holomorphes ou alg\'ebriques.} 
Proceedings of Conference on Algebraic Geometry and Ring Theory in 
honor of Michael Artin, Part II (Antwerp, 1992).  $K$-Theory  8  (1994),  no. \textbf{2}, 153--211.

\bibitem[Ko]{Ko} S. Kobayashi, {\em Induced connections and imbedded Riemannian spaces},
Nagoya Math. J. \textbf{10} (1956), 15--25.

\bibitem[Mo]{Mochizuki} T. Mochizuki,  {\em Kobayashi-Hitchin correspondence for 
tame harmonic bundles and an application}, 
Ast\'erisque No. \textbf{309} (2006).

Preprint {\tt math.DG/0411300}.

\bibitem[Na-Ra]{Narasimhan} M. S. Narasimhan, S. Ramanan, {\em Existence of universal connections},  Amer. J. Math.  \textbf{83}  1961 563--572. 

\bibitem[Na-Ra2]{Narasimhan2} M. S. Narasimhan, S. Ramanan, {\em Existence of universal connections. II},  Amer. J. Math.  \textbf{85}  1963 223--231.

\bibitem[Qu]{Quillen} D. Quillen. {\em Higher algebraic $K$-theory I}, Algebraic $K$-Theory I (Battelle 1972), 
Lecture Notes in Math. \textbf{341}, Springer-Verlag (1973), 85--147. 

\bibitem [Re]{Re} A. Reznikov, {\em Rationality of secondary classes},  J. Differential Geom.  43  (1996),  no. \textbf{3}, 674--692.

\bibitem [Re2]{Re2} A. Reznikov,  {\em All regulators of flat bundles are torsion}, Ann. of Math. (2) 141 (1995), no. \textbf{2}, 373--386.

\bibitem[Ro]{Rosenberg}
J. Rosenberg. {\em Algebraic $K$-Theory and its Applications}, Graduate Texts in Math. \textbf{147}, Springer-Verlag (1994). 

\bibitem[Sch]{Sch}
W. Schmid. 
{\em Variation of Hodge structure: the singularities of the period mapping.}
Invent. Math. \textbf{22} (1973), 211--319.

\bibitem[Sm-Su]{Simmons} J. Simons, D. Sullivan, {\em Axiomatic Characterization of Ordinary Differential Cohomology}, arXiv math.AT/0701077.

\bibitem[Si]{Si} C. Simpson, {\em Harmonic bundles on noncompact curves.}  J. Amer. Math. Soc.  3  (1990),  no. \textbf{3}, 713--770.

\bibitem[Si2]{Si2} C. Simpson, {\em Higgs bundles and local systems}, Inst. Hautes \'Etudes Sci. Publ. Math.  No. \textbf{75}  (1992), 5--95.

\bibitem[So]{Soule} C. Soul\'e, {\em Connexions et classes caract\'eristiques de Beilinson.} 
 Algebraic $K$-theory and algebraic number theory 
(Honolulu, HI, 1987),  349--376, Contemp. Math., \textbf{83}, Amer. Math. Soc., Providence, RI, 1989.

\bibitem[So2]{Soule2} C. Soul\'e, {\em Classes caract\'eristiques secondaires des fibr\'es plats.} 
S\'eminaire Bourbaki, Vol. 1995/96.  Ast\'erisque  No. 241  (1997), Exp. No. 819, \textbf{5}, 411--424. 

\bibitem[Sr]{Srinivas} V. Srinivas, {\em Algebraic $K$-theory.} Progress in Mathematics, \textbf{90}. Birkh\"auser Boston, Inc., Boston, MA, 1991. xvi+314 pp.

\bibitem[Su]{Sullivan} D. Sullivan, {\em Genetics of homotopy theory and the Adams conjecture.}  Ann. of Math. (2)  \textbf{100}  (1974), 1--79.
 
\bibitem[Ta]{Ta} S. Takizawa, 
{\em On the induced connexions},
Mem. Coll. Sci. Univ. Kyoto. Ser. A. Math. \textbf{30} 1957 105--118. 

\bibitem[Zu]{Zu} S. Zucker, {\em The Cheeger-Simons invariant as a Chern class.}  
Algebraic analysis, geometry, and number theory (Baltimore, MD, 1988),  397--417, Johns Hopkins Univ. Press, Baltimore, MD, 1989.

\end {thebibliography}

\end{document}